\newcommand {\tb}{\textcolor{blue}}
\newcommand{\bbN}{\mathbb{N}}
\newcommand{\bbR}{\mathbb{R}}
\newcommand{\bbC}{\mathbb{C}}
\newcommand{\bbD}{\mathbb{D}}
\newcommand{\Kc}{\mathcal{K}}
\newcommand{\Fc}{\mathcal{F}}
\newcommand{\Ec}{\mathcal{E}}
\newcommand{\Dc}{\mathcal{D}}
\newcommand{\M}{\mathcal{M}}
\newcommand{\Rc}{\mathcal{R}}
\theoremstyle{plain}
\newtheorem{main-theorem}{Theorem}
\newtheorem{theo}{Theorem}[section]
\newtheorem{proposition}[theo]{Proposition}
\newtheorem{lemma}[theo]{Lemma}
\newtheorem{deff}[theo]{Definition}
\newtheorem{exam}[theo]{Example}
\newtheorem{rem}[theo]{Remark}
\newtheorem{que}[theo]{Question}
\newtheorem*{clai-nn}{Claim}
\newtheorem*{theorem*}{Theorem}
\newtheorem*{definition*}{Definition}
\newtheorem*{remark*}{Remark}
\newtheorem*{schoencond*}{Sch\"onflies Condition}
\author[1]{Li Feng}
\author[2]{Jun Luo}
\author[3]{Xiao-Ting Yao} 
\affil[1]{Mathematics and Computer Science, Albany State University, Albany, Georgia 31705, USA}
\affil[1]{li.feng@asurams.edu}
\affil[2]{School of Mathematics,
    Sun Yat-sen University, Guangzhou 510275, China}
\affil[2]{luojun3@mail.sysu.edu.cn}
\affil[3]{School of Mathematical Sciences, Fudan University, Shanghai 200433, China}
\affil[3]{(corresponding author): yaoxiaoting@fudan.edu.cn}
\title{\Large On Lambda Function and a Quantification of Torhorst Theorem\thanks{Supported by the Chinese National Natural Science Foundation Projects 11871483, 11771391.}}
\date{}
\begin{document}
\maketitle

\begin{abstract}
To any compact $K\subset\hat{\mathbb{C}}$ we associate a map $\lambda_K: \hat{\mathbb{C}}\rightarrow\mathbb{N}\cup\{\infty\}$,  the lambda function, so that a planar continuum $K$ is locally connected if and only if $\lambda_K(x)=0$ for all $x\in\hat{\mathbb{C}}$.
We establish inequalities and a gluing lemma for the lambda functions of specific compacta. One of these inequalities comes from an interplay between the topological difficulty of a planar compactum $K$ and that of a compactum $L$ lying on the boundary of a component of $\hat{\mathbb{C}}\setminus K$.
It generalizes and quantifies the Torhorst Theorem, a fundamental result of plane topology. We also find three conditions under which this inequality becomes an equality. Under one of these conditions, such an equality provides a natural extension of Whyburn's Theorem, which  is a partial converse to Torhorst Theorem. The gluing lemma mentioned above is of its own interest and also has motivations from the study of polynomial Julia sets.

\textbf{Keywords.} \emph{Lambda Function, \ Torhorst Theorem, Gluing Lemma}

\textbf{Math Subject Classification 2010: Primary 30D40, Secondary 54D05.}
\end{abstract}

\newpage


\section{Lambda Function and Motivations} 

The existence of the core decomposition with Peano quotient for planar compacta \cite{LLY-2019} enables us to associate to each compact set $K\subset\hat{\bbC}$ a map $\lambda_K:\hat{\bbC}\rightarrow\bbN\cup\{\infty\}$, called the {\em lambda function of $K$}. This function sends all points $x\notin K$ to zero and may take a positive value for some $x\in K$. It ``quantifies'' certain aspects of the topological structure of $K$, that is more or less related to the property of being locally connected. In particular, a continuum $K\subset\hat{\bbC}$ is locally connected if and only if $\lambda_K(x)=0$ for all $x\in\hat{\mathbb{C}}$. On the other hand, if  a continuum $K\subset\hat{\bbC}$ is not locally connected at $x\in K$ then $\lambda_K(x)\ge1$; but the converse is not necessarily true.

The quantification in terms of lambda function allows us to carry out a new analysis of the topology of $K$, by computing or estimating  $\lambda_K(x)$ for specific choices of $x\in K$. In the current paper, we will investigate an interesting phenomenon that was firstly revealed in a fundamental result by Marie Torhorst, as one of the three highlights of \cite{Torhorst}. This result is often referred to as Torhorst Theorem \cite[p.106, (2.2)]{Whyburn42} and reads as follows.
\begin{theorem*}[{\bf Torhorst Theorem}]
The boundary $F$ of every complementary domain $R$ of a locally connected continuum $M\subset\hat{\mathbb{C}}$ is itself a locally connected continuum.
\end{theorem*}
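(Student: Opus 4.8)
The plan is to prove the Torhorst Theorem by a two-step strategy: first establish that the boundary $F=\partial R$ is a continuum (compact, connected), and then upgrade connectivity to local connectivity by exploiting the local connectivity of the ambient continuum $M$. Since $R$ is a complementary domain of $M$ in $\hat{\mathbb{C}}$, its boundary $F$ is automatically compact and nonempty. The connectivity of $F$ follows from a classical fact of plane topology: the boundary of a complementary domain of a (Peano) continuum in $S^2$ is connected. I would recall or cite this directly, as it is a standard consequence of the unicoherence of $\hat{\mathbb{C}}$ together with the local connectivity of $M$.

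\medskip

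For local connectivity, the natural approach is to use the characterization of local connectivity for continua in $\hat{\mathbb{C}}$ via the boundary behaviour of the complementary domain $R$. Because $M$ is locally connected, a theorem of Carath\'eodory type applies: I would first show that $R$, being a simply connected domain whose complement is locally connected, has the property that its boundary is locally connected, by passing through the Carath\'eodory extension. Concretely, I would uniformize $R$ by the open unit disk $\bbD$ via a Riemann map $\varphi:\bbD\to R$. The key analytic input is that local connectivity of $M$ forces the prime-end structure of $R$ to be trivial enough that $\varphi$ extends continuously to $\overline{\bbD}$; then $F=\partial R$ is the continuous image of the circle $\partial\bbD=\bbS^1$, and since a continuous image of a Peano continuum is a Peano continuum (the Hahn–Mazurkiewicz theorem), $F$ is locally connected.

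\medskip

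The crux of the argument, and the step I expect to be the main obstacle, is justifying that the Riemann map $\varphi:\bbD\to R$ extends continuously to the closed disk. This is where the local connectivity of $M$ must be converted into an accessibility/boundary-regularity statement for $R$. The cleanest route is Carath\'eodory's theorem, which asserts that $\varphi$ extends to a continuous map $\overline{\bbD}\to\overline{R}$ if and only if $\partial R$ is locally connected — but that is precisely what we want to prove, so I must instead invoke the equivalent criterion that $\varphi$ extends continuously if and only if $\partial R$ is locally connected \emph{if and only if} every prime end of $R$ corresponds to a single boundary point. To break this circularity I would argue directly that local connectivity of $M$ prevents the two pathologies that obstruct continuous extension, namely boundary points with ``nondegenerate impressions'' and oscillating boundary arcs: given any $\varepsilon>0$, local connectivity of $M$ produces a connected neighborhood basis, so that for each boundary point $p\in F$ the portion of $F$ near $p$ lies in small connected sets, ruling out the oscillation that would create a nondegenerate prime-end impression.

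\medskip

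An alternative, more self-contained route — which I would prefer if the analytic machinery feels heavy — is to verify local connectivity of $F$ directly from the definition using the $\lambda_K$ framework introduced in this paper. Since the excerpt defines $\lambda_M(x)=0$ for all $x$ precisely when $M$ is locally connected, the real content of the Torhorst Theorem in this language should be the inequality relating $\lambda_L$ for a compactum $L\subset\partial R$ to $\lambda_M$ (the ``interplay'' the introduction advertises). Thus I would ultimately recast the whole statement as the special case $\lambda_M\equiv 0 \Rightarrow \lambda_F\equiv 0$ of the forthcoming quantitative inequality, proving it by showing that any failure of local connectivity of $F$ at a point $x$ — encoded as $\lambda_F(x)\ge 1$ — would force a corresponding failure $\lambda_M(x)\ge 1$ of $M$, contradicting $\lambda_M\equiv 0$. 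The hard part here is constructing, from a non-locally-connected configuration of $F$ at $x$, an explicit witness (a sequence of non-degenerating connected pieces of $M$ accumulating on a nondegenerate set) certifying $\lambda_M(x)\ge 1$; the core decomposition of \cite{LLY-2019} should supply exactly the tool needed to make this witness canonical.
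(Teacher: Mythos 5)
Both of your routes stop precisely at the step that carries the content of the theorem. In the conformal route, the pivot assertion --- that $R$ is a simply connected domain \emph{whose complement is locally connected} --- is not something you have: the complement of $R$ is not $M$ but $M$ together with all the \emph{other} complementary domains of $M$, and the equivalence you want ((iii) and (iv) of \cite[p.20, Theorem 2.1]{Pom92}, recalled in Remark \ref{why_partially_unshielded}) needs local connectivity of $\hat{\mathbb{C}}\setminus R$. Deducing that from local connectivity of $M$ already requires the null-sequence property of the complementary domains of a Peano continuum (the $E$-continuum material of \cite[p.113]{Whyburn42}) together with a gluing step, neither of which appears in your sketch. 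Your substitute argument --- ``local connectivity of $M$ produces a connected neighborhood basis, so the portion of $F$ near $p$ lies in small connected sets'' --- is a non sequitur: the small connected sets are pieces of $M$, and their traces on $F=\partial R$ need not be connected; that they can be chosen connected \emph{is} Torhorst's statement. Note moreover that this step nowhere uses that $F$ bounds a single domain, so if it were valid it would apply verbatim to the full topological boundary $\partial K$ of a Peano continuum $K$; Example \ref{bd_larger} (a Peano continuum whose boundary fails to be locally connected on $\partial W$) shows that conclusion is false. Any correct argument must exploit accessibility of $F$ through the one domain $R$.

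Your second route is indeed the paper's strategy --- Torhorst is obtained as the special case $\lambda_K\equiv 0$ of Theorem \ref{lambda_inequality} --- but the ``witness construction'' you defer is the entire proof, and it is carried out in Lemma \ref{key-lemma}, not by the core decomposition machinery you point to. There, a nondegenerate pair $(x,y)$ in the closed Sch\"onflies relation $\overline{R_F}$ yields, via \cite[Theorem 1.3]{LYY-2020}, infinitely many components $P_i$ of $F$ minus two small disks joining the two boundary circles; the hypothesis $F\subset\partial R$ enters when arcs $\beta_i\subset R$ are threaded toward the limit continuum $P_\infty$, and since $R\cap K=\emptyset$ these arcs lie in pairwise distinct complementary components of $K$ inside an annulus, producing Sch\"onflies-related pairs for $K$. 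Two features your outline does not anticipate: first, the transfer is not one-step --- one only obtains $\overline{R_F}[x]\subset\overline{R_K}^{\,2}[x]$, after a case analysis producing the pairs $(u_n,v_n)$, and this weaker containment suffices only because atoms absorb fibers and hence iterated fibers (proof of Theorem \ref{compare_atoms}), after which the pointwise inequality of lambda functions follows by iterating through atoms of atoms; second, the relevant tool is the metric characterization of the fibers of $\overline{R_K}$ plus planar separation arguments, with the core decomposition entering only at the promotion-to-atoms stage. Finally, a small repair to your framing: $\lambda_F(x)\ge 1$ is not equivalent to failure of local connectivity of $F$ at $x$ (only one implication holds, as the paper notes), so the contrapositive must run through nondegenerate fibers and atoms rather than pointwise non-local-connectivity.
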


We will obtain an inequality that includes the Torhorst Theorem as a simple case.
The inequality is about the lambda function $\lambda_K$. The function $\lambda_K$ is based on the core decomposition of $K$ with Peano quotient \cite{LLY-2019}, which is motivated by some open questions in \cite{Curry10} and extends two earlier models of polynomial Julia sets developed in \cite{BCO11,BCO13}. Those models, briefly called BCO models, provide efficient ways (1) to describe the topology of unshielded compacta, like polynomial Julia sets, and (2) to obtain specific factor systems for polynomials restricted to the Julia set. The BCO models are special cases of a more general model, working well for all planar compacta, that associates natural factor systems to the dynamics of  rational functions \cite{LLY-2019,LYY-2020}.

Recall that a {\bf Peano continuum} means the image of $[0,1]$ under a continuous map. By  Hahn-Mazurkiewicz-Sierpi\'nski Theorem
\cite[p.256, \S 50, II, Theorem 2]{Kuratowski68},  a continuum is locally connected if and only if it is a Peano continuum. On the other hand,
a {\bf Peano compactum} is defined to be a compactum having locally connected components such that  for any constant $C>0$ at most finitely many of its components are of diameter greater than $C$. Therefore, the Cantor ternary set is a Peano compactum and a Peano continuum is just a Peano compactum that is connected.  Concerning how such a definition arises from the discussions of BCO models, we refer to  \cite[Theorems 1-3]{LLY-2019}.


Given a compactum $K\subset\hat{\mathbb{C}}$, there exists an upper semi-continuous decomposition  of $K$ into sub-continua, denoted as $\Dc_K^{PC}$, such that (1) the quotient space is a Peano compactum and (2) $\Dc_K^{PC}$ refines every other such decomposition of $K$  \cite[Theorem 7]{LLY-2019}.
We call $\Dc_K^{PC}$ the core decomposition of $K$ with Peano quotient. The hyperspace $\Dc_K^{PC}$ under quotient topology is called the Peano model of $K$. Every $d\in\Dc_K^{PC}$ is called an {\bf atom} of $K$, or an {\bf order-one atom}, or an atom of order $1$. Every atom of an order-one atom is called an {\bf order-two atom}, and so on. Note that a compactum such as the pseudo-arc or Cantor's Teepee may have a non-degenerate atom of  order $\infty$.

Considering the atoms of a compactum $K\subset\hat{\mathbb{C}}$ as its structural units, we summarize the results obtained in \cite[Theorem 7]{LLY-2019} and \cite[Theorem 1.1]{LYY-2020} in the following way.
\begin{theorem*}[{\bf Theory of Atoms}]
Every compactum $K\subset\hat{\mathbb{C}}$ is made up of atoms; all its atoms are sub-continua of $K$ and they form an upper semi-continuous decomposition, with its quotient space being a Peano compactum, that refines every other such decomposition; moreover, for any finite-to-one open map $f:\hat{\bbC}\rightarrow\hat{\bbC}$ and any atom $d$ of $K$, each component of $f^{-1}(d)$ is an atom of $f^{-1}(K)$.
\end{theorem*}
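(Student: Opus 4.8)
I would split the statement into two essentially independent parts: the existence, upper semi-continuity, Peano-quotient and minimality of the core decomposition (the content of \cite[Theorem 7]{LLY-2019}), and the preimage property under finite-to-one open maps (the content of \cite[Theorem 1.1]{LYY-2020}).

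For the first part, let $\mathfrak{D}_K$ denote the family of all upper semi-continuous decompositions of $K$ into subcontinua whose quotient space is a Peano compactum. This family is nonempty, since the decomposition of $K$ into its connected components is upper semi-continuous (components coincide with quasicomponents for a compactum, so they are separated by clopen saturated neighborhoods) and has a totally disconnected, hence trivially Peano, quotient. The plan is to produce the finest member of $\mathfrak{D}_K$. The decisive step is a \emph{directedness lemma}: given $\mathcal{D}_1,\mathcal{D}_2\in\mathfrak{D}_K$, the decomposition whose blocks are the connected components of the sets $d_1\cap d_2$ (with $d_i\in\mathcal{D}_i$) again lies in $\mathfrak{D}_K$. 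Granting this, for each $x\in K$ the blocks through $x$, taken over the whole family, form a system that is downward directed by inclusion, so their intersection $A_x$ is a filtered intersection of subcontinua all containing $x$ and is therefore itself a subcontinuum. Setting $\Dc_K^{PC}=\{A_x:x\in K\}$ gives a decomposition that, by its very construction as an intersection, refines every member of $\mathfrak{D}_K$; this yields minimality at once.

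It then remains to check that $\Dc_K^{PC}$ itself belongs to $\mathfrak{D}_K$, that is, that it is upper semi-continuous with Peano quotient. Upper semi-continuity should follow from compactness together with the closedness of each member relation. The genuinely delicate point — and the main obstacle in this half — is that the quotient by $\Dc_K^{PC}$ embeds into the inverse limit of the Peano quotients $K/\mathcal{D}$, and a closed subspace of an inverse limit of Peano compacta need not be locally connected. I would avoid this trap by working with a Property-S type characterization of Peano compacta (for every $\varepsilon>0$ the space is a finite union of connected sets of diameter $<\varepsilon$, together with the diameter condition on components) and showing that this property is inherited by the directed meet: the finitely many blocks of large diameter are controlled uniformly along the directed family, and uniform local connectivity passes to the intersection. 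The order-two, order-three, $\dots$ atoms are then obtained by iterating the construction, so that the entire hierarchy of atoms is recovered.

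For the second part, fix a finite-to-one open map $f:\hat{\bbC}\to\hat{\bbC}$, write $L=f^{-1}(K)$, and let $\mathcal{E}$ be the decomposition of $L$ into the connected components of the sets $f^{-1}(d)$, $d\in\Dc_K^{PC}$. I would first establish, as a standalone lemma, that finite-to-one open maps both preserve and reflect the Peano-compactum property; since $f$ is automatically closed on the compact sphere, the preimage of an upper semi-continuous decomposition is again upper semi-continuous, and $f$ descends to a finite-to-one open surjection of $L/\mathcal{E}$ onto the Peano compactum $K/\Dc_K^{PC}$. Hence $\mathcal{E}\in\mathfrak{D}_L$, and the minimality from the first part gives that $\Dc_L^{PC}$ refines $\mathcal{E}$. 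For the reverse refinement I would push the core decomposition of $L$ forward by $f$ and verify that the resulting decomposition of $K$ lies in $\mathfrak{D}_K$, so that $\Dc_K^{PC}$ refines it and, pulling back, $\mathcal{E}$ refines $\Dc_L^{PC}$. The two refinements force $\mathcal{E}=\Dc_L^{PC}$, which is exactly the assertion that each component of $f^{-1}(d)$ is an atom of $L$. The principal difficulty here is local control at the finitely many branch points of $f$, where $f$ is not a local homeomorphism and several sheets of $f^{-1}(d)$ may merge; both the upper semi-continuity of $\mathcal{E}$ and the finite-to-one openness of the induced map between quotients must be argued with this in mind, which is why I would isolate the preservation and reflection of the Peano-compactum property under finite-to-one open maps as the key preliminary lemma.
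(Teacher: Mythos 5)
Your split of the statement into two halves is exactly right, and it matches the paper, which does not prove this theorem at all: it is stated expressly as a summary of \cite[Theorem 7]{LLY-2019} and \cite[Theorem 1.1]{LYY-2020}. Measured against those proofs, however, your first half has a genuine gap. Everything rests on the ``directedness lemma'' together with the claim that the Peano property survives the directed meet, and neither is proved: you write ``granting this'' for the first, and for the second you offer only that ``the finitely many blocks of large diameter are controlled uniformly along the directed family.'' There is no such uniformity available a priori --- the Property-S constants of the quotients $K/\mathcal{D}$ degenerate as $\mathcal{D}$ runs through the directed family, and your own remark about closed subspaces of inverse limits of Peano compacta identifies the trap without escaping it; the proposed fix restates the difficulty rather than resolving it. More tellingly, nothing in your outline uses $K\subset\hat{\bbC}$: components, closed relations, filtered intersections and Property S make sense in any compact metric space, so if the argument worked it would deliver a finest Peano-quotient decomposition with no planarity hypothesis. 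The actual proof of \cite[Theorem 7]{LLY-2019} is intrinsically planar: it defines the Sch\"onflies relation $R_K$ via pairs of disjoint Jordan curves (as recalled in Section \ref{proof-c} of this paper), shows that no upper semi-continuous decomposition with Peano quotient can split a fiber $R_K[x]$ --- which gives minimality at once --- and then proves the hard direction, that the finest decomposition not splitting these fibers has Peano quotient. The directed-meet step you take for granted is, in effect, the theorem itself, and it is extracted from planar separation arguments, not from soft compactness.

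Your second half has the right skeleton (the double-refinement trick via minimality, which is indeed how one exploits the core property), but two of its steps are unsound as stated. First, $\left\{f(e): e\in\Dc_L^{PC}\right\}$ need not be a partition of $K$: distinct atoms of $L=f^{-1}(K)$ can have partially overlapping images, so ``push the core decomposition forward and verify it lies in $\mathfrak{D}_K$'' requires a saturation step, after which the blocks are no longer obviously subcontinua and the Peano-quotient property is no longer a consequence of your lemma. Second, your key preliminary lemma --- that finite-to-one open maps reflect the Peano-compactum property --- is invoked not for $f$ itself but for the induced map $L/\mathcal{E}\rightarrow K/\Dc_K^{PC}$ between abstract quotient compacta; the openness of this induced map is unverified (precisely at the branch points you flag), and reflection at that level of generality is not easier than \cite[Theorem 1.1]{LYY-2020} itself, whose proof goes through the concrete fiber characterization of $\overline{R_K}$ (\cite[Theorem 1.3]{LYY-2020}, quoted and used repeatedly in Sections \ref{proof-c}--\ref{glue} of this paper) and the local structure of finite-to-one open maps of the sphere. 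In short, both halves of the proposal defer the entire difficulty to unproven lemmas whose content is comparable to the cited theorems, and the first half would, if correct, prove a statement strictly stronger than the planar one being quoted; that is where it breaks down.
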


Using the hierarchy formed by {\bf atoms of atoms}, we introduce the lambda function.
\begin{definition*}[{\bf Lambda Function}]
Given a compactum $K\subset\hat{\bbC}$. Let  $\lambda_K(x)=0$ for $x\notin K$. Let $\lambda_K(x)=m-1$ for any $x\in K$, if there is a smallest integer $m\ge1$ such that $\{x\}$ is an order-$m$ atom of $K$. If such an integer $m$ does not exist, we put $\lambda_K(x)=\infty$.
\end{definition*}
When little is known about the topology of $K$, it is  difficult to completely determine the values of $\lambda_K$. On the other hand, the level sets $\lambda_K^{-1}(n)(n\ge0)$ are ``computable''  for typical choices of $K$. In such circumstances, the lambda function $\lambda_K$ is useful in describing certain aspects of the topology of $K$. For instance, one may check the following observations: (1) a compact set $K\subset\hat{\bbC}$ is a Peano compactum if and only if $\lambda_K(x)=0$ everywhere; (2) if $K=\left\{t+\left(\sin\frac1t\right){\bf i}: 0<t\le1\right\}\cup\{s{\bf i}: |s|\le 1\}$  then $\lambda_K^{-1}(1)=\{s{\bf i}: |s|\le 1\}$ and $\lambda_K^{-1}(0)=\hat{\bbC}\setminus \lambda_K^{-1}(1)$; and (3) if $K=\left\{t+s{\bf i}: t\in[0,1], s\in\mathcal{C}\right\}$, where $\mathcal{C}$ denotes Cantor's ternary set, then $\lambda_K^{-1}(1)=K$ and $\lambda_K^{-1}(0)=\hat{\bbC}\setminus K$.

The lambda function $\lambda_K$ helps us to {\bf measure}, locally and globally, how far a compact set $K\subset\hat{\mathbb{C}}$ is from being a Peano compactum. When $K=\partial\Omega$ for a planar domain $\Omega\subset\hat{\bbC}$, which is the image of a conformal map $\varphi:\mathbb{D}\rightarrow\Omega$ of the unit disk $\mathbb{D}=\{z\in\mathbb{C}: |z|<1\}$, the study of $\lambda_K$ is particularly interesting. 
In order to illustrate how $\lambda_K$ is related to the boundary behavior of $\varphi$, we recall a theorem by Fatou \cite[p.17, Theorem 2.1]{CL66}, which says that the radial limits $\lim\limits_{r\rightarrow1}\varphi\left(re^{{\bf i}\theta}\right)$ exist for all $\theta\in[0,2\pi)$, except possibly for a set of linear measure zero.
One may call the map $e^{{\bf i}\theta}\mapsto \lim\limits_{r\rightarrow1}\varphi\left(re^{{\bf i}\theta}\right)$ the {\bf boundary function} of $\varphi$, denoted  as $\varphi^b$. Sometimes we also call  $\varphi^b$ the {\bf boundary of $\varphi$}. Its domain consists of all points $e^{{\bf i}\theta}\in\partial\mathbb{D}$ such that the limit $\lim\limits_{r\rightarrow1}\varphi\left(re^{{\bf i}\theta}\right)$ exists.

Recall that the prime end at $e^{{\bf i}\theta}$ is of the first or the second type, if  the  limit $\lim\limits_{r\rightarrow1}\varphi\left(re^{{\bf i}\theta}\right)$  exists \cite[p.177, Theorem 9.7]{CL66}. 
By Carath\'eodory's {\bf Continuity Theorem} \cite[p.18]{Pom92}, the boundary function $\varphi^b$ is defined well and continuous on the whole unit circle $\partial\bbD$ if and only if the boundary $\partial\Omega$ is a Peano continuum. By \cite[p.17,Theorem 2.1]{CL66}, if $\partial\Omega$ is not a Peano continuum then $\varphi^b$ is a member of $L^\infty(\partial\bbD)\setminus C(\partial\bbD)$. Therefore, we are curious about two types of quantities: (1) those describing how far $\varphi^b$ is from being continuous and (2) those  measuring how far $\partial\Omega$ is from being locally connected.

The first type concerns the asymptotic of $\varphi(z)$ for $z\in\bbD$ as $|z|\rightarrow 1$. The second type concerns the topology of $\partial\Omega$.
The lambda function $\lambda_{\partial\Omega}$ gives rise to a quantity of the second type. This function vanishes everywhere if and only if $\varphi^b$ belongs to $C(\partial\bbD)$. Among others, the questions below are of some interest: {\em What does it mean if $\lambda_{\partial\Omega}(x)\le 1$ for all $x$? Can we say something about $\lambda_{\partial\Omega}$ when $\varphi^b$ has finitely many discontinuities?}

In particular, one may consider the case that $\varphi(\mathbb{D})$ coincides with the complement of the Mandelbrot set $\M$. The set $\mathcal{M}$ consists of all the complex parameters $c$ such that the Julia set of  $f_c(z)=z^2+c$ is connected. Its local connectedness is still an open question. 
Due to the deep works on quadratic polynomials, we know that $\M$ is locally connected at many of its points, such as the Misiurewicz points and those lying on the boundary of a hyperbolic component. From these known results arises  a natural question: {\bf Is it true that $\lambda_\M(x)=0$ for all $x$ at which $\M$ is known to be locally connected?} Another  of some interest is, {\bf Can we find some upper bound for the lambda function $\lambda_\M$?} In particular, {\bf Can we show that $\lambda_\M(x)\le1$ for all $x$?}
Very recently, Yang and Yao \cite{YY-2020} show that $\lambda_{\mathcal{M}}(x)=0$ for all points $x$ lying on the boundary of a hyperbolic component. More studies on questions of a similar nature can be expected.


\section{Main Results}

In the current paper, we want to analyze $\lambda_K, \lambda_L$ for compacta $L\subset K\subset\hat{\mathbb{C}}$, that satisfy specific  properties. Our analysis is connected to very basic results of topology, such as the Torhorst Theorem and the gluing lemma for continuous functions.

We will extend and quantify the  Torhorst Theorem by an inequality, stating that the lambda function $\lambda_K$ of any compactum $K\subset\hat{\mathbb{C}}$ is an {\bf upper bound} of $\lambda_{\partial U}$ for any  complementary component $U$ of $K$.
To this end, let us examine how the atoms of $K$ are related to those of any compactum  $L$ lying on the boundary of a component $U$ of $\hat{\bbC}\setminus K$.
\begin{main-theorem}\label{compare_atoms}
Given a compact $K\subset\hat{\mathbb{C}}$ and a component $U$ of \ $\hat{\bbC}\setminus K$. If $L\subset\partial U$ is a compactum then every atom of $L$ lies in a single atom of $K$. Consequently, every atom of $L$ lies in a single atom of $\partial U$. \end{main-theorem}

With Theorem \ref{compare_atoms}, we compare the lambda functions  $\lambda_K,\lambda_L$ and obtain the following.

\begin{main-theorem}
\label{lambda_inequality}
Given a compactum $K\subset\hat{\mathbb{C}}$ and  a component $U$ of \ $\hat{\bbC}\setminus K$. If $L\subset\partial U$ is a compactum then $\lambda_L(x)\le \lambda_K(x)$ for all $x\in\hat{\bbC}$.
\end{main-theorem}

Let $\mathcal{A}$ consist of the components of $\hat{\mathbb{C}}\setminus K$. The  {\bf envelope function} of $K$ is defined as
$\displaystyle \tilde{\lambda}_K(x)=\sup\limits_{L\subset\partial U, U\in\mathcal{A}}\lambda_L(x)\ \left(x\in\hat{\bbC}\right).$
By Theorems \ref{compare_atoms} and \ref{lambda_inequality}, we have $\tilde{\lambda}_K(x)=\sup\limits_{U\in\mathcal{A}}\lambda_{\partial U}(x)$  and $\tilde{\lambda}_K(x)\le\lambda_K(x)$ for all $x\in\hat{\bbC}$. From now on we call $\tilde{\lambda}_K(x)\le\lambda_K(x)\left(x\in\hat{\mathbb{C}}\right)$  the {\bf\em lambda inequality}, which is also written as $\tilde{\lambda}_K\le\lambda_K$. If $K$ is a Peano compactum then $\lambda_K$ and hence $\lambda_L$ vanish everywhere, for any compactum $L$ lying on the boundary of any component $U$ of $\hat{\mathbb{C}}\setminus K$. That is to say, each compactum $L\subset\partial U$ is a Peano compactum. This includes as a special sub-case the Torhorst Theorem, in which  $K$ is assumed to be a Peano continuum and  $L=\partial U$.

\begin{rem}
In the above setting, the boundary $\partial K$  may not be locally connected, even if it is a continuum.
See \cite[Example 3.2]{Luo07} or Example \ref{bd_larger} of this paper. In Example \ref{bd_smaller}, we will construct a continuum $L$ such that the range of $\lambda_L-\lambda_{\partial L}$ is $\{-1,0,1\}$.  
\end{rem}

Now we consider compacta $K\subset\hat{\mathbb{C}}$ for which the {\bf Lambda Equality} $\tilde{\lambda}_K=\lambda_K$ holds, so that $\tilde{\lambda}_K(x)=\lambda_K(x)$ for all $x\in\hat{\bbC}$.
Let us start from a simple example.
\begin{exam}\label{why_E_compactum}
Let $U$ be the complement of the unit square $\{a+b{\bf i}: 0\le a,b\le 1\}$. Let $K$ be the compactum consisting of $\partial U$ and an infinite sequence of squares (only the boundary) of side length $<1$. These squares are centered at $0.5+0.5{\bf i}$ and converge to $\partial U$ under Hausdorff distance. Then $\lambda_K(x)-\tilde{\lambda}_K(x)=\left\{\begin{array}{ll}1& x\in\partial U\\ 0&\text{otherwise}.\end{array}\right.$

\begin{figure}[ht] \vskip -0.5cm \begin{center} \begin{tikzpicture}[x=0.2cm,y=0.2cm,scale=0.45] \draw[black,thick] (0,0) -- (0,32) -- (32,32) -- (32,0) -- (0,0);  \foreach \j in {3,...,6} {    \draw[gray, thick] (2*\j,32-2*\j) -- (32-2*\j,32-2*\j) -- (32-2*\j,2*\j) -- (2*\j,2*\j) --(2*\j,32-2*\j); }     \foreach \j in {2,...,5} {\fill[gray] (\j,\j) circle(0.5ex); \fill[gray] (32-\j,32-\j) circle(0.5ex); \fill[gray] (\j,32-\j) circle(0.5ex); \fill[gray] (32-\j,\j) circle(0.5ex); }    \draw(0,1) node[left]{$0$}; \draw(32,1) node[right]{$1$}; \draw(0,31) node[left]{${\bf i}$}; \end{tikzpicture} \end{center} \vskip -1.0cm \caption{A simplified depiction of $K$ and the small squares.}\label{seq-squares}\vskip -0.25cm \end{figure}
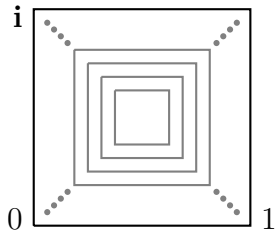
\end{exam}

If $K$ is given in Example \ref{why_E_compactum} then $\hat{\mathbb{C}}\setminus K$ has infinitely many components of diameter greater than $1-\epsilon$ for some constant $\epsilon\in(0,1)$. Borrowing the idea of {\bf $E$-continuum} \cite[p.113]{Whyburn42}, we define an {\bf $E$-compactum} to be a planar compactum such that for any constant $\varepsilon>0$ its complement has at most finitely many components of diameter $>\varepsilon$. Unfortunately, the condition of $E$-compactum alone is still not sufficient for the {\bf Lambda Equality} $\tilde{\lambda}_K=\lambda_K$.  See Examples \ref{E-compactum} and \ref{finite-comp}.

The theorem below gives three conditions under which the Lambda Equality holds.

\begin{main-theorem}\label{equality-case}
Given a compactum $K\subset\hat{\mathbb{C}}$, the Lambda Equality  $\tilde{\lambda}_K=\lambda_K$ holds if one of the following conditions is satisfied:

(i) $K$ is an $E$-compactum such that the envelope function $\tilde{\lambda}_K(x)$ vanishes everywhere;

(ii)  $K$ is an $E$-compactum whose complementary components have disjoint closures.

(iii) $K$ is a partially unshielded compactum.
\end{main-theorem}

\begin{rem}
In (i) and (ii) of Theorem \ref{equality-case}, the assumption  that $K$ is an $E$-compactum  can not be removed. We may set $K=[0,1]\!\times\![0,{\bf i}]\setminus\left(\bigcup\limits_1^\infty R_n\right)$, with $R_n=\left(\frac{1}{3n},\frac{2}{3n}\right)\times\left(\frac13{\bf i},\frac23{\bf i}\right)$. If $W=\hat{\mathbb{C}}\setminus[0,1]\!\times\![0,{\bf i}]$ then $\partial W\cap\partial R_n=\emptyset$ for all $n\ge1$ and  $\partial R_n\cap \partial R_m=\emptyset$ for $n\ne m$. See Figure \ref{non-E} for a simple depiction of $K$.
The continuum $K$ is not an $E$-continuum but it satisfies the other assumptions in (i) and (ii) of Theorem \ref{equality-case}. It has exactly one non-degenerate atom, the segment $\left[\frac13{\bf i},\frac23{\bf i}\right]$. Thus $\lambda_K(x)-\tilde{\lambda}_K(x)=\left\{\begin{array}{ll}1& x\in\left[\frac13{\bf i},\frac23{\bf i}\right]\\ 0&\text{otherwise}.\end{array}\right.$

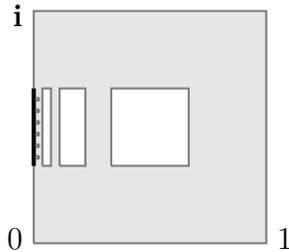
\begin{figure}[ht]
\vskip -0.75cm
\begin{center}
\begin{tikzpicture}[x=5cm,y=5cm,scale=0.618]
\fill[gray!20,thick] (0,0) -- (0,1) -- (1,1) -- (1,0) -- (0,0);
\draw[gray,thick] (0,0) -- (0,1) -- (1,1) -- (1,0) -- (0,0);
\draw[black, ultra thick] (0,1/3) -- (0,2/3);

\foreach \j in {1,...,3}
{
 \fill[white] (1/3^\j,1/3) -- (2/3^\j,1/3) -- (2/3^\j,2/3) -- (1/3^\j,2/3) --(1/3^\j,1/3);
    \draw[gray, thick] (1/3^\j,1/3) -- (2/3^\j,1/3) -- (2/3^\j,2/3) -- (1/3^\j,2/3) --(1/3^\j,1/3);
}
\foreach \j in {1,...,6}
{\fill[gray] (1/60,0.32+0.05*\j) circle(0.3ex);
}

\draw(0,0.02) node[left]{$0$};
\draw(1,0.02) node[right]{$1$};
\draw(0,0.98) node[left]{${\bf i}$};

\end{tikzpicture}
\end{center}
\vskip -0.95cm
\caption{A depiction for $K$ and some of the rectangles.}\label{non-E}
\vskip -0.25cm
\end{figure}
\end{rem}

Note that the Lambda Equality may not hold for an $E$-compactum  $K\subset\hat{\mathbb{C}}$, even if it  has finitely many complementary components. See Example \ref{finite-comp}. Also notice that the Lambda Equality under condition (i) implies the theorem below. This extends Whyburn's Theorem \cite[p.113, (4.4)]{Whyburn42}, which says that {\em an $E$-continuum is a Peano continuum if and only if the boundary of any of its complementary components is a Peano  continuum}.
\begin{theorem*}[{Extended Whyburn's Theorem}] An $E$-compactum is a Peano compactum if and only if the boundary of any of its complementary components is a Peano compactum.
\end{theorem*}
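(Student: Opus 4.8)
The plan is to reduce both implications to the language of the lambda function, so that the theorem becomes a direct consequence of the lambda inequality together with condition~(i) of Theorem~\ref{equality-case}. The dictionary I will use is the equivalence ``a compactum is a Peano compactum if and only if its lambda function vanishes everywhere,'' applied both to $K$ and to the individual boundaries $\partial U$, together with the identity $\tilde{\lambda}_K(x)=\sup_{U\in\mathcal{A}}\lambda_{\partial U}(x)$ recorded after Theorem~\ref{lambda_inequality}.

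For the forward implication, I would suppose $K$ is a Peano compactum, so $\lambda_K\equiv 0$. The lambda inequality $\tilde{\lambda}_K\le\lambda_K$ then forces $\tilde{\lambda}_K\equiv 0$, and since $\tilde{\lambda}_K=\sup_{U\in\mathcal{A}}\lambda_{\partial U}$ this means $\lambda_{\partial U}\equiv 0$ for every complementary component $U$. Reading the dictionary in reverse, each $\partial U$ is a Peano compactum. I note that this half uses neither the $E$-compactum hypothesis nor condition~(i); it is essentially the Torhorst direction repackaged in terms of $\lambda$.

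For the converse, I would suppose the boundary $\partial U$ of every complementary component $U$ is a Peano compactum, i.e. $\lambda_{\partial U}\equiv 0$ for all $U\in\mathcal{A}$. By the formula for the envelope function this is precisely the statement $\tilde{\lambda}_K\equiv 0$. Since $K$ is assumed to be an $E$-compactum, the two hypotheses of condition~(i) of Theorem~\ref{equality-case} are both met, and that theorem supplies the Lambda Equality $\lambda_K=\tilde{\lambda}_K\equiv 0$. Hence $K$ is a Peano compactum, completing the equivalence.

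The step I expect to carry the real weight is the converse, and specifically the recognition that the hypothesis ``every $\partial U$ is a Peano compactum'' is literally the vanishing of $\tilde{\lambda}_K$; once this identification is made, condition~(i) of Theorem~\ref{equality-case} does all the genuine work, and the only remaining point is that the $E$-compactum assumption is exactly what licenses its invocation. I would stress that this assumption cannot be dropped: the remark following Theorem~\ref{equality-case}, with $K=[0,1]\times[0,{\bf i}]\setminus\bigcup_n R_n$, exhibits a continuum all of whose complementary boundaries are Peano compacta (indeed $\tilde{\lambda}_K\equiv 0$) yet which is not itself a Peano compactum, since it carries the non-degenerate atom $\left[\frac13{\bf i},\frac23{\bf i}\right]$. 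Thus the role of the $E$-compactum condition is precisely to rule out such accumulation phenomena and force the converse to hold.
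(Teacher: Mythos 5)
Your proposal is correct and coincides with the paper's own argument: the forward direction is the lambda inequality $\tilde{\lambda}_K\le\lambda_K$ of Theorem \ref{lambda_inequality} combined with the dictionary ``Peano compactum $\Leftrightarrow$ $\lambda\equiv 0$'' and the identity $\tilde{\lambda}_K=\sup_{U}\lambda_{\partial U}$, while the converse is precisely Proposition \ref{equality-case-1} (condition (i) of Theorem \ref{equality-case}) invoked after translating the hypothesis into $\tilde{\lambda}_K\equiv 0$, which is exactly how the paper derives the Extended Whyburn's Theorem. Your side observations --- that the forward half needs neither the $E$-compactum hypothesis nor condition (i), and that the example in the remark following Theorem \ref{equality-case} shows the $E$-compactum assumption cannot be dropped in the converse --- also agree with the paper.
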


Theorem \ref{lambda_inequality} addresses how $\lambda_K$ and $\lambda_L$ are related when $L$ lies on the boundary of a component of $\hat{\bbC}\setminus K$. There are other choices of  planar compacta $K\supset L$ so that $\lambda_K$ and $\lambda_L$ are intrinsically related. A typical situation happens, if the common part of $\overline{K\setminus L}$ and $L$ is  a finite set.

\begin{main-theorem}\label{gluing_lemma}
If $K\supset L$ are planar compacta such that $\overline{K\setminus L}$ intersects $L$ at finitely many points then $\lambda_K(x)=\max\left\{\lambda_{\overline{K\setminus L}}(x),\lambda_L(x)\right\}$ for all $x$.  \end{main-theorem}

Setting $A=\overline{K\setminus L}$, we can infer that that $\lambda_K(x)$ coincides with $\lambda_A(x)$ for $x\in A\setminus L$ and with $\lambda_L(x)$ for $x\in L\setminus A$, equals $\max\left\{\lambda_{A}(x),\lambda_L(x)\right\}$ for $x\in A\cap L$, and vanishes for every $x\notin(A\cup L)$. Therefore, we have.

\begin{theorem*}[{Gluing Lemma for Lambda Functions}]\label{gluing_lemma_1}
If in addition $\lambda_A(x)=\lambda_L(x)$ for all $x\in A\cap L$ then $\lambda_K(x)=\lambda_{A\cup L}$ may be obtained by gluing $\lambda_A$ and $\lambda_L$, in the sense that
\begin{equation}\label{form-1}
\lambda_{A\cup L}(x)=\left\{\begin{array}{ll}\lambda_A(x)& x\in A\\ \lambda_L(x)& x\in L\\ 0& {otherwise.}\end{array}\right.
\end{equation}
\end{theorem*}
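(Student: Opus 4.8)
The plan is to read this statement as an immediate corollary of Theorem~\ref{gluing_lemma}, so that essentially all of the substantive work is inherited. Writing $A=\overline{K\setminus L}$, I first note that $A\cup L=K$: indeed $A\cup L\supseteq(K\setminus L)\cup L=K$, while $A\subseteq\overline{K}=K$ and $L\subseteq K$ give the reverse inclusion. By hypothesis $A\cap L$ is a finite set, so Theorem~\ref{gluing_lemma} applies verbatim and supplies the master identity $\lambda_K(x)=\max\{\lambda_A(x),\lambda_L(x)\}$ for every $x\in\hat{\bbC}$. The remaining task is purely to unwind this maximum on the four natural regions of the sphere and to check that the extra compatibility hypothesis renders the piecewise expression in \eqref{form-1} consistent.

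First I would verify that the right-hand side of \eqref{form-1} is well defined as a function. Its only possible ambiguity occurs on the overlap $A\cap L$, where the first clause assigns $\lambda_A(x)$ and the second assigns $\lambda_L(x)$. The added assumption that $\lambda_A(x)=\lambda_L(x)$ for all $x\in A\cap L$ is exactly what forces these two clauses to agree, so \eqref{form-1} genuinely specifies a single value at each point of $\hat{\bbC}$; elsewhere the three cases ``$x\in A$'', ``$x\in L$'', and the complement of $A\cup L$ are pairwise disjoint.

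Next I would run the case analysis against the master identity. For $x\notin A\cup L=K$ the lambda function vanishes by definition, matching the ``otherwise'' clause. For $x\in A\setminus L$ one has $\lambda_L(x)=0$ since $x\notin L$, whence $\max\{\lambda_A(x),\lambda_L(x)\}=\lambda_A(x)$; symmetrically $x\in L\setminus A$ yields $\lambda_L(x)$. Finally, for $x\in A\cap L$ the compatibility hypothesis collapses the maximum, giving $\max\{\lambda_A(x),\lambda_L(x)\}=\lambda_A(x)=\lambda_L(x)$, which is precisely the common value assigned there by both active clauses of \eqref{form-1}. Comparing region by region then shows $\lambda_{A\cup L}(x)$ equals the stated piecewise value everywhere.

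The honest assessment is that there is no serious obstacle: once Theorem~\ref{gluing_lemma} is in hand, this lemma is bookkeeping, and the compatibility hypothesis on $A\cap L$ exists solely to resolve the potential clash between the two overlapping clauses of \eqref{form-1}. The single point demanding a line of care is that well-definedness check on the finite overlap; everything else is a routine substitution into the maximum.
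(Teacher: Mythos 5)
Your proposal is correct and follows essentially the same route as the paper, which likewise obtains this statement as an immediate corollary of Theorem \ref{gluing_lemma} by unwinding $\lambda_K(x)=\max\left\{\lambda_A(x),\lambda_L(x)\right\}$ on the regions $A\setminus L$, $L\setminus A$, $A\cap L$, and $\hat{\bbC}\setminus K$, with the hypothesis $\lambda_A(x)=\lambda_L(x)$ on $A\cap L$ serving exactly to reconcile the two overlapping clauses of Equation (\ref{form-1}). Your added checks that $A\cup L=K$ and that the piecewise formula is well defined are correct and complete the bookkeeping.
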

\begin{rem} 
The formation in Equation (\ref{form-1}) is similar to the one illustrated in the well known gluing lemma for continuous maps. See for instance \cite[p.69, Theorem (4.6)]{Armstrong}.
In certain situations, Theorem \ref{gluing_lemma} helps us to analyze questions concerning local connectedness of polynomial Julia sets. See Question \ref{small_julia}. However,
the case that $A\cap L$ is an infinite set is more involved. In Theorem \ref{baby_M}, we will extend Theorem 4 to such a case under additional assumptions. This extension allows one to choose $K$ to be the Mandelbrot set and $L$ the closure of a hyperbolic component. For concrete choices of $A$ and $L$ so that Equation \ref{form-1} does not hold, we refer to Examples \ref{cantor_combs}, \ref{brooms} and \ref{cup-fs}.
\end{rem}

The other parts of this paper are arranged as follows. Section \ref{proof-c} is devoted to the proofs for Theorems \ref{compare_atoms} to \ref{lambda_inequality}. Section \ref{equality} gives a proof for  Theorem \ref{equality-case}. In Section \ref{glue} we firstly prove Theorem \ref{gluing_lemma} and then continue to establish Theorem \ref{baby_M}.
Section \ref{examples} gives examples.


\section{The Lambda Inequality}\label{proof-c}

In this section we prove Theorems \ref{compare_atoms} and \ref{lambda_inequality}.

We will study relations on compacta $K\subset\hat{\mathbb{C}}$. Such a relation is considered as a subset of the product space $K\times K$ and  is said to be {\bf closed} if it is closed in $K\times K$. Given a relation $\Rc$ on $K$, we call $\Rc[x]=\{y\in K: \ (x,y)\in\Rc\}$ the fiber of $\Rc$ at $x$. We mostly consider  closed relations $\Rc$ that are reflexive and symmetric, so that for all $x,y\in K$ we have (1) $x\in \Rc[x]$ and (2) $x\in\Rc[y]$ if and only if $y\in\Rc[x]$. For such a relation, the {\em iterated relation} $\Rc^2$ is defined naturally so that
$\displaystyle y\in\Rc^2[x]$ if and only if there exist $z\in K$ with $(x,z), (z,y)\in \Rc$.

Recall that the {\bf Sch\"onflies relation} on a planar compactum $K$ is a reflexive symmetric relation. Under this relation, two points $x_1, x_2$ are related provided that either $x_1=x_2$ or there exist two disjoint Jordan curves $J_i\ni x_i$ such that  $\overline{U}\cap K$ has infinitely many components $P_n$, intersecting $J_1$ and $J_2$ both, whose limit under Hausdorff distance contains $\{x_1,x_2\}$. Here $U$ is the component of \ $\hat{\bbC}\setminus(J_1\cup J_2)$ with $\partial U=J_1 \cup J_2$.

Given a compactum $K\subset\hat{\bbC}$,  denote by $R_K$ the  Sch\"onflies relation on $K$ and by $\overline{R_K}$ the closure of $R_K$. We also call $\overline{R_K}$ the {\bf closed Sch\"onflies relation}.
Let $\Dc_K$ be the finest upper semi-continuous decompositions of $K$ into sub-continua that splits none of the fibers $R_K[x]$. Then $\Dc_K$ coincides with $\Dc_K^{PC}$, the core decomposition of $K$ with Peano quotient \cite[Theorem 7]{LLY-2019}. Therefore the elements of $\mathcal{D}_K$ are the (order-one) atoms of $K$.

Every fiber $\overline{R_K}[x]$ is a continuum. See 
\cite[Theorem 1.4]{LYY-2020}. However, the compactness and connectedness of the fibers of $R_K$ remain open.
Moreover, in order that a point $y\ne x$ lies  in $\overline{R_K}[x]$ it is necessary and sufficient that for small enough  $r>0$ the difference $K\setminus(B_r(x)\cup B_r(y))$ has infinitely many components that intersect each of the circles $\partial B_r(x)$ and $\partial B_r(y)$. See  \cite[Theorem 1.3]{LYY-2020}.
The lemma below relates the fibers of $\overline{R_K}^2$ to those of $\overline{R_L}$, where $L$ is a compact subset of $K$ satisfying certain properties.

\begin{lemma}\label{key-lemma}
Given a compactum $K\subset\hat{\mathbb{C}}$ and  a component $U$ of \ $\hat{\bbC}\setminus K$. If $L\subset\partial U$ is compact then $\overline{R_L}[x]\subset \overline{R_K}^2[x]$  for any $x\in L$. 
\end{lemma}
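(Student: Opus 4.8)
The plan is to work directly from the characterization of the closed Schönflies relation recalled just above the lemma: for $x \neq y$, one has $y \in \overline{R_K}[x]$ if and only if, for all small enough $r>0$, the difference $K \setminus (B_r(x) \cup B_r(y))$ has infinitely many components meeting both circles $\partial B_r(x)$ and $\partial B_r(y)$. Fix $x \in L$ and a point $y \in \overline{R_L}[x]$; I may assume $y \neq x$, since the reflexive case is trivial. Applying the characterization to the compactum $L$, for every small $r>0$ there are infinitely many components of $L \setminus (B_r(x) \cup B_r(y))$ crossing from $\partial B_r(x)$ to $\partial B_r(y)$. The goal is to produce a point $z \in K$ with $y \in \overline{R_K}[z]$ and $z \in \overline{R_K}[x]$, which is exactly what $y \in \overline{R_K}^2[x]$ demands.

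The key geometric input is that $L \subset \partial U$ for a single complementary component $U$ of $K$. Each of the infinitely many crossing continua in $L$ is a piece of $\partial U$, so it is accessible from $U$; between consecutive crossing pieces sit channels of $U$ reaching deep toward the segment joining $x$ and $y$. My first step is to exploit these channels: I would pick the infinitely many crossing continua $Q_n \subset L$ and, because they accumulate (by compactness of the hyperspace of subcontinua, passing to a subsequence) onto a limit continuum $Q_\infty \subset L \subset K$ joining the two circles, select a point $z \in Q_\infty$. The natural candidate is a limit point of the $Q_n$ lying strictly between the two balls. I then need to verify the two relations separately, and this is where the relation being on $K$ rather than on $L$ buys room: the crossing pieces $Q_n$, together with the boundary arcs of the intervening channels of $U$, furnish infinitely many components of $K \setminus (B_r(x) \cup B_r(z))$ separating $x$ from $z$, and likewise infinitely many components separating $z$ from $y$, giving $z \in \overline{R_K}[x]$ and $y \in \overline{R_K}[z]$.

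The main obstacle is passing from crossing continua in $L$ (which only guarantee structure relative to the full segment $x$-to-$y$) to the \emph{localized} infinitely-many-components condition for the two sub-relations $(x,z)$ and $(z,y)$ at \emph{every} small radius. A single family $\{Q_n\}$ crossing between $\partial B_r(x)$ and $\partial B_r(y)$ does not automatically split into two independent infinite families when one inserts an intermediate ball $B_s(z)$; some $Q_n$ may fail to reach $B_s(z)$. The honest fix is to choose $z$ as an accumulation point of the $Q_n$ themselves, so that cofinitely many $Q_n$ pass arbitrarily close to $z$ and hence genuinely cross each small circle $\partial B_s(z)$; then each $Q_n$ decomposes, relative to $B_s(z)$, into a sub-piece crossing $\partial B_r(x)$–$\partial B_s(z)$ and one crossing $\partial B_s(z)$–$\partial B_r(y)$, and at least one of these two families stays infinite after refining. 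I expect to handle the bookkeeping by a diagonal/pigeonhole argument over a sequence $r_k \downarrow 0$, and to use the original Schönflies formulation (with Jordan curves $J_1, J_2$ and the channel component $U$) rather than the ball formulation wherever the accessibility of $Q_\infty$ from $U$ must be invoked. Once $z$ is fixed with both relations holding for all small radii, the characterization gives $z \in \overline{R_K}[x]$ and $y \in \overline{R_K}[z]$, hence $y \in \overline{R_K}^2[x]$, completing the proof.
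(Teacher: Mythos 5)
Your overall plan --- fix $y\in\overline{R_L}[x]$, extract crossing continua $Q_n$ of $L$ in the annulus between balls around $x$ and $y$, pass to a limit continuum, and manufacture a middle point $z$ with $(x,z),(z,y)\in\overline{R_K}$ --- matches the skeleton of the paper's argument, but the step that actually produces the relations in $\overline{R_K}$ has a genuine gap. You claim that the $Q_n$, or their sub-pieces relative to an intermediate ball $B_s(z)$, ``together with the boundary arcs of the intervening channels of $U$,'' furnish infinitely many components of $K\setminus\left(B_r(x)\cup B_s(z)\right)$ crossing the relevant circles. This is unjustified: the $Q_n$ are components of $L$ minus the balls, but $K\supset L$ may glue all of them together inside the annulus (and the boundary arcs of the channels lie in $\partial U\subset K$, so they can be glued as well), so infinitude of crossing components in $L$ gives no infinitude in $K$. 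Indeed, if your mechanism were sound, you could apply it to the full continua $Q_n$ crossing from $\partial B_r(x)$ to $\partial B_r(y)$ and conclude $y\in\overline{R_K}[x]$ outright; the lemma asserts only the two-step relation $\overline{R_K}^2[x]$, and this weakening is precisely the symptom of the gluing problem. The paper's way around it is to route the witnesses through the \emph{complement} of $K$: since $L\subset\partial U$, one runs arcs $\beta_i$ inside $U$ toward points of the even-indexed crossing continua; these arcs are disjoint from $K$ (here $U\cap K=\emptyset$ is essential) and are separated from one another by the odd-indexed continua, which do lie in $K$, so they yield infinitely many distinct components of the auxiliary annulus minus $K$, which Lemma \ref{bridging_lemma} converts back into crossing components of $K$ itself. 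Your proposal never invokes this one property of $U$ that makes the lemma true.

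There is a second, independent gap: your pigeonhole (``at least one of these two families stays infinite after refining'') delivers at most one of the two relations at each scale, and possibly different ones along different subsequences, so it cannot certify a single $z$ with $z\in\overline{R_K}[x]$ and $y\in\overline{R_K}[z]$ simultaneously for all small radii. (A smaller problem of the same kind: after the excised ball around $y$ is replaced by $B_s(z)$, sub-pieces of distinct $Q_n$ may merge even inside $L$, through the region near $y$ that is no longer removed.) The paper resolves this with a dichotomy-plus-connectedness argument that your draft lacks: working in the square $\overline{A_n\setminus\alpha^*}$ obtained by cutting the annulus along a thickened arc in $U$, it shows that every point $z$ of the limit continuum $P_\infty$ off the boundary circles satisfies $z\in\overline{R_K}[x_n]\cup\overline{R_K}[y_n]$ for specific anchor points $x_n\in l_1$, $y_n\in l_2$ of $P_\infty$; then, since $P_\infty$ is the union of the closed sets $E_n=P_\infty\cap\left(\overline{R_K}[x_n]\cup l_1\right)$ and $F_n=P_\infty\cap\left(\overline{R_K}[y_n]\cup l_2\right)$, connectedness of $P_\infty$ forces a common point $w$, which serves as the middle of the chain (or degenerates to a one-step relation when $w$ lies on a circle); finally closedness of $\overline{R_K}^2$ is used to let $n\to\infty$. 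Some such connectedness argument is indispensable; choosing $z$ merely as an accumulation point of the $Q_n$ does not supply it.
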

\begin{proof}
To obtain the containment $\overline{R_L}[x]\subset \overline{R_K}^2[x]$ for any given $x\in L$,  we may fix an arbitrary point $y\in\overline{R_L}[x]\setminus\{x\}$ and consider the annulus $A_n=\hat{\mathbb{C}}\setminus\left(B_{1/n}(x)\cup B_{1/n}(y)\right)$, for any integer $n\ge1$ such that $\overline{B_{1/n}(x)}\bigcap\overline{B_{1/n}(y)}=\emptyset$. Here $B_{1/n}(x)$ and $B_{1/n}(y)$ are open disks with radius $1/n$ under spherical distance, that are respectively centered at $x$ and $y$. By \cite[Theorem 1.3]{LYY-2020}, $A_n\cap L$ has infinitely many components intersecting both $\partial B_{1/n}(x)$ and $\partial B_{1/n}(y)$. So we can find an infinite sequence $\{P_i\}$ of such components that converge to some continuum $P_\infty$ under Hausdorff metric.

Since $x,y\in L\subset\partial U$, we can find an open arc $\alpha_0\subset U$ that connects a point on $\partial B_{1/n}(x)$ to one on $\partial B_{1/n}(y)$. Going to an appropriate sub-arc, if necessary, we may assume that $\alpha\subset A_n$. Then, we may slightly thicken the closed arc $\overline{\alpha}$ and obtain a topological disc $\alpha^*\subset A_n$, satisfying $\alpha^*\cap K=\emptyset$. From this we see that $\overline{A_n\setminus\alpha^*}$ is homeomorphic to $[0,1]^2$. We will obtain the following.

{\bf Claim}. $P_\infty$ contains two points $u_n\in \partial B_{1/n}(x), v_n\in \partial B_{1/n}(y)$ with $v_n\in\overline{R_K}^2[u_n]$.

The flexibility of the large enough integers $n$ ensures that $\lim\limits_nu_n=x$ and $\lim\limits_nv_n=y$. Since $\overline{R_K}^2$ is a closed relation, we surely obtain $y\in \overline{R_K}^2[x]$.  This completes our proof. Thus, the remaining issue is to verify the above claim.

As $\overline{A_n\setminus\alpha^*}$ is a topological disc, we consider it to be the unit square $[0,1]^2$. Moreover, we may represent by $[0,1]\times\{1\}$  the arc $l_1=\overline{A_n\setminus\alpha^*}\cap\partial B_n(x)$ and by $[0,1]\times\{0\}$ the arc $l_2=\overline{A_n\setminus\alpha^*}\cap\partial B_n(y)$. Fix any point $z$ in $P_\infty\cap(0,1)^2$. For any $r>0$ that is small, let $W_r$ denote the open rectangle centered at  $z$ with diameter $r$.
Since $P_i\rightarrow P_\infty$ under Hausdorff distance we may assume that every $P_i$ intersects $W_r$ and lies in $[0,1]^2$, which from now on represents $\overline{A_n\setminus\alpha^*}$.
See Figure \ref{key}. 
\begin{figure}[ht]
\vskip -0.5cm
\center{
\begin{tikzpicture}[scale=0.8,x=1.618cm, y=0.618cm]
\draw(-2,0)--(-2,7);
\draw(-2,0)--(7,0)node[below]{$l_2\subset \partial B_n(y)$};
\draw(-2,7)--(7,7)node[above]{$l_1\subset \partial B_n(x)$};
\draw(7,0)--(7,7);
\draw(2,0)--(2,7)node[above]{$P_\infty$};
\fill(2,3.5)circle(2pt);
\draw[blue,thick](-1,2)--(5,2) -- (5,5) -- (-1,5) -- (-1,2);
\draw(2,3.5) node[left]{$W_r\ni z$};
\draw(3,0)--(3,7)node[above]{$P_{2i+1}$};
\draw(4.5,0)--(4.5,7)node[above]{$P_{2i-1}$};
\draw[dashed](3.75,0)--(3.75,7);
\draw (3.75,0)node[below]{$P_{2i}$};
\draw(3.75,3) node[left]{$a_i$};
\fill(3.75,3)circle(2pt);
\draw(4,4) node[right]{$b_i$};
\fill(4,4)circle(2pt);
\draw[red](4.25,5)--(4.25,7); \draw(4.3,6) node[left]{$\beta_i$};
\draw[red](4.25,2)--(4.25,0); \draw(4.3,1) node[left]{$\beta_i$};
\end{tikzpicture}
}\vskip -0.75cm
\caption{Relative locations of $z,l_1,l_2,W_r, P_{2i-1}, P_{2i}, P_{2i+1}$ and $a_i, b_i$.}\label{key}
\vskip -0.25cm
\end{figure}
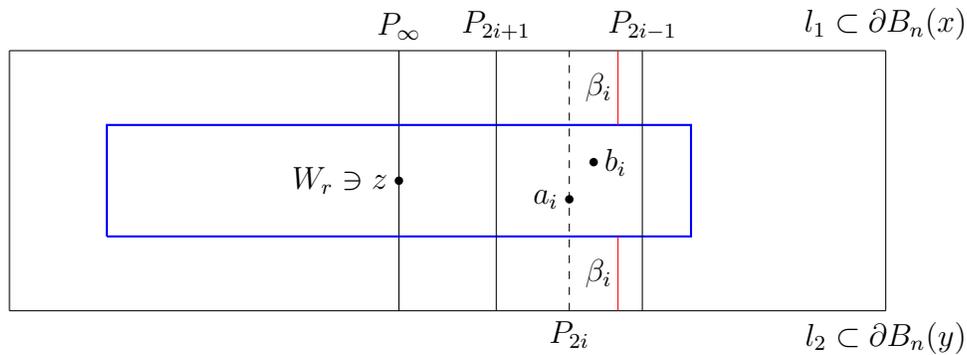

Recall that $[0,1]^2\setminus P_\infty$ has two components, one containing $\{1\}\times[0,1]$ and the other $\{0\}\times[0,1]$. One of these components contains infinitely many $P_i$. Without losing generality we may assume that every $P_i$ lies in the one containing $\{1\}\times[0,1]$, denoted  $V$. Thus $P_i$ can be connected to $\{1\}\times[0,1]$ by an arc in $[0,1]^2$ that does not intersect $P_\infty$. Moreover, rename $P_i(i\ge1)$ so that every $P_i$ can be connected to $\{1\}\times[0,1]$ by an arc in $[0,1]^2$ that does not intersect $P_j$ for $j\ge i+1$. Therefore, each $P_i$ is ``to the right of'' $P_{i+1}$.

For all $i\ge1$ let $V_i$ be the unique component of $\hat{\bbC}\setminus\left(P_{2i-1}\cup P_{2i+1}\cup l_1\cup l_2\right)$ whose boundary intersects each of $l_1$,  $l_2$, $P_{2i-1}$ and $P_{2i+1}$. Then $P_{2i}\subset \overline{V_i}$ for $i\ge1$. For the previously given point $z$ in $P_\infty\cap(0,1)^2$, we can find for each $i\ge1$ a point  $a_i\in P_{2i}\cap W_r$ such that $\lim\limits_{i\rightarrow\infty}a_i=z$. Since $P_{2i}\subset L\subset \partial U$, we further find a point $b_i\in (W_r\cap V_i\cap U)$ for every $i\ge1$, such that the distance between $a_i$ and $b_i$ converges to zero as $i\rightarrow\infty$. Check Figure \ref{key} for relative locations of $a_i\in P_{2i+1}$ and $b_i\in(W_r\cap V_i\cap U)$.

Now, we may find arcs $\alpha_i\subset U$ for each $i\ge1$ that starts from a fixed point $b_0\in U$ and ends at $b_i$. Let $c_i$ be the last point on $\alpha_i$ that leaves $\partial[0,1]^2$. Let $d_i$ be the first point on $\alpha_i$  after $c_i$ at which $\alpha_i$ intersects $\partial W_r$. Clearly, we have $c_i\in(l_1\cup l_2)$. Let $\beta_i$ be the sub-arc of $\alpha_i$ from $c_i$  to $d_i$. Check Figure \ref{key} for a rough depiction of two possible locations for $\beta_i$. Then $\beta_i$ and $\beta_j$ for $i\ne j$ are contained in distinct components of $\mathcal{A}_r\setminus L$, where $\mathcal{A}_r=[0,1]^2\setminus W_r$ is topologically a closed annulus.
Since $L\subset K$ and $K\cap U=\emptyset$, the arcs $\beta_i$ and $\beta_j$ for $i\ne j$ are contained in distinct components of $\mathcal{A}_r\setminus K$.

Let $x_n$ be the only point on $l_1\cap P_\infty$ such that the right piece of $l_1\setminus\{x_n\}$ does not intersect $P_\infty$. Let $y_n$ be the point on $l_2\cap P_\infty$ such that the right piece of $l_2\setminus\{y_n\}$ does not intersect $P_\infty$. The sequence $\{c_i\}$ then has a limit point in $\{x_n,y_n\}$. We may assume that $z_r=\lim\limits_{i\rightarrow\infty}d_i$ for some point $z_r\in\partial W_r$. Since $\partial[0,1]^2$ and $\partial W_r$ are disjoint Jordan curves, from the choices of $x_n, y_n$ and $z_r$ we can infer that either $(x_n,z_r)\in R_K$ or  $(y_n,z_r)\in R_K$. The flexibility of $r>0$ then leads to the inclusion $z\in\left(\overline{R_K}[x_n]\cup\overline{R_K}[y_n]\right)$.

Now consider the two closed sets $E_n=P_\infty\cap \left(\overline{R_K}[x_n]\cup l_1\right)$ and $F_n=P_\infty\cap \left(\overline{R_K}[y_n]\cup l_2\right)$, which satisfy $P_\infty=E_n\cup F_n$. From the connectedness of $P_\infty$ we see that $E_n\cap F_n\ne\emptyset$. Clearly, each point $w\in (E_n\cap F_n)$ necessarily falls into one of the following cases:
\begin{itemize}
\item[(1)] $w$ lies in $l_1\subset\partial B_{1/n}(x)$ and belongs to $\overline{R_K}[y_n]$,
\item[(2)] $w$ lies in $l_2\subset\partial B_{1/n}(y)$  and belongs to $\overline{R_K}[x_n]$,
\item[(3)] $w\notin(l_1\cup l_2)$ and it lies in $\overline{R_K}[x_n]\cap\overline{R_K}[y_n]\cap(0,1)^2$.
\end{itemize}
In case (1) we set $u_n=w,v_n=y_n$; in case (2) we set $u_n=x_n, v_n=w$; in case (3) we set $u_n=x_n, v_n=y_n$. Then, in cases (1) and (2) we have $v_n\in\overline{R_K}[u_n]\subset\overline{R_K}^2[u_n]$; and in case (3) we will have $v_n\in\overline{R_K}^2[u_n]$. This verifies the claim and completes our proof.
\end{proof}

With Lemma \ref{key-lemma}, we are well prepared to prove Theorems \ref{compare_atoms} and \ref{lambda_inequality} as follows.

\begin{proof}[{\bf Proof for Theorem \ref{compare_atoms}}]
Since $U$ is also a complementary component of $\partial U$, we only verify that every atom of $L$ is contained in a single atom of $K$.

To this end, let $\Dc_L^\#$ consist of all those continua that are each a component of $d^*\cap L$ for some $d^*\in\Dc_K$. By \cite[p.44, Theorem 3.21]{Nadler92} and \cite[p.278, Lemma 13.2]{Nadler92}, we see that $\Dc_L^\#$ is an upper semi-continuous decomposition of $L$. As  every fiber of $\overline{R_K}^2$ is entirely contained in a single element of $\Dc_K$, by Lemma \ref{key-lemma} we know that every fiber $\overline{R_L}[z]$ is entirely contained in a single element of $\Dc_L^\#$. This implies that $\Dc_L^\#$ is refined by $\Dc_L$. In other words, every atom of $L$ is entirely contained in a single atom of $K$.
\end{proof}

\begin{proof}[{\bf Proof for Theorem \ref{lambda_inequality}}]
To obtain $\lambda_L(x)\le\lambda_K(x)$ for all $x$, we only need to consider the points $x\in L$. With no loss of generality, we may assume that $\lambda_K(x)=m-1$ for some integer $m\ge1$. That is to say, there exist strictly decreasing continua $d_1^*\supset d_2^*\supset\cdots\supset d_{m}^*=\{x\}$ such that $d_1^*$ is an atom of $K$ and $d_{i+1}^*$ an atom of $d_i^*$ for $1\le i\le m-1$. Here we may have $m=1$. By Theorem \ref{compare_atoms}, the atom of $L$ containing $x$, denoted as $d_1$, is a subset of $d_1^*$. Since $d_1\subset d_1^*$ also satisfies the assumptions of  Theorem \ref{compare_atoms}, we can infer that the atom of $d_1$ containing $x$, denoted as $d_2$, is a subset of $d_2^*$. Repeating the same argument for $m$ times, we obtain for $1\le i\le m$ an order-$i$ atom $d_i$ of $L$ with $d_i\subset d_i^*$. Here we have $d_{m}=\{x\}$ and hence $\lambda_L(x)\le m=\lambda_K(x)$.
\end{proof}
\begin{rem}
In the proof for Theorem \ref{lambda_inequality}, we know that $U$ is a component of $\hat{\bbC}\setminus K$ and $L\subset\partial U$. Therefore, in the same way we can show that $\lambda_L(x)\le\lambda_{\partial U}(x)$ for all $x$. From this we can infer that
$\sup\limits_U\lambda_{\partial U}(x)=\tilde{\lambda}_K(x)$ for all $x\in\hat{\mathbb{C}}$.
\end{rem}

\section{On Lambda Equalities}\label{equality}

We prove Theorem \ref{equality-case}, establishing three equalities in terms of the lambda function. Two of these equalities are for $E$-compacta. The other one is for partially unshielded compacta.

Given an $E$-compactum  $K\subset\hat{\mathbb{C}}$ with complementary components $U_1,U_2,\ldots$, so that the diameters  $\delta(U_i)$ either form a finite sequence of an infinite one converging to zero. The Torhorst Inequality requires that $\sup\limits_i\lambda_{\partial U_i}(x)\le\lambda_K(x)$ for all $x\in\hat{\mathbb{C}}$ and for all $i\ge1$. Since $\lambda_K(x)=\tilde{\lambda}_K(x)=0$ for all $x\in K^o\cup\left(\bigcup\limits_iU_i\right)$, we only need to consider the points on $\partial K$, which may not equal $\bigcup\limits_i\partial U_i$.

Lemma \ref{bridging_lemma} follows from \cite[Lemma 3.3]{LLY-2019} and is useful when we prove Lemma \ref{trivial-fiber}.
\begin{lemma}\label{bridging_lemma}
If $A\subset\hat{\bbC}$ is a closed topological annulus and $K\subset\hat{\bbC}$ a compactum then the following statements are equivalent: (1) $A\cap K$ has infinitely many components intersecting each of the two components of $\partial A$; (2) $A\setminus K$ has infinitely many components intersecting each of the two components of $\partial A$.
\end{lemma}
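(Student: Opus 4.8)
The plan is to deduce both implications from a single planar ``crossing dichotomy'' for a topological square, which is exactly the kind of separation statement furnished by \cite[Lemma 3.3]{LLY-2019}: if $D$ is a closed topological disc whose boundary circle is split into four successive arcs (a left side $S_\ell$, a top $S_t$, a right side $S_r$ and a bottom $S_b$) and $E\subset D$ is closed, then exactly one of the following holds --- either $E$ has a component meeting both $S_\ell$ and $S_r$, or $D\setminus E$ has a connected subset meeting both $S_t$ and $S_b$. Write $\partial A=C_0\cup C_1$ for the two boundary circles of the annulus. Since the two conditions in the statement are of the same nature once one passes between the closed set $K\cap A$ and the open-in-$A$ set $A\setminus K$, it suffices to prove $(1)\Rightarrow(2)$ in detail; the reverse implication follows by the same scheme after replacing the crossing components of $A\setminus K$ by genuine crossing arcs contained in them (each component of $A\setminus K$ meeting $C_0$ and $C_1$ is an open, hence arcwise connected, subset of the surface-with-boundary $A$, so it contains an arc joining $C_0$ to $C_1$).

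Assume $(1)$, so $K\cap A$ has infinitely many components $P_1,P_2,\dots$ each meeting both $C_0$ and $C_1$. First I would cut the annulus open: removing two of these crossing continua leaves a region whose closure $G$ is a topological disc meeting both $C_0$ and $C_1$ and containing infinitely many of the remaining $P_i$; I regard $\overline{G}$ as a square whose left and right sides lie on the two chosen continua and whose bottom and top arcs lie on $C_0$ and $C_1$. Inside this square the surviving crossing continua $Q_1,Q_2,\dots$ are pairwise disjoint continua each joining the bottom to the top, so they inherit a left-to-right linear order, since any continuum joining the bottom and top of a square separates its left side from its right side (again by the dichotomy). Now take the gap $M$ between two consecutive members $Q_i,Q_j$: it is a sub-square with left and right sides $Q_i$ and $Q_j$. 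A component of $K\cap\overline{M}$ meeting both $Q_i$ and $Q_j$ would, together with $Q_i\cup Q_j$, be a connected subset of $K$ joining the distinct $K$-components $Q_i$ and $Q_j$, which is impossible. Hence the first alternative of the dichotomy fails on $M$, so $\overline{M}\setminus K$ contains a connected set meeting the top and the bottom of $M$, i.e.\ a subset of $A\setminus K$ meeting both $C_1$ and $C_0$. Carrying this out on the disjoint gaps between $Q_1,Q_2$, between $Q_3,Q_4$, and so on, produces infinitely many such crossing subsets of $A\setminus K$; any two of them are separated inside $\overline{G}$ by an intervening $Q_k$ (a continuum contained in $K$ joining bottom to top, which separates left from right and which the complement cannot cross), so they lie in distinct components of $A\setminus K$. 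This gives infinitely many crossing components of $A\setminus K$, which is $(2)$.

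The implication $(2)\Rightarrow(1)$ runs along identical lines with the roles of $K$ and its complement interchanged: from infinitely many crossing components of $A\setminus K$ one extracts infinitely many pairwise disjoint crossing arcs $\gamma_1,\gamma_2,\dots$, each missing $K$, cuts the annulus into a square along two of them, orders the rest, and on each gap applies the dichotomy --- this time a connected subset of $A\setminus K$ joining $\gamma_i$ to $\gamma_j$ is excluded because it would merge the distinct complementary components containing $\gamma_i$ and $\gamma_j$, so the other alternative yields a component of $K$ meeting both $C_0$ and $C_1$; distinctness again follows since the separating arcs $\gamma_k$ miss $K$. I expect the main obstacle to be the topological bookkeeping of the cutting step rather than the counting: one must justify, for a pair of disjoint crossing continua (which may be wild, comb-like sets rather than arcs), that their removal really leaves a ``substantial'' region whose closure is a disc carrying the four-sided structure and absorbing all the other crossing continua, and that a crossing continuum inside a disc genuinely separates its two opposite sides. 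These are exactly the separation facts I would import from \cite[Lemma 3.3]{LLY-2019} together with the unicoherence of the disc, and obtaining them in the precise form needed --- uniformly over arbitrary compacta, not just nicely embedded arcs --- is the delicate point.
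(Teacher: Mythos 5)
You should know at the outset that the paper contains no internal proof to compare against: Lemma 4.1 is obtained there as a direct consequence of the citation \cite[Lemma 3.3]{LLY-2019}, so your reconstruction stands or falls on its own. Its $(2)\Rightarrow(1)$ half is essentially sound: crossing components of $A\setminus K$ are open in $A$, hence arcwise connected, so you really can replace them by pairwise disjoint crossing arcs missing $K$; cutting the annulus along such arcs produces genuine four-sided discs, every quadrilateral you invoke has arcs for sides, and your two exclusion arguments (a connected subset of $A\setminus K$ joining $\gamma_i$ to $\gamma_j$ would merge distinct complementary components; a connected subset of $K$ cannot cross an arc $\gamma_k$ disjoint from $K$) are correct.

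The genuine gap is in $(1)\Rightarrow(2)$, exactly at the step your closing sentence flags and then leaves unproven. Components of $A\cap K$ may be wild (comb-like) continua, and the closure $\overline{G}$ of a complementary region of two such continua is in general \emph{not} a topological disc --- its boundary need not be locally connected --- so neither ``regard $\overline{G}$ as a square whose left and right sides lie on the two chosen continua'' nor the later application of the four-arc dichotomy to the gap $\overline{M}$ with ``sides'' $Q_i,Q_j$ is licensed: the dichotomy you quote requires the four sides to be arcs of the boundary circle of a disc, and this failure is the entire content of the hard direction, not bookkeeping. The hole can be filled, but it needs ideas absent from your text: first produce one honest crossing arc of $A\setminus K$ (two distinct crossing components of $A\cap K$ already preclude a continuum of $K\cap A$ separating $C_0$ from $C_1$, so the annulus form of the dichotomy yields a complementary crossing) and cut along it to get a genuine square $D$ whose vertical sides miss $K$; then, for $Q_i\prec Q_j$, apply the dichotomy in $D$ itself to the padded closed set $F=X_i\cup Y_j\cup(K\cap D)$, where $X_i=D\setminus V_i^{r}$ and $Y_j=D\setminus V_j^{\ell}$ are closed, connected and disjoint, so that $D\setminus F$ is exactly the gap minus $K$. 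The crux is then to show that no component of $F$ meets both vertical sides; this reduces --- via the trichotomy that each component of $K\cap D$ lies wholly in $X_i$, in $Y_j$, or in the gap, plus an irreducible subcontinuum and boundary bumping --- to the impossibility of a continuum in $K$ joining the distinct components $Q_i$ and $Q_j$. Invoking ``unicoherence of the disc'' does not substitute for this argument, and two further small repairs are needed: a countable family of crossing continua need not contain \emph{consecutive} members (the separation order can be dense; take an infinite monotone chain and use the gaps between successive chain elements), and your left-to-right order itself requires the observation that a crossing continuum $Q_j$ cannot lie in a ``pocket'' component of $D\setminus Q_i$ (an arc in the pocket from top to bottom has a side disjoint from $Q_i$ containing a vertical side of $D$, pinning the pocket to $V_i^{\ell}$ or $V_i^{r}$). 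As written, then, the proposal is a plausible scheme with its central step missing rather than a proof.
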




\begin{lemma}\label{trivial-fiber}
Given an $E$-compactum  $K\subset\hat{\mathbb{C}}$ with complementary components $U_1,U_2,\ldots$. If $\overline{R_K}[x]$ contains a point $y\ne x$ then $y\in\overline{R_{\partial U_i}}[x]$ for some $i$.
\end{lemma}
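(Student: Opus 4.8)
The plan is to translate the hypothesis $y\in\overline{R_K}[x]$ into a statement about the complementary components of $K$, single out one $U_i$ by a pigeonhole argument that uses the $E$-compactum property, and then transfer the separation behaviour from $U_i$ to its boundary $\partial U_i$. Write $A_r=\hat{\bbC}\setminus(B_r(x)\cup B_r(y))$, which is a closed annulus once $r$ is small enough that $\overline{B_r(x)}\cap\overline{B_r(y)}=\emptyset$. By the characterization \cite[Theorem 1.3]{LYY-2020}, the hypothesis (with $y\neq x$) says that for every small $r$ the set $A_r\cap K$ has infinitely many components meeting both circles $\partial B_r(x)$ and $\partial B_r(y)$, and Lemma \ref{bridging_lemma} converts this immediately into the dual statement that $A_r\setminus K$ has infinitely many components joining the two circles.

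First I would fix $r_0<\tfrac12 d(x,y)$ and note that any component of $A_r\setminus K$ joining the two circles (for $r<r_0$) is connected and avoids $K$, hence lies in a single complementary component $U_i$; moreover it has diameter at least $\operatorname{dist}(\partial B_r(x),\partial B_r(y))=d(x,y)-2r\ge d(x,y)-2r_0>0$. Since $K$ is an $E$-compactum, only finitely many complementary components have diameter exceeding $\tfrac12(d(x,y)-2r_0)$, and this finite set does not depend on $r$; all the joining components live in it. A pigeonhole over this finite set, applied along a sequence $r_n\downarrow 0$, then produces a single index $i$ for which $A_{r_n}\cap U_i$ has infinitely many components joining the circles for all $n$. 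Because $U_i$ is a component, hence a clopen subset, of $\hat{\bbC}\setminus\partial U_i$, these are exactly components of $A_{r_n}\setminus\partial U_i$, so Lemma \ref{bridging_lemma} applied to the compactum $\partial U_i$ shows that $A_{r_n}\cap\partial U_i$ has infinitely many components joining the circles. As these components touch $\partial B_{r_n}(x)$ and $\partial B_{r_n}(y)$ with $r_n\to 0$ while lying in $\overline{U_i}$, and $x,y\in K$ are disjoint from $U_i$, we also obtain $x,y\in\partial U_i$, so that $\overline{R_{\partial U_i}}[x]$ is meaningful.

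It remains to upgrade ``infinitely many joining components'' from the sequence $r_n$ to all small $r$, and I expect this to be the main obstacle. I would prove a monotonicity lemma for the compactum $\partial U_i$: if $r_n<r$ and $Q\subset\partial U_i$ is a subcontinuum avoiding $B_{r_n}(x),B_{r_n}(y)$ that meets both small circles, then, since $Q$ therefore meets the larger open disks $B_r(x)$ and $B_r(y)$, the compact set $Q\setminus(B_r(x)\cup B_r(y))$ has a component meeting both $\partial B_r(x)$ and $\partial B_r(y)$. The key point is a standard separation fact: if no component of a compactum met both of two disjoint closed sets, it could be split into two clopen pieces separating them, contradicting the connectedness of $Q$; spelling this out yields, from each $r_n$-level joining continuum, a distinct $r$-level joining continuum. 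Given any target $r<r_0$ I pick $r_n<r$ and conclude that $A_r\cap\partial U_i$ has infinitely many components joining the circles, for all small $r$. The characterization \cite[Theorem 1.3]{LYY-2020}, now applied to $\partial U_i$, then gives $y\in\overline{R_{\partial U_i}}[x]$, completing the argument.
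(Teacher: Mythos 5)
Your proof is correct, and while its first half coincides with the paper's, the endgame takes a genuinely different route. Both arguments begin identically: the criterion of \cite[Theorem 1.3]{LYY-2020} together with Lemma \ref{bridging_lemma} produces, for every small $r$, infinitely many components of $A_r\setminus K$ joining the two circles; each such component lies in a single complementary component and has diameter at least roughly $\rho(x,y)-2r$, so the $E$-compactum hypothesis confines them to a fixed finite list of $U_j$'s, and a double pigeonhole fixes one index $i$ along a sequence $r_n\downarrow 0$ (the paper runs the same pigeonhole, obtaining $i(n)$ for each $n$ and fixing $i$ only at the very end). From there the paper takes a Hausdorff limit $M_n\subset\partial U_{i(n)}$ of the closures of the joining components inside $U_{i(n)}$, picks $a_n\in M_n\cap\partial B_n(x)$ and $b_n\in M_n\cap\partial B_n(y)$, asserts $(a_n,b_n)\in R_{\partial U_{i(n)}}$ for the Sch\"onflies relation itself, and concludes from $c_n\to x$, $d_n\to y$ by the closedness of $\overline{R_{\partial U_i}}$; this deliberately sidesteps any need to verify the Theorem 1.3 criterion at $(x,y)$ for all small $r$. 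You instead apply Lemma \ref{bridging_lemma} a second time, to the compactum $\partial U_i$ (legitimately, by your observation that $U_i$ is clopen in $\hat{\mathbb{C}}\setminus\partial U_i$, so components of $A_{r_n}\cap U_i$ are components of $A_{r_n}\setminus\partial U_i$), and then upgrade from the sequence $r_n$ to all small $r$ via your monotonicity lemma, which rests on the standard wire-cutting fact that a compactum in which no component meets both of two disjoint closed sets admits a clopen partition separating them; the sufficiency direction of \cite[Theorem 1.3]{LYY-2020} applied to $\partial U_i$ then finishes. The one detail you leave implicit---that joining continua arising from distinct components of $A_{r_n}\cap\partial U_i$ land in distinct components of $A_r\cap\partial U_i$---is easily supplied: since $A_r\subset A_{r_n}$ for $r>r_n$, each component of $A_r\cap\partial U_i$ lies in a single component of $A_{r_n}\cap\partial U_i$. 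What your route buys is self-containedness: it never touches the relation $R_{\partial U_i}$ itself, whereas the paper's step $(a_n,b_n)\in R_{\partial U_{i(n)}}$ needs joining pieces of $\partial U_{i(n)}$ (not merely of $\overline{U_{i(n)}}$) accumulating on $\{a_n,b_n\}$, a point the paper passes over quickly; what it costs is the extra monotonicity lemma, which the paper's limit-of-related-pairs argument renders unnecessary. You also record explicitly that $x,y\in\partial U_i$, which the statement tacitly requires.
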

\begin{proof}
Let $\rho(x,y)$ be the spherical distance between $x$ and $y$. For each $n\ge2$ let $B_n(x)$ and $B_n(y)$ be the open disks of radius $2^{-n}\rho(x,y)$ that are centered at $x$ and $y$ respectively. Then $A_n=\hat{\mathbb{C}}\setminus\left(B_n(x)\cup B_n(y)\right)$ is a topological annulus. By \cite[Theorem 1.3]{LYY-2020}, the intersection $A_n\cap K$ has infinitely many components that intersect $\partial B_n(x)$ and $\partial B_n(y)$ both. By Lemma \ref{bridging_lemma}, the difference $A_n\setminus K$ has infinitely many components, say $\{P^n_j: j\ge1\}$, that intersect $\partial B_n(x)$ and $\partial B_n(y)$ both. Since the diameters of those $P^n_j$ are no less than $\rho(x,y)/2$ and since we assume $K$ to be an $E$-compactum, there is an integer $i(n)$ such that $U_{i(n)}$ contains infinitely many of those $P^n_j$. Here all those $P^n_j$ that are contained in $U_{i(n)}$ are each a component of $A_n\cap U_{i(n)}$.

Now, choose a subsequence $\{Q^n_k: k\ge1\}$ of $\{P^n_j: j\ge1\}$, with $Q_k^n\subset U_{i(n)}$, such that $\overline{Q^n_k}$ converges under Hausdorff distance to a continuum $M_n$. Then $M_n$ is a subset of $\partial U_{i(n)}$ and intersects  $\partial B_n(x)$ and $\partial B_n(y)$ both. Fixing any $a_n$ in $M_n\cap \partial B_n(x)$ and $b_n$ in $M_n\cap \partial B_n(y)$, we will have $(a_n,b_n)\in R_{\partial U_{i(n)}}$. Since $K$ is an $E$-compactum, there are infinitely many integers $n$ such that $i(n)$ takes the same value, say $i$. Therefore, we have two infinite sequences $\{c_n\}\subset\{a_n\}$ and $\{d_n\}\subset \{b_n\}$, with $c_n,d_n\in\partial U_i$, such that $(c_n,d_n)\in R_{\partial U_i}$ for all $n\ge2$. Since $\lim\limits_{n\rightarrow\infty}c_n=x$ and $\lim\limits_{n\rightarrow\infty}d_n=y$, we readily have $(x,y)\in\overline{R_{\partial U_i}}$, or equivalently $y\in\overline{R_{\partial U_i}}[x] $.
\end{proof}

Now we are well prepared to prove parts (i) and (ii) of Theorem \ref{equality-case}, whose results are respectively included in the next two propositions.

\begin{proposition}\label{equality-case-1}
If $K$ is an $E$-compactum such that $\tilde{\lambda}_K(x)=0$ for all $x\in\hat{\mathbb{C}}$ then $\lambda_K(x)$ vanishes everywhere.
\end{proposition}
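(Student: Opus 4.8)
The plan is to reduce the statement to a triviality property of the closed Sch\"onflies relation and then feed any hypothetical non-trivial fiber into Lemma \ref{trivial-fiber}. Recall that $\lambda_K\equiv 0$ is equivalent to $K$ being a Peano compactum, which in turn is equivalent to $\Dc_K$ being the decomposition into singletons. Since $\Dc_K$ is the finest upper semi-continuous decomposition of $K$ into subcontinua that splits none of the fibers $R_K[x]$, this decomposition is the singleton decomposition exactly when the splitting condition is vacuous, i.e.\ when $R_K[x]=\{x\}$ for every $x$; and as the diagonal is closed, this is the same as $\overline{R_K}[x]=\{x\}$ for every $x$. Thus for any compactum $L\subset\hat{\bbC}$ one has the dictionary
\[
\lambda_L\equiv 0 \iff L \text{ is a Peano compactum} \iff \overline{R_L}[x]=\{x\}\ \text{for all}\ x\in L .
\]
So it suffices to prove that $\overline{R_K}[x]=\{x\}$ for every $x\in K$.

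First I would unpack the hypothesis. Writing $U_1,U_2,\dots$ for the complementary components of $K$, the assumption $\tilde{\lambda}_K\equiv 0$ gives $\lambda_{\partial U_i}(x)\le\tilde{\lambda}_K(x)=0$ for every $i$ and every $x$, so each boundary $\partial U_i$ is a Peano compactum. By the dictionary above this says precisely that $\overline{R_{\partial U_i}}[x]=\{x\}$ for all $x$ and all $i$; that is, every closed Sch\"onflies relation $\overline{R_{\partial U_i}}$ is trivial.

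Then I would argue by contradiction. Suppose $K$ is not a Peano compactum; then $\overline{R_K}$ is non-trivial, so there are points $x\ne y$ with $y\in\overline{R_K}[x]$. Because $K$ is an $E$-compactum, Lemma \ref{trivial-fiber} applies and yields an index $i$ with $y\in\overline{R_{\partial U_i}}[x]$; in particular $x,y\in\partial U_i$ with $x\ne y$, so $\overline{R_{\partial U_i}}$ is non-trivial. This contradicts the previous paragraph. Hence $\overline{R_K}[x]=\{x\}$ for every $x$, so $K$ is a Peano compactum and $\lambda_K$ vanishes everywhere.

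Given that Lemma \ref{trivial-fiber} is already available, the remaining work is essentially bookkeeping, and the one place that deserves care is the dictionary in the first paragraph: one must make sure that trivial fibers of the closed Sch\"onflies relation really do force $\Dc_K$ to consist of singletons, rather than merely forcing each atom to contain no Sch\"onflies-related pair. The $E$-compactum hypothesis enters in exactly one spot, namely to invoke Lemma \ref{trivial-fiber}, which is what localizes any non-trivial fiber of $\overline{R_K}$ onto the boundary of a \emph{single} complementary component; without it the obstruction could be spread across infinitely many large complementary components and could not be pinned to any one $\partial U_i$.
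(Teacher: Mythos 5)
Your proof is correct and follows essentially the same route as the paper's: both deduce from $\tilde{\lambda}_K\equiv 0$ that every $\overline{R_{\partial U_i}}$ is trivial, invoke Lemma \ref{trivial-fiber} to conclude $\overline{R_K}[x]=\{x\}$ for all $x\in K$, and then pass from singleton fibers to singleton atoms and $\lambda_K\equiv 0$. Your contradiction framing and the explicit ``dictionary'' (triviality of $\overline{R_K}$ $\Leftrightarrow$ $\Dc_K$ consists of singletons $\Leftrightarrow$ $K$ is a Peano compactum) merely spell out steps the paper leaves implicit, and both are justified by the cited characterization of $\Dc_K$ as the finest upper semi-continuous decomposition splitting no fiber.
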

\begin{proof}
As $\tilde{\lambda}_K(x)$ vanishes everywhere, all the relations $\overline{R_{\partial U_i}}$ are trivial, in the sense that the fibers $\overline{R_{\partial U_i}}[x]$ are each a singleton for all $i$ and all $x\in\partial U_i$. Combing this with the conclusion of Lemma \ref{trivial-fiber}, we can infer that the fiber $\overline{R_K}[x]=\{x\}$ for all $x\in K$. From this, we see that every atom of $K$ is a singleton and that $\lambda_K(x)=0$ for all $x$.
\end{proof}

\begin{proposition}\label{equality-case-2}
Given an $E$-compactum $K$. If $\partial U_i\cap\partial U_j=\emptyset$ for $i\ne j$ then $\lambda_K=\tilde{\lambda}_K$.
\end{proposition}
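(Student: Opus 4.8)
The plan is to prove only the nontrivial inequality $\lambda_K(x)\le\tilde\lambda_K(x)$, since Theorem \ref{lambda_inequality} already supplies $\tilde\lambda_K\le\lambda_K$. The strategy is to manufacture one explicit decomposition of $K$ out of the atoms of the boundaries $\partial U_i$ and then to show that it is refined by the core decomposition $\mathcal{D}_K$. Concretely, I would define a partition $\mathcal{G}$ of $K$ whose members are (a) every atom of $\partial U_i$, for every $i$, and (b) the singleton $\{x\}$ for each $x\in K\setminus\bigcup_i\partial U_i$. The disjointness hypothesis $\partial U_i\cap\partial U_j=\emptyset$ for $i\ne j$ is exactly what makes $\mathcal{G}$ a well-defined partition into sub-continua, since no point can lie on two distinct boundaries.

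Next I would check that $\mathcal{G}$ is an upper semi-continuous decomposition into sub-continua that splits none of the fibers $R_K[x]$. For the fiber condition, fix $x$. If $x\notin\bigcup_i\partial U_i$, then Lemma \ref{trivial-fiber} forces $\overline{R_K}[x]=\{x\}$, so the fiber sits inside the element $\{x\}$. If $x\in\partial U_i$, disjointness makes this $i$ unique, and Lemma \ref{trivial-fiber} gives $\overline{R_K}[x]\subseteq\overline{R_{\partial U_i}}[x]$; since $\mathcal{D}_{\partial U_i}$ splits no fiber and its class relation is closed, $\overline{R_{\partial U_i}}[x]$ lies in a single atom $e$ of $\partial U_i$, whence $R_K[x]$ is not split by $\mathcal{G}$.

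The main obstacle will be the upper semi-continuity of $\mathcal{G}$, and this is the one place where the $E$-compactum hypothesis is indispensable. As $K$ is a compact metric space with compact decomposition elements, it suffices to show that the associated equivalence relation $E_{\mathcal G}=\Delta\cup\bigcup_i E_i$ is closed in $K\times K$, where $E_i$ is the (closed) class relation of the u.s.c. decomposition $\mathcal{D}_{\partial U_i}$. Given $(x_n,y_n)\to(x,y)$ in $E_{\mathcal G}$ with $x_n\ne y_n$, either some index repeats infinitely often, so closedness of that $E_i$ gives $(x,y)\in E_i$, or the relevant indices $i_n$ are eventually distinct. In the latter case $x_n,y_n$ lie in a common atom of $\partial U_{i_n}$, of diameter at most $\diam(U_{i_n})$; the $E$-compactum property forces $\diam(U_{i_n})\to0$, so $x=y\in\Delta$. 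Thus $E_{\mathcal G}$ is closed and $\mathcal{G}$ is u.s.c.

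Finally, because $\mathcal{D}_K$ is the \emph{finest} u.s.c. decomposition of $K$ into sub-continua splitting no fiber $R_K[x]$, it refines $\mathcal{G}$: every atom of $K$ lies in a single atom of some $\partial U_i$, or is a singleton off $\bigcup_i\partial U_i$. Combining this with Theorem \ref{compare_atoms}, which says every atom of $\partial U_i$ lies in a single atom of $K$, a short squeezing argument shows that each order-one atom of $K$ meeting some $\partial U_i$ coincides with an order-one atom $e$ of $\partial U_i$, while all remaining order-one atoms of $K$ are singletons. Since the higher-order atoms of $K$ inside such an $e$ are intrinsic to $e$ — that is, they are precisely the higher-order atoms of $\partial U_i$ inside $e$ — I conclude $\lambda_K(x)=\lambda_{\partial U_i}(x)$ for $x\in\partial U_i$ and $\lambda_K(x)=0$ otherwise. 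In either case $\lambda_K(x)\le\tilde\lambda_K(x)$, which together with Theorem \ref{lambda_inequality} yields $\lambda_K=\tilde\lambda_K$.
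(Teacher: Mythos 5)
Your proposal is correct and follows essentially the same route as the paper: you build the same candidate decomposition (atoms of the $\partial U_i$ plus singletons off $\bigcup_i\partial U_i$), use Lemma \ref{trivial-fiber} for the fiber containment, prove upper semi-continuity by showing the class relation is closed via the $E$-compactum hypothesis, and invoke minimality of $\mathcal{D}_K$ together with Theorem \ref{compare_atoms} to identify the atoms. Your only addition is that you spell out explicitly the squeezing argument and the intrinsic nature of higher-order atoms, steps the paper's proof of Proposition \ref{equality-case-2} and Theorem \ref{equal-cd} leaves implicit.
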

\begin{proof}
Let $\mathcal{D}_i$ denote the core decomposition of $\partial U_i$. Since we assume that $\partial U_i\cap\partial U_j=\emptyset$ for $i\ne j$, the collection
$\displaystyle \mathcal{D}_K^*:=\left(\bigcup\limits_i\mathcal{D}_i\right)\cup\left\{\{x\}: x\in K\setminus\left(\bigcup\limits_i\partial U_i\right)\right\}$
is a partition that divides $K$ into sub-continua. It suffices to show that $\Dc_K^*$ is the core decomposition of $K$.

Recall that $\Dc_K$ is the finest monotone decomposition such that every fiber of $\overline{R_K}$ is contained in a single element of $\Dc_K$. By Lemma \ref{key-lemma}, we know that $\Dc_K$ is refined by $\Dc_K^*$. On the other hand, since $K$ is an $E$-compactum and since $\partial U_i\cap\partial U_j=\emptyset$ for $i\ne j$, we can use Lemma \ref{trivial-fiber} to infer that every fiber of $\overline{R_K}$ is contained in a single element of $\Dc^*_K$. Therefore, we only need to verify that $\mathcal{D}^*_K$ is upper semi-continuous, which then indicates that $\Dc_K^*$ is a monotone decomposition hence is refined by $\Dc_K$.

In other words, we need to verify that the equivalence $\sim$ determined by the partition $\Dc_K^*$ is closed as a subset of $K\times K$. To this end, we consider an arbitrary sequence $\{(x_n,y_n): n\ge1\}$ in $K\times K$ with $\lim\limits_{n\rightarrow\infty}(x_n,y_n)=(x,y)$ such that $x_n\sim y_n$ for all $n\ge1$. There are two possibilities: either $x=y$ or $x\ne y$. In the first case, we have $(x,y)=(x,x)$, which is surely an element of $\sim$. In the second, the assumption that $K$ is an $E$-compactum implies that there is some $U_i$ such that $\{x_n,y_n\}\subset\partial U_i$ for infinitely many $n\ge1$. Consequently, the subset $\{x,y\}$ is contained in a single element of $\Dc_{i}$, which is a sub-collection of $\Dc_K^*$. That is to say, we have $x\sim y$. This ends our proof.
\end{proof}

The arguments in the above proof actually imply the following.
\begin{theo}\label{equal-cd}
Given an $E$-compactum $K$. If $\partial U_i\cap\partial U_j=\emptyset$ for $i\ne j$ then every atom of $K$ is either an atom of some $\partial U_i$ or a singleton $\{x\}$ with $x\in K\setminus\left(\bigcup_i\partial U_i\right)$.
\end{theo}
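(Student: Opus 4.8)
The plan is to read Theorem~\ref{equal-cd} directly off the decomposition already constructed in the proof of Proposition~\ref{equality-case-2}. Recall that, by the definition of the lambda function, the atoms of $K$ are precisely the elements of the core decomposition $\mathcal{D}_K=\mathcal{D}_K^{PC}$. So it suffices to describe the individual elements of $\mathcal{D}_K$ under the standing hypotheses, rather than to prove anything new about the quotient.

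First I would invoke the main conclusion reached inside the proof of Proposition~\ref{equality-case-2}: when $K$ is an $E$-compactum with $\partial U_i\cap\partial U_j=\emptyset$ for $i\ne j$, the explicit partition
\[
\mathcal{D}_K^*=\left(\bigcup_i\mathcal{D}_i\right)\cup\left\{\{x\}: x\in K\setminus\left(\bigcup_i\partial U_i\right)\right\},
\]
where $\mathcal{D}_i$ is the core decomposition of $\partial U_i$, is itself the core decomposition $\mathcal{D}_K$ of $K$. That proof already supplies the three ingredients needed for this identification: $\mathcal{D}_K^*$ is a partition of $K$ into sub-continua (using disjointness of the boundaries), it is upper semi-continuous and hence refined by $\mathcal{D}_K$, and it refines $\mathcal{D}_K$ via Lemma~\ref{key-lemma} together with Lemma~\ref{trivial-fiber}. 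Consequently $\mathcal{D}_K=\mathcal{D}_K^*$ as collections of subsets of $K$, not merely as abstract quotient spaces.

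With this set-level equality in hand the theorem is immediate: an atom of $K$ is an element of $\mathcal{D}_K=\mathcal{D}_K^*$, so by the very definition of $\mathcal{D}_K^*$ it is either a member of some $\mathcal{D}_i$ --- that is, by definition of the core decomposition of $\partial U_i$, an atom of $\partial U_i$ --- or a singleton $\{x\}$ with $x\in K\setminus\left(\bigcup_i\partial U_i\right)$. There is no genuinely hard step, since all the real work is done in Proposition~\ref{equality-case-2}; the only point deserving a word of care is that the two families making up $\mathcal{D}_K^*$ are disjoint and jointly exhaust $\mathcal{D}_K$, which is exactly what the hypothesis $\partial U_i\cap\partial U_j=\emptyset$ guarantees, so that each atom of $K$ falls into precisely one of the two stated cases.
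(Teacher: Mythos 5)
Your proposal is correct and matches the paper's intent exactly: the paper offers no separate argument for Theorem~\ref{equal-cd}, stating only that ``the arguments in the above proof actually imply'' it, and your write-up simply makes explicit what those arguments give, namely the set-level identity $\mathcal{D}_K=\mathcal{D}_K^*$ established in the proof of Proposition~\ref{equality-case-2} (mutual refinement of two partitions forces equality), from which the stated dichotomy for atoms is read off directly. Your added remark that the hypothesis $\partial U_i\cap\partial U_j=\emptyset$ is what makes $\mathcal{D}_K^*$ a genuine partition is a fair point of care, and nothing further is needed.
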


Now we go on to consider partially unshielded compacta and obtain Theorem \ref{equality-case}(iii).

\begin{deff}\label{part-unshielded}
Let $L\subset\hat{\mathbb{C}}$ be an unshielded compactum, which equals the boundary $\partial U$ of one of its complementary components $U$. A compactum $K$ formed by the union of $L$ with some complementary components of $L$ other than $U$ is called a {\bf partially unshielded compactum} determined by $L$.
\end{deff}

In order to find typical examples, one may set $L$ to be a polynomial Julia set, $U$ the unbounded Fatou component, and $K$ the union of $L$ and some bounded Fatou components. The next proposition discusses the relation between the atoms of any given compactum $L\subset\hat{\mathbb{C}}$ and those of a compactum $K$, where $K$  is  the union of $L$ with some (not all) components of $\hat{\bbC}\setminus L$.

\begin{proposition}\label{useful}
Given a planar compactum $L\subset \hat{\mathbb{C}}$ and a family $\{U_\alpha:\ \alpha\in I\}$  of  components of \ $\hat{\mathbb{C}}\!\setminus\!L$. If $\displaystyle K=L\cup\left(\bigcup_{\alpha\in I}U_\alpha\right)$ then  $\overline{R_K}$ is a subset of  $\{(z,z):\ z\in K\!\setminus\!L\}\cup\overline{R_L}$. Consequently, every atom of $K$ is either a singleton lying in $K\setminus L$ or a sub-continuum  of an atom of $L$.
\end{proposition}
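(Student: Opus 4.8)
The plan is to prove the relation inclusion $\overline{R_K}\subseteq\{(z,z):z\in K\setminus L\}\cup\overline{R_L}$ first, and then read off the statement about atoms by a finest--decomposition argument. Throughout I would use the characterization recalled above (\cite[Theorem 1.3]{LYY-2020}): for $y\ne x$ one has $y\in\overline{R_K}[x]$ precisely when, for all small $r>0$, the set $K\setminus(B_r(x)\cup B_r(y))$ has infinitely many components meeting both circles $\partial B_r(x)$ and $\partial B_r(y)$. The first move is to dispose of the interior points. Every point of $K\setminus L=\bigcup_{\alpha\in I}U_\alpha$ is interior to $K$, so for $x\in K\setminus L$ and $r$ small the whole circle $\partial B_r(x)$ lies in $K$; being connected, it sits inside a single component of $K\setminus(B_r(x)\cup B_r(y))$, so at most one component meets $\partial B_r(x)$ and the characterization fails for every $y\ne x$. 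Hence $\overline{R_K}[x]=\{x\}$ whenever $x\in K\setminus L$, and by symmetry of $\overline{R_K}$ every off-diagonal pair $(x,y)\in\overline{R_K}$ satisfies $x,y\in L$.

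For such $x,y\in L$, fix disjoint closed disks and put $A_r=\hat{\mathbb C}\setminus(B_r(x)\cup B_r(y))$, a closed annulus whose two boundary circles are $\partial B_r(x)$ and $\partial B_r(y)$. By Lemma \ref{bridging_lemma} the characterization for $K$ is equivalent to the statement that $A_r\setminus K$ has infinitely many components meeting both circles. The point is that filling in complementary components is transparent on the complement side. Indexing the complementary components of $L$ as $\{V_\beta:\beta\in J\}$ with $I\subseteq J$ and $U_\alpha=V_\alpha$, we get $\hat{\mathbb C}\setminus K=\bigsqcup_{\beta\in J\setminus I}V_\beta$, so that $A_r\setminus K=\bigsqcup_{\beta\notin I}(A_r\cap V_\beta)$ while $A_r\setminus L=\bigsqcup_{\beta\in J}(A_r\cap V_\beta)$. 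Each piece $A_r\cap V_\beta$ is relatively clopen in both differences, so the components of $A_r\setminus K$ are exactly the components of the pieces $A_r\cap V_\beta$ with $\beta\notin I$, a subcollection of the components of $A_r\setminus L$. In particular every component of $A_r\setminus K$ meeting both circles is itself a component of $A_r\setminus L$ meeting both circles. Thus infinitely many such components for $K$ force infinitely many for $L$, and applying Lemma \ref{bridging_lemma} to $L$ together with the characterization yields $y\in\overline{R_L}[x]$. This establishes the inclusion.

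To pass from the relation to the atoms, I would introduce the partition $\mathcal P$ of $K$ whose blocks are the atoms of $L$ together with the singletons $\{z\}$, $z\in K\setminus L$, and show that $\Dc_K$ refines it. Its blocks are subcontinua, and $\mathcal P$ is upper semi-continuous because its induced equivalence relation is closed: for a convergent sequence of related pairs $(a_n,b_n)$ one passes to a subsequence with either $a_n\in K\setminus L$ (so $a_n=b_n$ and the limit is diagonal) or $a_n\in L$ (so closedness reduces to the upper semi-continuity of $\Dc_L$). Moreover $\mathcal P$ splits no fiber $R_K[x]$: for $x\in K\setminus L$ the fiber is $\{x\}$, while for $x\in L$ the inclusion gives $R_K[x]\subseteq\overline{R_K}[x]\subseteq\overline{R_L}[x]$, which lies in a single atom of $L$, i.e.\ a single block of $\mathcal P$. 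Since $\Dc_K$ is the finest upper semi-continuous decomposition into subcontinua splitting no fiber of $R_K$, it refines $\mathcal P$; hence each atom of $K$ lies in one block, i.e.\ is a singleton in $K\setminus L$ or a subcontinuum of an atom of $L$.

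The heart of the argument, and the step I expect to be delicate, is the comparison in the second paragraph. Matching components of $A_r\cap K$ directly with those of $A_r\cap L$ is awkward, since one component of $A_r\cap K$ can absorb many components of $A_r\cap L$ through the filled regions, which a priori threatens to collapse infinitely many bridges to finitely many. Passing instead to the complements, where the filled components are relatively clopen and hence ``saturated,'' and transferring the count back with Lemma \ref{bridging_lemma}, is precisely what makes the infinitude of bridges carry over intact from $K$ to $L$.
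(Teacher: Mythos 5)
Your proposal is correct and follows essentially the same route as the paper: both rest on the observation that, since $\hat{\mathbb{C}}\setminus K$ is a union of complementary components of $L$, every component of $A_r\setminus K$ is also a component of $A_r\setminus L$, combined with Lemma \ref{bridging_lemma} and the characterization of $\overline{R_K}$ from \cite[Theorem 1.3]{LYY-2020} (you argue the transfer directly where the paper argues by contradiction, an immaterial difference). Your third paragraph merely makes explicit, via the partition $\mathcal{P}$ and the finest-decomposition property, the passage from the relation inclusion to the atom statement that the paper compresses into ``Thus the atom of $K$ containing $z$ is exactly the singleton $\{z\}$'' and ``Consequently.''
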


\begin{proof}
Since $\displaystyle K=L\cup\left(\bigcup_{\alpha\in I}U_\alpha\right)$, every point $z\in (K\setminus L)$ lies in some $U_\alpha$. Thus the atom of $K$ containing $z$ is exactly the singleton $\{z\}$. From this it readily follows that  every atom $d^*$ of $K$ that intersects $L$ is a sub-continuum of $L$. So we have $\overline{R_K}=\{(z,z):\ z\in K\!\setminus\!L\}\cup\left(\overline{R_K}\cap L^2\right)$. Therefore, we only need to show that $\left(\overline{R_K}\cap L^2\right)\subset \overline{R_L}$.

Indeed, if on the contrary there were some $(x,y)\in \overline{R_K}\cap L^2$ not belonging to $\overline{R_L}$ then, for any small enough number $r>0$, the difference $L\setminus (B_r(x)\cup B_r(y))$ would have finitely many components intersecting $\partial B_r(x)$ and $\partial B_r(y)$ both. Let $A_r=\hat{\mathbb{C}}\setminus (B_r(x)\cup B_r(y))$. By  Lemma \ref{bridging_lemma},   $A_r\setminus L$  has at most finitely many components that intersect $\partial B_r(x)$ and $\partial B_r(y)$ both. As we assume that  $\displaystyle K=L\cup\left(\bigcup_{\alpha\in I}U_\alpha\right)$, every component of $A_r\setminus K$ is also a component of $A_r\setminus L$. Thus $A_r\setminus K$ has at most finitely many components that intersect both $\partial B_r(x)$ and $\partial B_r(y)$. In other words, we have $(x,y)\notin\overline{R_K}$. This is absurd since we assume that $(x,y)\in \overline{R_K}$.
\end{proof}

There are other basic facts concerning an unshielded compactum $L$  and a partially unshielded compactum $K$ determined by $L$. Firstly, every interior point of $K$ lies in some complementary component of $L$; secondly, every boundary point of $K$ lies in $L$. Thus we always have $\partial K=L$; moreover, every atom of $K$ that intersects the interior $K^o$ is necessarily a singleton. Therefore, in order to determine the atoms of $K$ we only need to consider those of $L$.

\begin{theo}\label{part-2}
Let $L\subset\hat{\mathbb{C}}$ be  an unshielded compactum. Let $K$ be a partially unshielded compactum determined by $L$. Then every atom of $L$ is also an atom of $K$ and we have $\Dc_K=\Dc_L\cup\{\{x\}: x\in K\setminus L\}$.  Consequently, $\tilde{\lambda}_K(x)=\lambda_K(x)$ for all $x\in\hat{\mathbb{C}}$.
\end{theo}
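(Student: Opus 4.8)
The plan is to first establish the decomposition identity $\Dc_K=\Dc_L\cup\{\{x\}:x\in K\setminus L\}$ and then read the lambda equality off from it. The first thing I would record is that $U$, the distinguished complementary component of $L$ with $\partial U=L$, is also a component of $\hat{\bbC}\setminus K$. Indeed, writing $K=L\cup\bigcup_\alpha U_\alpha$ (a union of $L$ with some of the \emph{other} complementary components of $L$), passing from $\hat{\bbC}\setminus L$ to $\hat{\bbC}\setminus K$ only deletes the $U_\alpha$, so $U$ survives as an open subset whose closure meets $\hat{\bbC}\setminus K$ in $U$ itself (because $\partial U=L\subset K$). Hence $U$ is both open and closed in $\hat{\bbC}\setminus K$ and, being connected, is a single component. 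In particular $L=\partial U$ is a compactum lying on the boundary of a complementary component of $K$, so Theorem \ref{compare_atoms} is applicable to the pair $L\subset K$.

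For the decomposition identity I would combine two one-sided containments. On one side, Proposition \ref{useful}, applied to this very family $\{U_\alpha\}$, says every atom of $K$ is either a singleton in $K\setminus L$ or a sub-continuum of an atom of $L$; in particular every atom of $K$ meeting $L$ lies inside a single atom of $L$. On the other side, Theorem \ref{compare_atoms} says every atom of $L$ lies inside a single atom of $K$. The squeezing step is then this: fix an atom $d$ of $L$; by Theorem \ref{compare_atoms} we have $d\subset d^{*}$ for some atom $d^{*}$ of $K$, and since $d^{*}$ meets $L$, Proposition \ref{useful} gives $d^{*}\subset d'$ for some atom $d'$ of $L$. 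As $\Dc_L$ is a partition and $d\subset d^{*}\subset d'$ with $d,d'\in\Dc_L$, the cells $d$ and $d'$ coincide, forcing $d=d^{*}=d'$. Thus every atom of $L$ is an atom of $K$, and conversely an atom of $K$ meeting $L$ is squeezed between an atom of $L$ and (again by the partition argument) equals it, while an atom of $K$ missing $L$ is a singleton in $K\setminus L$. Since the atoms of $L$ together with the singletons $\{x\}$, $x\in K\setminus L$, already partition $K=L\cup(K\setminus L)$, this yields the identity.

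To deduce $\tilde\lambda_K=\lambda_K$ I would argue pointwise. For $x\notin K$ both sides vanish; for $x\in K\setminus L=K^{o}$ the singleton $\{x\}$ is an order-one atom of $K$, so $\lambda_K(x)=0$, and since $\partial U'\subset\partial K$ for every complementary component $U'$ while $x\in K^{o}$ lies on no such boundary, also $\tilde\lambda_K(x)=0$. The substance is $x\in L$: here the order-one atom $d_1$ of $K$ containing $x$ is, by the identity just proved, exactly the order-one atom of $L$ containing $x$, i.e.\ the same continuum $d_1\in\Dc_L$. Because the core decomposition of a planar continuum is intrinsic to that continuum, the atoms of $d_1$ computed inside $K$ and inside $L$ coincide; iterating, the whole nested hierarchy of atoms-of-atoms issuing from $d_1$ is the same in $K$ and in $L$, so $\{x\}$ is an order-$m$ atom of $K$ iff it is an order-$m$ atom of $L$, whence $\lambda_K(x)=\lambda_L(x)$ for every $x\in L$. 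Finally, since $L=\partial U$ with $U$ a complementary component of $K$, the envelope function satisfies $\lambda_L(x)=\lambda_{\partial U}(x)\le\tilde\lambda_K(x)$ there, while $\tilde\lambda_K\le\lambda_K$ holds everywhere by the lambda inequality (Theorem \ref{lambda_inequality}); combining with $\lambda_K=\lambda_L$ on $L$ gives $\lambda_L(x)\le\tilde\lambda_K(x)\le\lambda_K(x)=\lambda_L(x)$, so all three agree.

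The step I expect to need the most care is the squeezing argument that upgrades the two containments into the exact identity—specifically, verifying that $U$ genuinely is a complementary component of $K$ (so that Theorem \ref{compare_atoms} applies in the right direction) and then invoking the intrinsic nature of the core decomposition to propagate the equality of atoms through every order. The remaining case analysis for the lambda equality is then routine.
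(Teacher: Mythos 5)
Your proof is correct and takes essentially the same route as the paper's: the paper likewise combines the containment of each atom of $L$ in an atom of $K$ (via Lemma \ref{key-lemma}/Theorem \ref{compare_atoms}) with Proposition \ref{useful}, squeezes the two partitions against each other to get $\Dc_K=\Dc_L\cup\{\{x\}:x\in K\setminus L\}$, and then reads off the lambda equality. You merely make explicit two steps the paper leaves implicit---that $U$ remains a complementary component of $K$ (so the comparison theorem applies), and the final sandwich $\lambda_L(x)\le\tilde{\lambda}_K(x)\le\lambda_K(x)=\lambda_L(x)$ on $L$ using the intrinsic nature of the atom hierarchy---and both fillings-in are correct.
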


\begin{proof}
As $L$ is unshielded, there is a component $U$ of $\hat{\mathbb{C}}\setminus L$ with $L=\partial U$. By Lemma \ref{key-lemma},  every atom of $L$ lies in a single atom of $K$. By Lemma \ref{useful}, every atom of $K$ intersecting $L$ is contained in a single atom of $L$. Thus every atom of $L$ is also an atom of $K$. As any singleton $\{x\}$ with $x\in K^o= K\setminus L$ is an atom of $K$, we have $\Dc_K=\Dc_L\cup\{\{x\}: x\in K\setminus L\}$. This indicates the Lambda Equality $\tilde{\lambda}_K=\lambda_K$.
\end{proof}

\begin{rem}\label{why_partially_unshielded}
Theorem \ref{part-2} gives a result that is slightly stronger than  Theorem \ref{equality-case}(iii). In particular, for any full compactum $K$ we have $\mathcal{D}_{\partial K}\subset\mathcal{D}_K$. Therefore, a full compactum $K$ is a Peano compactum if and only if the boundary $\partial K$ is. In particular, if  $G\subset\hat{\mathbb{C}}$ is  a simply connected bounded domain then $\partial G$ is locally connected if and only if $K=\hat{\mathbb{C}}\setminus G$ is locally connected, or equivalently when $K$ is a Peano continuum. This basic fact has been well known, see for instance the items (iii) and (iv) of \cite[p.20, Theorem 2.1]{Pom92}. Now, it is extended to a quantitative version in Theorem \ref{part-2}. This extension applies to an arbitrary full continuum, that may or may not be locally connected.
\end{rem}

\section{The Gluing Lemma for Lambda Functions}\label{glue}

We will follow the philosophy of the well known gluing lemma for continuous maps.
See for instance \cite[p.69, Theorem (4.6)]{Armstrong} for the simple case and \cite[p.70, Theorem (4.8)]{Armstrong} for the general setting. Our aim is to prove Theorem \ref{gluing_lemma}, which deals with the lambda functions $\lambda_K,\lambda_L$ for planar compacta  $K\supset L$  such that $A=\overline{K\setminus L}$ intersects $L$ at finitely many points $x_1,\ldots,x_n$. In Theorem \ref{baby_M}, we further extend Theorem \ref{gluing_lemma} to the case that $A\cap L$ is a countably infinite set, under additional assumptions. Notice that when $A\cap L$ is an infinite set Theorem \ref{gluing_lemma} may not hold. See Examples \ref{cantor_combs}, \ref{brooms} and \ref{cup-fs}.

\begin{proof}[{\bf Proof for Theorem \ref{gluing_lemma}}]
For $1\le i\le n$, denote by $d_i^1$ the order-$1$ atom of $A$ that contains $x_i$. Similarly, denote by $e_i^1$  the atom of $L$ that contains $x_i$.
Let $K_1=A_1\cup L_1$, where $\displaystyle A_1=\bigcup_id_i^1$ and $\displaystyle L_1=\bigcup_ie_i^1$. Then $K_1$ has finitely many components. Let $\Ec_1$ be the collection of these components.

By \cite[Theorem 1.3]{LYY-2020},  a point $y\ne x$ lies in $\overline{R_K}[x]$
if and only if $K\setminus(B_r(x)\cup B_r(y))$ has infinitely many components that intersect both $\partial B_r(x)$ and $\partial B_r(y)$ for small enough  $r>0$. Because of this, we can directly check that $\overline{R_K}=\overline{R_A}\cup\overline{R_L}$. Here $\overline{R_K},\overline{R_A},\overline{R_L}$ are respectively  the closed Sch\"onflies relations on $K,A$ and $L$.  Let \[
\Dc_1=\left(\Dc_L\setminus\left\{e_1^1,\ldots,e_n^1\right\}\right)\cup
\left(\Dc_A\setminus\left\{d_1^1,\ldots,d_n^1\right\}\right)\cup
\Ec_1.\]
Then $\Dc_1$ is an upper semi-continuous decomposition of $K$ into subcontinua. Since $\Dc_1$ does not split the fibers of $\overline{R_K}$, it is refined by $\Dc_K$, the core decomposition of $K$ with Peano quotient. On the other hand, the equality $\overline{R_K}=\overline{R_A}\cup\overline{R_L}$ indicates that $\mathcal{D}_K$ does not split the fibers of $\overline{R_A}$ and those of $\overline{R_L}$. Thus each atom of $A$ lies in an atom of $K$; similarly, every atom of $L$ lies in an atom of $K$. Consequently, we have.
\begin{lemma}\label{gluing_atoms_a}
$\Dc_K=\Dc_1$. Thus $d\cap A$ (or $d\cap L$) either is empty or consists of finitely many atoms of $A$ (resp. $L$) for any atom $d$ of $K$.
\end{lemma}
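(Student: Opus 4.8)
The plan is to prove the set equality $\Dc_K=\Dc_1$ by showing that $\Dc_K$ and $\Dc_1$ refine each other; since both are partitions of $K$, mutual refinement forces them to coincide. One direction is already in hand from the discussion preceding the lemma: $\Dc_1$ is an upper semi-continuous decomposition into subcontinua that does not split the fibers of $\overline{R_K}$, so the finest such decomposition $\Dc_K$ refines $\Dc_1$, meaning every atom of $K$ sits inside a single block of $\Dc_1$. Thus the only thing left to verify is the reverse refinement, namely that each block of $\Dc_1$ lies in a single atom of $K$.

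I would split this according to the three kinds of blocks making up $\Dc_1$. If $P$ is an atom of $L$ different from every $e_i^1$, or an atom of $A$ different from every $d_i^1$, then the already recorded identity $\overline{R_K}=\overline{R_A}\cup\overline{R_L}$ shows that $\Dc_K$ splits neither the fibers of $\overline{R_A}$ nor those of $\overline{R_L}$; hence each atom of $A$ and each atom of $L$, and in particular $P$, lies in one atom of $K$. The remaining case is that $P\in\Ec_1$ is a component of $K_1=A_1\cup L_1$. Here the argument I have in mind is: each $d_i^1$ and each $e_i^1$ lies in a single atom of $K$, and since $x_i\in d_i^1\cap e_i^1$ while distinct atoms of $K$ are disjoint, the union $d_i^1\cup e_i^1$ lies in one atom of $K$. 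Because $K_1$ is a finite union of the $d_i^1$ and $e_i^1$, only finitely many atoms of $K$ can meet the connected set $P$; these meet $P$ in nonempty, pairwise disjoint, relatively closed subsets that cover $P$, so the connectedness of $P$ leaves exactly one of them, and $P$ is contained in a single atom of $K$.

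Once both refinements are available, equality follows formally: any $P\in\Dc_1$ lies in some $d\in\Dc_K$, which in turn lies in some $P'\in\Dc_1$, whence $P\subset P'$ forces $P=P'=d$. For the concluding assertion I would simply read off the block structure. A block that is a non-junction atom of $A$ meets $L$ only inside $A\cap L=\{x_1,\dots,x_n\}$ and hence is disjoint from $L$, and symmetrically for non-junction atoms of $L$; while a component $P\in\Ec_1$ satisfies $P\cap A=\bigcup_{d_i^1\subset P}d_i^1$ and $P\cap L=\bigcup_{e_j^1\subset P}e_j^1$, the junction points $x_i\in d_i^1\cap e_i^1$ being absorbed into these unions. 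Hence for every atom $d$ of $K$ the set $d\cap A$ (resp. $d\cap L$) is either empty or a finite union of atoms of $A$ (resp. $L$).

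I expect the one genuinely delicate point to be the junction analysis at the finitely many points $x_i$: one must rule out both over-merging, which is exactly what the refinement of $\Dc_1$ by $\Dc_K$ forbids, and under-merging, which is handled by the connectedness argument for the blocks in $\Ec_1$. Everything else reduces to bookkeeping once the decomposition identity $\overline{R_K}=\overline{R_A}\cup\overline{R_L}$ is in place.
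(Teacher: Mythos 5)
Your proof is correct and takes essentially the same route as the paper: the paper likewise uses the identity $\overline{R_K}=\overline{R_A}\cup\overline{R_L}$ to conclude both that $\Dc_K$ refines $\Dc_1$ (since $\Dc_1$ is an upper semi-continuous decomposition into subcontinua splitting no fiber of $\overline{R_K}$) and that every atom of $A$ and of $L$ lies in a single atom of $K$, then asserts the lemma ``consequently.'' Your only addition is to spell out what that ``consequently'' hides --- the connectedness argument showing each component of $K_1$ lies in a single atom of $K$, and the formal mutual-refinement step forcing $\Dc_K=\Dc_1$ --- which is a faithful completion of the paper's argument rather than a different approach.
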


Lemma \ref{gluing_atoms_a} ensures that $\displaystyle \lambda_K(x)=\max\left\{\lambda_A(x),\lambda_L(x)\right\}$ for all $x\notin K_1$.  That is to say, the equation $\lambda_K(x)=\max\left\{\lambda_{\overline{K\setminus L}}(x),\lambda_L(x)\right\}$ in Theorem \ref{gluing_lemma} holds for all points $x\notin K_1$, so that we only need to consider the points $x\in K_1$.

Notice that we have set $A=\overline{K\setminus L}$, $\displaystyle A_1=\bigcup_id_i^1$, and $\displaystyle L_1=\bigcup_ie_i^1$. We will need to verify that $\displaystyle \lambda_{K_1}(x)=\max\left\{\lambda_{A_1}(x),\lambda_{L_1}(x)\right\}$ for all $x\in K_1$, since for $x\in A_1$ and $y\in L_1$ we have
\[
\begin{array}{ccc}
\lambda_A(x)=\left\{\begin{array}{ll} 0& \{x\}\in\Dc_A\\
1+\lambda_{A_1}(x)& otherwise\end{array}\right. &\text{and}&
\lambda_L(y)=\left\{\begin{array}{ll} 0& \{y\}\in\Dc_L\\
1+\lambda_{L_1}(y)& otherwise.\end{array}\right.
\end{array}\]
To do that, we recall that  $\mathcal{D}_{A_1}$ consists of all the order-$2$ atoms of $A$ lying in $A_1$. Similarly, $\mathcal{D}_{L_1}$ consists of all the order-$2$ atoms of $L$ lying in $L_1$. Thus we may repeat the above procedure again, replacing $A$ and $L$ by $A_1$ and $L_1$. This then gives rise to two compacta $A_2\subset A_1$ and $L_2\subset L_1$ such that
$\displaystyle \lambda_{K_1}(x)=\max\left\{\lambda_{A_1}(x),\lambda_{L_1}(x)\right\}$ for all $x\notin K_2=A_2\cup L_2$.

We may carry out the same procedure indefinitely and obtain two decreasing sequences of compacta: (1) $A_1\supset A_2\supset\cdots$ and (2) $L_1\supset L_2\supset\cdots$. Setting $K_p=A_p\cup L_p$ for $p\ge1$, we have the following equations:
\begin{equation}
\displaystyle \lambda_{K_{p}}(x)=\left\{\begin{array}{ll}0& \{x\}\in\Dc_{K_p}\\
1+\lambda_{K_{p+1}}(x)& otherwise\end{array}\right. \quad (x\in K_{p+1}).
\end{equation}
\begin{equation}
\displaystyle \lambda_{A_{p}}(x)=\left\{\begin{array}{ll}0& \{x\}\in\Dc_{A_p}\\
1+\lambda_{A_{p+1}}(x)& otherwsie\end{array}\right. \quad(x\in A_{p+1})
\end{equation}
\begin{equation}
\displaystyle \lambda_{L_{p}}(x)=\left\{\begin{array}{ll}0& \{x\}\in\Dc_{L_p}\\
1+\lambda_{L_{p+1}}(x)& otherwise\end{array}\right.\quad (x\in L_{p+1})
\end{equation}
\begin{equation}
\lambda_{K_p}(x)=\max\left\{\lambda_{A_p}(x),\lambda_{L_p}(x)\right\}\quad (x\notin K_{p+1})
\end{equation}
There are two possibilities. In the first, we have $K_p=K_{p+1}$  for some $p\ge1$, indicating that $K_m=K_p$ for all $m\ge p$.
In such a case, we have $\lambda_{K_p}(x)=\max\left\{\lambda_{A_p}(x),\lambda_{L_p}(x)\right\}$ and hence $\lambda_{K}(x)=\max\left\{\lambda_{A}(x),\lambda_{L}(x)\right\}$.
In the second, we have $K_p\ne K_{p+1}$ for all $p\ge1$. This implies that $\lambda_K(x)=\max\left\{\lambda_A(x),\lambda_L(x)\right\}=\infty$ holds for all $x\in K_\infty=\bigcap_pK_p$ and that $\lambda_{K}(x)=p+\lambda_{K_p}(x)=p+\max\left\{\lambda_{A_p}(x),\lambda_{L_p}(x)\right\}=
\max\left\{\lambda_{A}(x),\lambda_{L}(x)\right\}$ holds for $p\ge1$ and $x\notin K_p\setminus K_{p+1}$. Here $\displaystyle K_1\setminus K_{\infty}=\bigcup_{p=1}^\infty(K_p\setminus K_{p+1})$. This completes our proof.
\end{proof}

Lemma \ref{gluing_atoms_a} and Theorem \ref{gluing_lemma} are useful, when we study $\lambda_K$ for certain choices of planar compacta $K$. For instance, we may choose $K$ to be the Julia set of a renormalizable polynomial $f(z)=z^2+c$ and $L$ the small Julia set. For the sake of convenience, we further assume that the only critical point of $f$ is recurrent and that there is no irrationally neutral cycle. Then it is possible to choose a decreasing sequence of Jordan domains $\{U_n\}$, with $\overline{U_{n+1}}\subset U_n$ and $\displaystyle L=\bigcap_{n=1}^\infty U_n$, such that every $K\cap \partial U_n$ consists of finitely many points that are periodic or pre-periodic. See for instance \cite[section 2.2]{Jiang00}.
For any $n\ge1$ we can use \cite[Theorems 2 and 3]{Kiwi04}  to infer that every singleton $\{x\}$ with $x\in (K\cap\partial U_n)$ is an atom of $K$ hence is also an atom of $L_n=K\cap\overline{U_n}$. Combining these with Lemma \ref{gluing_atoms_a} and Theorem \ref{gluing_lemma},  we further see that $\mathcal{D}_{L_{n+1}}\subset\mathcal{D}_{L_n}\subset\mathcal{D}_K$ for all $n\ge1$. However, we are not sure whether $\mathcal{D}_L\subset\mathcal{D}_K$. Similarly, it is not clear whether $\lambda_K(x)=\lambda_L(x)$ holds for $x\in L$. Therefore, we propose the following.

\begin{que}\label{small_julia}
Let $K=L_0\supset L_1\supset L_2\supset\cdots$ a decreasing sequence of planar compacta such that  $L_n\cap\overline{K\setminus L_n}$ is a finite set for all $n\ge1$. 
Setting $L=\bigcap_{n\ge1}L_n$. Find conditions so that (1) $\mathcal{D}_L\subset\mathcal{D}_K$ or  (2) $\lambda_K(x)=\lambda_L(x)$ holds for all $x\in L$.
\end{que}

As a response to Question \ref{small_julia}, we turn to study the lambda functions of two planar compact $K\supset L$
such that $K\setminus L$ is contained in the union of at most countably many continua  $P_n\subset K$ that satisfy the following properties:
\begin{itemize}
\item[(P1)] every $P_n$ intersects $L$ at a single point $x_n$, and
\item[(P2)] for any constant $C>0$ at most finitely many $P_n$ are of diameter greater than $C$.
\item[(P3)] $P_n\cap P_m=\emptyset$ for $n\ne m$.
\end{itemize}
Here $P_n\setminus\{x_n\}$ might be disconnected for some of the integers $n\ge1$. Notice that there is a special situation, when $K$ is the Mandelbrot set $\M$. Then, in order that the above properties (P1)-(P3) be satisfied, we may choose $L$ to be  the closure of a hyperbolic component or   a {\bf Baby Mandelbrot set}.

As an extension of Theorem \ref{gluing_lemma}, we will obtain the following.

\begin{theo}\label{baby_M}
Given two planar compacta $K\supset L$ that satisfy (P1) to (P3),  we have \begin{equation}\label{baby}
\lambda_K(x)=\left\{\begin{array}{lll} \lambda_{P_n}(x)&x\in P_n\setminus\{x_n\}\ {for\ some}\ n&({case}\ 1)\\ \lambda_L(x)& x\in L\setminus\{x_n: n\in\mathbb{N}\}&({case}\ 2)\\ \max\left\{\lambda_L(x_n),\lambda_{P_n}(x_n)\right\}& x=x_n\ {for\ some}\ x_n&({case}\ 3)\\
0 &{otherwise}&(case\ 4)\end{array}\right. \end{equation}
\end{theo}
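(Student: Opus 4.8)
The plan is to reduce everything to an analysis of the closed Sch\"onflies relation $\overline{R_K}$, exactly in the spirit of the proof of Theorem \ref{gluing_lemma}, and then to exploit the self-similar character of the hypotheses (P1)--(P3). The central claim I would establish first is the decomposition
\[
\overline{R_K}=\overline{R_L}\cup\bigcup_{n}\overline{R_{P_n}},
\]
in which the relations on the right meet only along the diagonal, at the attaching points $x_n$. Granting this, the core decomposition $\Dc_K$ is obtained by taking the atoms of $L$ together with the atoms of every $P_n$ and merging two such pieces precisely when they share a point; since $P_n\cap L=\{x_n\}$ by (P1) and $P_n\cap P_m=\emptyset$ by (P3), the only merging occurs between an atom $d^L$ of $L$ and the atoms $d^{P_n}$ of those $P_n$ with $x_n\in d^L$. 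The resulting cell is a subcontinuum: the $P_n$-atoms are attached to $d^L$ at single points, and their diameters tend to $0$ by (P2), so the union is compact and connected. Verifying that this partition is upper semi-continuous and splits none of the fibers of $\overline{R_K}$, hence coincides with $\Dc_K$, proceeds along the lines of Proposition \ref{equality-case-2} and Theorem \ref{equal-cd}.

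For the decomposition itself, the inclusion $\overline{R_L}\cup\bigcup_n\overline{R_{P_n}}\subseteq\overline{R_K}$ is the easy half. Given a separating configuration in $L$ (or in some $P_n$), I would observe that because each $P_m$ meets the rest of $K$ only at the single point $x_m$, a $P_m$-appendage can never connect two distinct components of $L\setminus(B_r(x)\cup B_r(y))$; thus the infinitely many spanning components witnessing $(x,y)\in\overline{R_L}$ remain in distinct components of $K\setminus(B_r(x)\cup B_r(y))$, and likewise for each $\overline{R_{P_n}}$. The hard half, $\overline{R_K}\subseteq\overline{R_L}\cup\bigcup_n\overline{R_{P_n}}$, is where (P2) is indispensable, and is the step I expect to be the main obstacle. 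The key observation here is that for a fixed point $x$ and all sufficiently small $r$ the only $P_n$ that can meet $\partial B_r(x)$ is the one, if any, attached at $x$ itself: every other $P_n$ has $x\notin P_n$, so $\mathrm{dist}(x,P_n)>0$ and $P_n$ misses $B_r(x)$ once $r<\mathrm{dist}(x,P_n)$. Consequently, for small $r$, at most one $P_n$ reaches $\partial B_r(x)$ and at most one reaches $\partial B_r(y)$. If $(x,y)\in\overline{R_K}$ yields infinitely many spanning components of $K\setminus(B_r(x)\cup B_r(y))$ (as characterized in \cite[Theorem 1.3]{LYY-2020}), then all but finitely many of them must meet both circles inside $L$; since each such component $C$ has $C\cap L$ connected (again because $P_m$-appendages attach at single points), and distinct components $C$ force distinct components of $L\setminus(B_r(x)\cup B_r(y))$, because each component of the latter lies in a single component of $K\setminus(B_r(x)\cup B_r(y))$, one recovers infinitely many spanning components of $L$, i.e. $(x,y)\in\overline{R_L}$. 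The finitely many exceptional components, reaching a circle through the unique admissible $P_n$, are absorbed into the cases $(x,y)\in\overline{R_{P_n}}$; the subsidiary situations in which $x$ or $y$ lies off $L$ are simpler and handled by the same reach argument localized to the relevant $P_n$.

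With $\Dc_K$ in hand, I would finish by induction on the atom order, using that each merged order-one cell $D=d^L\cup\bigcup_{x_m\in d^L}d^{P_m}$ again satisfies (P1)--(P3), with $d^L$ playing the role of $L$ and the $d^{P_m}$ the role of the $P_m$. For $x\in P_n\setminus\{x_n\}$ the order-one atom of $K$ at $x$ is the $P_n$-atom, possibly merged, but the merge then happens away from $x$; peeling off one level together with the inductive form of the statement yields $\lambda_K(x)=\lambda_{P_n}(x)$. The case $x\in L\setminus\{x_n:n\in\mathbb{N}\}$ is symmetric and gives $\lambda_L(x)$, and points outside $K$ give $0$. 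The only genuinely combined value occurs at $x=x_n$: there the order-one atom is $D$, and applying the statement one level down to $D$ at its attaching point $x_n$, which is exactly the finite gluing situation of Theorem \ref{gluing_lemma} since within $D$ the relevant pieces $d^L$ and $d^{P_n}$ meet only at $x_n$, gives $\lambda_D(x_n)=\max\{\lambda_{d^L}(x_n),\lambda_{d^{P_n}}(x_n)\}$, whence $\lambda_K(x_n)=1+\lambda_D(x_n)=\max\{\lambda_L(x_n),\lambda_{P_n}(x_n)\}$. Running the induction to its end, and noting that both sides equal $\infty$ simultaneously when the hierarchy never terminates, as in the second alternative in the proof of Theorem \ref{gluing_lemma}, establishes the four-case formula (\ref{baby}).
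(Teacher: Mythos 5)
Your overall architecture coincides with the paper's: the identity $\overline{R_K}=\overline{R_L}\cup\bigl(\bigcup_n\overline{R_{P_n}}\bigr)$, the identification of $\Dc_K$ with the partition obtained by merging each atom $e_j$ of $L$ with the atoms $d_n$ of the $P_n$ attached inside it (this is exactly the paper's decomposition $\Dc_1$ in (\ref{baby_M_partition}) and Lemma \ref{gluing_atoms_b}), the observation that each merged cell again satisfies (P1)--(P3) so the argument recurses through the atom hierarchy, and the separate treatment of points lying in merged cells at every level, where all quantities are simultaneously $\infty$. Your handling of case 3 by one-level descent matches the paper's classification into atoms of ``pure type'' versus merged atoms. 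The one place you genuinely diverge is that the paper obtains the relation identity directly from the characterization in \cite[Theorem 1.3]{LYY-2020}, whereas you attempt a self-contained proof of the hard inclusion --- and that attempt contains a real error.

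The ``key observation'' on which your hard half rests is false: (P2) bounds the \emph{diameters} of the $P_n$, not their \emph{distances} to $x$, and $\inf_n \mathrm{dist}(x,P_n)$ can be $0$. Nothing prevents infinitely many small appendages, attached at points $x_m$ accumulating on the circle $\partial B_r(x)$, from meeting that circle: take $L=[0,1]$, $x=0$, attaching points $x_m$ dense in $L$, and each $P_m$ a short segment of length $2^{-m}$ placed so as to cross $|z|=r$; then for \emph{every} fixed $r$ infinitely many $P_m$ reach $\partial B_r(x)$. Consequently your count ``at most one $P_n$ reaches each circle, so all but finitely many spanning components meet both circles inside $L$'' fails, and the ``finitely many exceptional components'' cannot simply be absorbed into some $\overline{R_{P_n}}$: there may be infinitely many spanning components of $K\setminus(B_r(x)\cup B_r(y))$ touching the circles only through pairwise distinct tiny appendages, which as they stand witness neither $(x,y)\in\overline{R_L}$ at radius $r$ nor $(x,y)\in\overline{R_{P_n}}$ for any single $n$. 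The step can be repaired, but it needs an idea absent from your text: if a spanning component touches $\partial B_r(x)$ only through $P_m$, then $x_m$ lies within $\diam P_m$ of that circle; by (P3) the appendages occurring in distinct components are distinct, so by (P2) these diameters tend to $0$, hence the associated components of $L$ meet both open balls of any slightly larger radius $r+\varepsilon$, and an irreducible-subcontinuum (boundary-bumping) argument yields infinitely many spanning components of $L$ at radius $r+\varepsilon$; combined with the monotonicity in $r$ of the spanning condition, this gives $(x,y)\in\overline{R_L}$, while pure-$P_n$ spanning pieces need diameter at least $\rho(x,y)-2r$, so a pigeonhole over the finitely many large $P_n$ handles the remaining case. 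Alternatively, do as the paper does and extract the identity from \cite[Theorem 1.3]{LYY-2020}. With that step mended, the remainder of your proposal (easy inclusion, $\Dc_K=\Dc_1$, the level recursion) is sound.
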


We just need to consider the above equation for points $x\in K$.
To do that, we may
define an equivalence $\sim$ on $\mathbb{N}$ so that $m\sim n$ if and only if $x_m,x_n$ are  contained in the same atom of $L$. Let $\{I_j:j\}$ be the equivalence classes of $\sim$.

Denote by $d_n$ the atom of $P_n$ that contains $x_n$, and by $e_j$ the atom of $L$ that contains all $x_n$ with $n\in I_j$. Moreover, set $e'_j=e_j\cup\left(\bigcup_{n\in I_j}d_n\right)$ for every $j$.

Then $\{e'_j: j\}$ is a collection of at most countably many continua that are pairwise disjoint.
Now we consider the following upper semi-continuous decomposition of $K$:
\begin{equation}\label{baby_M_partition}
\Dc_1=\left(\Dc_L\setminus\left\{e_j: j\right\}\right)\cup
\left(\bigcup_n\Dc_{P_n}\setminus\{d_n\}\right)\cup
\{e'_j: j\}.
\end{equation}
All its elements are sub-continua of $K$ that do not split the fibers of $\overline{R_K}$. So it is refined by $\Dc_K$. On the other hand, by \cite[Theorem 1.3]{LYY-2020}, we also have $\overline{R_K}=\overline{R_L}\cup\left(\bigcup_{n\ge1}\overline{R_{P_n}}\right)$. Thus $\mathcal{D}_K$ does not split the fibers of $\overline{R_L}$ and those of $\overline{R_{P_n}}$ for all $n\ge1$. Therefore, every atom of $L$ lies in an atom of $K$, so does  every atom of $P_n$. Consequently, we have.
\begin{lemma}\label{gluing_atoms_b}
$\Dc_K=\Dc_1$. Therefore, for any atom $d$ of $K$  the intersection $d\cap L$ (or $d\cap P_n$ for any $n\ge1$) is either empty or a single atom of $L$ (respectively, $P_n$).
\end{lemma}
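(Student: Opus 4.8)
The plan is to mimic the structure of the proof of Lemma \ref{gluing_atoms_a}, but I must be more careful because $A\cap L = \{x_n : n\in\mathbb{N}\}$ is now allowed to be countably infinite, so the finiteness arguments that justified ``$\Dc_1$ is upper semi-continuous'' in the finite case need to be re-examined. The overall strategy has two halves: first show $\Dc_K$ refines $\Dc_1$ (equivalently, $\Dc_1$ does not split fibers of $\overline{R_K}$, so it is coarser than or equal to the core decomposition), and second show $\Dc_1$ refines $\Dc_K$ (equivalently, $\Dc_1$ is itself a monotone upper semi-continuous decomposition of $K$ into subcontinua that does not split the fibers of $\overline{R_K}$, hence is refined by the \emph{finest} such decomposition $\Dc_K$). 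Combining the two refinements gives $\Dc_K=\Dc_1$.

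For the first half I would rely on the relation decomposition $\overline{R_K}=\overline{R_L}\cup\left(\bigcup_{n\ge1}\overline{R_{P_n}}\right)$, which is already asserted just before the lemma via \cite[Theorem 1.3]{LYY-2020}. Because $\Dc_K$ does not split any fiber of $\overline{R_K}$, it does not split any fiber of $\overline{R_L}$ nor any fiber of $\overline{R_{P_n}}$; hence every atom of $L$ lies inside a single atom of $K$ and every atom of $P_n$ lies inside a single atom of $K$. In particular each $d_n$ lies in an atom of $K$ and each $e_j$ lies in an atom of $K$, so each glued piece $e'_j=e_j\cup\bigl(\bigcup_{n\in I_j}d_n\bigr)$ — being a connected union of sets each contained in the atom of $K$ through $x_n$, all of which share the point $x_n\in e_j$ — also lies in a single atom of $K$. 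The remaining blocks of $\Dc_1$ are ordinary atoms of $L$ or of some $P_n$ and are likewise swallowed by atoms of $K$. Thus $\Dc_K$ refines $\Dc_1$.

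For the second half I must verify that $\Dc_1$ as defined in \eqref{baby_M_partition} is a genuine upper semi-continuous decomposition of $K$ into subcontinua that does not split the fibers of $\overline{R_K}$; the definition of $\Dc_K$ as the finest such decomposition then forces $\Dc_1$ to refine $\Dc_K$. That $\Dc_1$ does not split fibers of $\overline{R_K}$ again follows from $\overline{R_K}=\overline{R_L}\cup\bigcup_n\overline{R_{P_n}}$, since each fiber lies entirely in $L$ or entirely in some $P_n$ and the blocks of $\Dc_1$ respect the atoms of $L$ and of each $P_n$. The genuinely delicate point — and the step I expect to be the main obstacle — is upper semi-continuity of $\Dc_1$, i.e.\ that the induced equivalence is closed in $K\times K$. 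This is exactly where (P2) and (P3) must be used: I would take a sequence of equivalent pairs $(u_k,v_k)\to(u,v)$ and argue that the relevant block containing $u_k,v_k$ cannot ``escape to infinitely many distinct pieces'' in the limit. If infinitely many of the pairs sit in distinct $e'_j$, then property (P2) (diameters of the $P_n$ shrink to zero) forces those $e'_j$ to degenerate toward points of $L$ as $j$ varies, so in the limit $u,v$ must collapse into a single block of $\Dc_L$; property (P3) (disjointness of the $P_n$) guarantees the glued blocks $e'_j$ are pairwise disjoint so no unintended merging occurs. The careful bookkeeping here — distinguishing whether the limit pair lands inside one fixed $e'_j$, inside a leftover atom of $L$ or $P_n$, or forces a collapse governed by the convergence $\diam(P_n)\to0$ — is the heart of the argument; once closedness is established, the final sentence of the lemma (that $d\cap L$ and $d\cap P_n$ are empty or a single atom) is an immediate read-off from the explicit description of the blocks of $\Dc_1=\Dc_K$.
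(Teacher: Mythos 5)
Your proposal is correct and follows essentially the same route as the paper: the paper's (inline) argument consists exactly of your two containments, namely that $\Dc_1$ as defined in Equation (\ref{baby_M_partition}) is an upper semi-continuous decomposition of $K$ into subcontinua that does not split the fibers of $\overline{R_K}$ (so the finest such decomposition $\Dc_K$ sits inside its blocks), and that the identity $\overline{R_K}=\overline{R_L}\cup\left(\bigcup_{n\ge1}\overline{R_{P_n}}\right)$ places every atom of $L$ and of each $P_n$ --- and hence every glued block $e_j'$, since the pieces $d_n$ meet $e_j$ at the points $x_n$ --- inside a single atom of $K$. The only differences are cosmetic or additive: you use ``refines'' with the opposite orientation in a couple of places (though the containments you actually derive are the correct ones), and you explicitly verify the upper semi-continuity of $\Dc_1$ via closedness of the induced relation in $K\times K$ using (P2) and (P3), a point the paper merely asserts, so your extra care there strengthens rather than departs from the paper's proof.
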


\begin{proof}[{\bf Proof for Theorem \ref{baby_M}}]
Clearly,  $\lambda_K(x)=\lambda_L(x)$ for all $x$ in $L\setminus\left(\bigcup_je'_j\right)$. Similarly,  $\lambda_K(x)=\lambda_{P_n}(x)$ for all $x$ in $P_n\setminus d_n$. Moreover, $\lambda_K(x_n)=\lambda_L(x_n)=\lambda_{P_n}(x_n)=0$ for all $x_n$ such that $\{x_n\}$ is an atom of $L$ and also an atom of $P_n$. Therefore, we just need to consider those $e_j'$  that are non-degenerate.

Let $\mathcal{N}_1$ be the collection of all the integers $j$ such that $e_j'$ is not a singleton.
Then $e_{n_1}$ is a subcontinuum of $e_{n_1}'$ for any $n_1\in\mathcal{N}_1$, such that $e_{n_1}'\setminus e_{n_1}$ is covered by all those $d_n$ with $n\in I_{n_1}$. Thus the properties (P1) - (P3) are satisfied, if $K$ and $L$ are respectively replaced by $e_{n_1}'$ and $e_{n_1}$. It is then routine to check the following:
\begin{equation}\label{inductive_1}
\lambda_K(x)=\left\{\begin{array}{ll}
\lambda_{P_n}(x)&x\in P_n\setminus\left(\bigcup_{n_1}e_{n_1}'\right)\\
\lambda_{L}(x)& x\in L\setminus\left(\bigcup_{n_1}e_{n_1}'\right)\\
1+\lambda_{e_{n_1}'}(x)& x\in e_{n_1}'\ \text{for\ some}\ n_1\in\mathcal{N}_1
\end{array}\right.
\end{equation}

Every atom of $e_{n_1}'$ falls into exactly one of the following possibilities: (1) an order-two atom of $P_n$ for some $n\in I_{n_1}$ that is disjoint from $\{x_n\}$, (2) an order-two atom of  $L$ that is disjoint from  $\{x_n: n\in I_{n_1}\}$,  (3) a singleton $\{x_n\}$ for some $n\in I_{n_1}$, which is an order-two atom of $L$ and is also an order-two atom of  $P_n$, (4) a non-singleton continuum that consists of the order-two atom of $L$ containing some $x_n$, with $n\in I_{n_1}$, and the order-two atom of $P_n$ containing $x_n$.

An atom falling in the first three possibilities  is called an atom of {\bf pure type}.
We can check that $e_{n_1}'$ has at most countably many atoms that is not of pure type. Such an atom is generally denoted as $e'_{n_1n_2}$. Similarly we can define continua $e'_{n_1n_2\ldots n_p}$ for $p>2$. On the one hand, such a continuum  is an order-$p$ atom of $K$; on the other, it is also an atom of $e'_{n_1n_2,\ldots n_{p-1}}$ that is not of pure type.
By the same arguments, that have been used in obtaining Equation (\ref{inductive_1}), we can infer the following equation:
\begin{equation}\label{inductive_2}
\lambda_K(x)=\left\{\begin{array}{ll}
\lambda_{P_n}(x)&x\in P_n\setminus\left(\bigcup_{n_1,n_2}e_{n_1n_2}'\right)\\
\lambda_{L}(x)& x\in L\setminus\left(\bigcup_{n_1,n_2}e_{n_1n_2}'\right) \\
2+\lambda_{e_{n_1n_2}'}(x)& x\in  e_{n_1n_2}'\ \text{for\ some}\ n_1,n_2
\end{array}\right.
\end{equation}
This equation may be extended to order-$p$ atoms $e'_{n_1n_2\ldots n_p}$ with $p\ge2$ in the following way.
\begin{equation}\label{inductive_p}
\lambda_K(x)=\left\{\begin{array}{ll}
\lambda_{P_n}(x)&x\in P_n\setminus\left(\bigcup_{n_1,\ldots,n_p}e_{n_1\cdots n_p}'\right)\\
\lambda_{L}(x)& x\in L\setminus\left(\bigcup_{n_1,\ldots,n_p}e_{n_1\cdots n_p}'\right) \\
p+\lambda_{e_{n_1\cdots n_p}'}(x)& x\in  e_{n_1\cdots n_p}'\ \text{for\ some}\ n_1,\ldots,n_p
\end{array}\right.
\end{equation}
Notice that Theorem \ref{baby_M} holds for every $x\in K$ lying in an atom  of  $e'_{n_1n_2\ldots n_p}$ that is of pure type. Such a point $x$ does not lie in $e'_{n_1n_2\ldots n_pn_{p+1}}$ for any choice of $n_{p+1}$ and hence falls into exactly one of the following possibilities:
\begin{itemize}
\item that $x\in P_n\setminus\{x_n\}$ for some $n\ge1$ and $\lambda_K(x)=\lambda_{P_n}(x)\ge p$.
\item that $x\in L\setminus\{x_n: n\ge1\}$ and $\lambda_K(x)=\lambda_L(x)\ge p$.
\item that $x=x_n$ for some $n\ge1$ and $\lambda_K(x)=\max\left\{\lambda_L(x),\lambda_{P_n}(x)\right\}=p$.
\end{itemize}
Every other point $x\in K$ necessarily lies in $e'_{n_1n_2\ldots n_p}$ for infinitely many $p$. The continua $e'_{n_1n_2\ldots n_p}$ decrease to a continuum $M_x$. There are three possibilities, either $x\in L\setminus\{x_n\}$, or  $x\in P_n\setminus\{x_n\}$ for some $n\ge1$, or $x=x_n$ for some $n\ge1$.  In the first case, we have $\lambda_K(x)=\lambda_L(x)=\infty$; in the second, we have  $\lambda_K(x)=\lambda_{P_n}(x)=\infty$; in the third, we have $\lambda_K(x)=\max\left\{\lambda_L(x),\lambda_{P_n}(x)\right\}=\infty$. This completes our proof.
\end{proof}

\begin{rem}
Let $K=L_0\supset L_1\supset L_2\supset\cdots$ be given as in Question \ref{small_julia}. Also let  $L=\bigcap_{n\ge1}L_n$.  Then $L_n\cap\overline{K\setminus L_n}$ is a finite set for all $n\ge1$.
Assume in addition that (1) every singleton $\{x_n\}$ is an atom of $K$ and (2) $K$ and $L$  satisfy the requirements in Theorem \ref{baby_M}. By Lemma \ref{gluing_atoms_a}, we see that $\mathcal{D}_{L_{n+1}}\subset\mathcal{D}_{L_n}\subset\mathcal{D}_K$ for all $n\ge1$; thus from Theorem \ref{gluing_lemma} we can infer that $\lambda_K(x)=\lambda_{L_n}(x)$  for all $x\in L_n$. Moreover, by Lemma \ref{gluing_atoms_b} we have $\mathcal{D}_L\subset\mathcal{D}_K$. Therefore, by Theorem \ref{baby_M} we further infer that $\lambda_K(x)=\lambda_L(x)$ holds for all $x\in L$.
\end{rem}

\section{Examples}\label{examples}

We shall construct examples.
The beginning two  provide choices of compacta $A,B\subset\hat{\bbC}$ such that $\lambda_{A\cup B}(x)\ne\max\left\{\lambda_A(x),\lambda_B(x)\right\}$ for some $x$, although $\lambda_A(x)=\lambda_B(x)$ for all $x\in A\cap B$. In the first,  $A\cap B$ is an uncountable set; in the second,  $A\cap B$ is a countably infinite set. Therefore, the conditions of Theorem \ref{gluing_lemma} are not satisfied.

\begin{exam}\label{cantor_combs}
Let $A=
\{t+s{\bf i}: t\in\Kc, 0\le s\le1\}$, where $\Kc$ is the Cantor ternary set. Let
$B=
\{t+(1+s){\bf i}: 0\le t\le 1, s\in\Kc\}$. Let $A_1=A\cup B$ and $B_1=(A+1+{\bf i})\cup(B+1-{\bf i})$.
See Figure \ref{not_glued} for a simplified depiction of $A, B, A_1, B_1$.
\begin{figure}[ht]
\vspace{-0.05cm}
\begin{tabular}{ccccc}
\begin{tikzpicture}[x=1cm,y=1cm,scale=0.5]
\foreach \i in {0,...,3}
{
    \draw[gray,very thick] (3*\i/27,0) -- (3*\i/27,3);
    \draw[gray,very thick] (6/9+3*\i/27,0) -- (6/9+3*\i/27,3);
     \draw[gray,very thick] (2+3*\i/27,0) -- (2+3*\i/27,3);
    \draw[gray,very thick] (2+6/9+3*\i/27,0) -- (2+6/9+3*\i/27,3);
}
\end{tikzpicture} \hspace{0.25cm}
&
\hspace{0.25cm}
\begin{tikzpicture}[x=1cm,y=1cm,scale=0.5]
\foreach \i in {0,...,3}
{
    \draw[gray,very thick] (0,3*\i/27+3) -- (3,3*\i/27+3);
    \draw[gray,very thick] (0,6/9+3*\i/27+3) -- (3,6/9+3*\i/27+3);
     \draw[gray,very thick] (0,2+3*\i/27+3) -- (3,2+3*\i/27+3);
    \draw[gray,very thick] (0,2+6/9+3*\i/27+3) -- (3,2+6/9+3*\i/27+3);
}
\draw[gray,dashed] (0,0) -- (3,0)-- (3,3)-- (0,3)-- (0,0);
\end{tikzpicture} \hspace{0.25cm}

&
\hspace{0.25cm}
\begin{tikzpicture}[x=1cm,y=1cm,scale=0.5]

\foreach \i in {0,...,3}
{
    \draw[gray,very thick] (3*\i/27,0) -- (3*\i/27,3);
    \draw[gray,very thick] (6/9+3*\i/27,0) -- (6/9+3*\i/27,3);
     \draw[gray,very thick] (2+3*\i/27,0) -- (2+3*\i/27,3);
    \draw[gray,very thick] (2+6/9+3*\i/27,0) -- (2+6/9+3*\i/27,3);
}
\foreach \i in {0,...,3}
{
    \draw[gray,very thick] (0,3*\i/27+3) -- (3,3*\i/27+3);
    \draw[gray,very thick] (0,6/9+3*\i/27+3) -- (3,6/9+3*\i/27+3);
     \draw[gray,very thick] (0,2+3*\i/27+3) -- (3,2+3*\i/27+3);
    \draw[gray,very thick] (0,2+6/9+3*\i/27+3) -- (3,2+6/9+3*\i/27+3);
}
\end{tikzpicture} \hspace{0.25cm}

&
\hspace{0.25cm}
\begin{tikzpicture}[x=1cm,y=1cm,scale=0.5]
\foreach \i in {0,...,3}
{
    \draw[gray,very thick] (3+3*\i/27,3) -- (3+3*\i/27,6);
    \draw[gray,very thick] (3+6/9+3*\i/27,3) -- (3+6/9+3*\i/27,6);
     \draw[gray,very thick] (3+2+3*\i/27,3) -- (3+2+3*\i/27,6);
    \draw[gray,very thick] (3+2+6/9+3*\i/27,3) -- (3+2+6/9+3*\i/27,6);
}
\foreach \i in {0,...,3}
{
    \draw[gray,very thick] (3,3*\i/27) -- (3+3,3*\i/27);
    \draw[gray,very thick] (3,6/9+3*\i/27) -- (3+3,6/9+3*\i/27);
     \draw[gray,very thick] (3,2+3*\i/27) -- (3+3,2+3*\i/27);
    \draw[gray,very thick] (3,2+6/9+3*\i/27) -- (3+3,2+6/9+3*\i/27);
}
\end{tikzpicture} \hspace{0.25cm}

&
\hspace{0.25cm}
\begin{tikzpicture}[x=1cm,y=1cm,scale=0.5]

\foreach \i in {0,...,3}
{
    \draw[gray,very thick] (3*\i/27,0) -- (3*\i/27,3);
    \draw[gray,very thick] (6/9+3*\i/27,0) -- (6/9+3*\i/27,3);
     \draw[gray,very thick] (2+3*\i/27,0) -- (2+3*\i/27,3);
    \draw[gray,very thick] (2+6/9+3*\i/27,0) -- (2+6/9+3*\i/27,3);
}
\foreach \i in {0,...,3}
{
    \draw[gray,very thick] (0,3*\i/27+3) -- (3,3*\i/27+3);
    \draw[gray,very thick] (0,6/9+3*\i/27+3) -- (3,6/9+3*\i/27+3);
     \draw[gray,very thick] (0,2+3*\i/27+3) -- (3,2+3*\i/27+3);
    \draw[gray,very thick] (0,2+6/9+3*\i/27+3) -- (3,2+6/9+3*\i/27+3);
}
\foreach \i in {0,...,3}
{
    \draw[gray,very thick] (3+3*\i/27,3) -- (3+3*\i/27,6);
    \draw[gray,very thick] (3+6/9+3*\i/27,3) -- (3+6/9+3*\i/27,6);
     \draw[gray,very thick] (3+2+3*\i/27,3) -- (3+2+3*\i/27,6);
    \draw[gray,very thick] (3+2+6/9+3*\i/27,3) -- (3+2+6/9+3*\i/27,6);
}
\foreach \i in {0,...,3}
{
    \draw[gray,very thick] (3,3*\i/27) -- (3+3,3*\i/27);
    \draw[gray,very thick] (3,6/9+3*\i/27) -- (3+3,6/9+3*\i/27);
     \draw[gray,very thick] (3,2+3*\i/27) -- (3+3,2+3*\i/27);
    \draw[gray,very thick] (3,2+6/9+3*\i/27) -- (3+3,2+6/9+3*\i/27);
}
\end{tikzpicture}
\\ $A$& $B$& $A_1=A\cup B$& $B_1$& $A_1\cup B_1$\end{tabular}
\caption{The two compacta $A, B$ and their union.}\label{not_glued}
\end{figure}
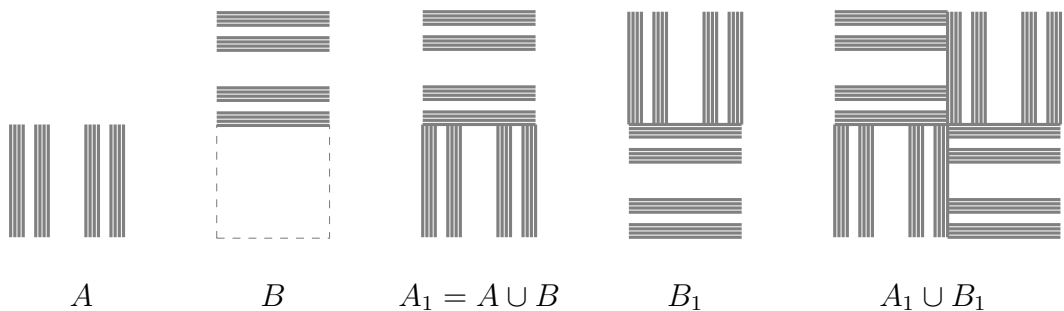
Then
$\lambda_A(x)=1$ for all $x\in A$ and vanishes otherwise; similarly,  $\lambda_B(x)=1$ for all $x\in B$ and vanishes otherwise.
However, both $A\cap B$ and $A_1\cap B_1$ are uncountable, thus the conditions in Theorem \ref{gluing_lemma} are not satisfied. Moreover, we have
\[\lambda_{A_1}(x)=\lambda_{A\cup B}(x)=\left\{\begin{array}{ll}2&x\in A\\ 1& B\setminus A\\ 0& {otherwise}\end{array}\right.\quad {and}\quad
\lambda_{A_1\cup B_1}(x)=\left\{\begin{array}{ll}\infty & x\in (A_1\cup B_1)\\  0& {otherwise}.\end{array}\right.\]

\end{exam}

\begin{exam}\label{brooms}
Set $A=\bigcup\limits_{n\ge0}A_n$. Here $A_0=\{s{\bf i}: 0\le s\le1\}$ and $A_1$ is the continuum that consists of the line $\displaystyle\left\{1+t{\bf i}: 0\le t\le1\right\}$ and all those lines connecting $1+{\bf i}$ to $\displaystyle\frac{k}{k+1}$ for $k\ge1$; moreover, for $n\ge2$, $A_n=\displaystyle \left\{2^{-n+1}t+s{\bf i}: t+s{\bf i}\in A_1\right\}$.  See  Figure \ref{broom_comb}.
\begin{figure}[ht]
\begin{center}
\begin{tabular}{cc}
\begin{tikzpicture}[x=1.618cm,y=1cm,scale=1]
\foreach \i in {0,...,3}
{
    \draw[gray,very thick] (0,3+3*\i/27) -- (3,3+3*\i/27);
    \draw[gray,very thick] (0,3+6/9+3*\i/27) -- (3,3+6/9+3*\i/27);
     \draw[gray,very thick] (0,3+2+3*\i/27) -- (3,3+2+3*\i/27);
    \draw[gray,very thick] (0,3+2+6/9+3*\i/27) -- (3,3+2+6/9+3*\i/27);
}

\draw[gray,very thick] (0,0) --(0,3);
\draw[gray,very thick] (3,0) --(3,3);
\draw[gray,very thick] (3/2,0) --(3/2,3);
\draw[gray,very thick] (3/4,0) --(3/4,3);
\draw[gray,very thick] (3/8,0) --(3/8,3);
\draw[gray,very thick] (3/16,0) --(3/16,3);
\draw[gray,very thick] (3/32,0) --(3/32,3);
\draw[gray,very thick] (3/64,0) --(3/64,3);
\draw[gray,very thick] (3/128,0) --(3/128,3);
\draw[gray,very thick] (3/256,0) --(3/256,3);
\draw[gray,very thick] (3/512,0) --(3/512,3);

\foreach \i in {2,...,7}
{
    \draw[gray,very thick] (3,3) -- (3-3/\i,0);
}
\node at (2.8,0.15) {$\ldots$};

\foreach \i in {2,...,7}
{
    \draw[gray,very thick] (3/2,3) -- (3/2-1.5/\i,0);
}

 \node at (9/16,1.75) {$\vdots$};
  \node at (9/16,1.25) {$\vdots$};
\node at (-0.1,0.2){$0$}; \node at (3.1,0.2){$1$};
\node at (0,0){$\cdot$}; \node at (3,0){$\cdot$};
\node at (3,3){$\cdot$}; \node at (3,6){$\cdot$};
\node at (3.35,3.0){$1\!+\!{\bf i}$}; \node at (3.35,6.0){$1\!+\!2{\bf i}$};
\end{tikzpicture}
&
\begin{tikzpicture}[x=1.618cm,y=1cm,scale=1]
\foreach \i in {0,...,3}
{
    \draw[gray,dashed] (0,3+3*\i/27) -- (3,3+3*\i/27);
    \draw[gray,dashed] (0,3+6/9+3*\i/27) -- (3,3+6/9+3*\i/27);
     \draw[gray,dashed] (0,3+2+3*\i/27) -- (3,3+2+3*\i/27);
    \draw[gray,dashed] (0,3+2+6/9+3*\i/27) -- (3,3+2+6/9+3*\i/27);
}

\draw[gray,very thick] (0,3) --(3,3);

\draw[gray,very thick] (0,0) --(0,3);
\draw[gray,very thick] (3,0) --(3,3);
\draw[gray,very thick] (3/2,0) --(3/2,3);
\draw[gray,very thick] (3/4,0) --(3/4,3);
\draw[gray,very thick] (3/8,0) --(3/8,3);
\draw[gray,very thick] (3/16,0) --(3/16,3);
\draw[gray,very thick] (3/32,0) --(3/32,3);
\draw[gray,very thick] (3/64,0) --(3/64,3);
\draw[gray,very thick] (3/128,0) --(3/128,3);
\draw[gray,very thick] (3/256,0) --(3/256,3);
\draw[gray,very thick] (3/512,0) --(3/512,3);

\node at (-0.1,0.2){$0$}; \node at (3.1,0.2){$1$}; \node at (3.35,3.0){$1\!+\!{\bf i}$};
\end{tikzpicture}\\ $A\cup B$ & $d$
\end{tabular}
\end{center}
\vskip -0.75cm
\caption{The compactum $A\cup B$ and the  atom $d$ of $A\cup B$.}\label{broom_comb}
\end{figure}
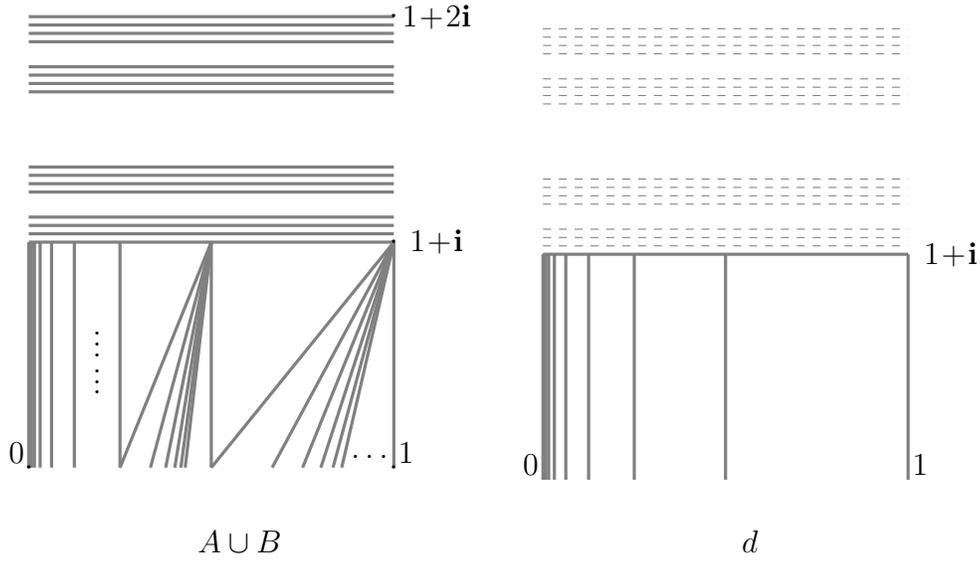
Further setting $B$ as in Example \ref{cantor_combs}, we have $A\cap B=\{ {\bf i}\}\cup\left\{2^{-n}+{\bf i}: n\ge0\right\}$.
If $x\in A\cap B$ then $\lambda_A(x)=\lambda_B(x)=1$. Let
$\displaystyle L_1=\left\{t+s{\bf i}:  0\le s\le 1, t=0\ {or}\ 2^{-n}\ {for\ some}\ n\ge0\right\}.$ Then $d=L_1 \cup\ \{t+{\bf i}: 0\le t\le1\}$ is an atom of $A\cup B$ and is not locally connected at any $x\in A_0$. Moreover, we have
\[\lambda_{A}(x)=\left\{\begin{array}{ll}1&x\in L_1 \\  0& {otherwise}\end{array}\right.\quad {and}\quad
\lambda_{A\cup B}(x)=\left\{\begin{array}{ll}2& x\in A_0\\ 1&x\in (B\cup d)\setminus A_0\\  0& {otherwise}.\end{array}\right.\]
\end{exam}


The next two examples are about $E$-continua $K\subset\hat{\mathbb{C}}$ such that the lambda equality given in (i) or (ii) of Theorem \ref{equality-case} does not hold.

\begin{exam}\label{E-compactum}
Let $X$ denote the square $[1,2]\times[0,{\mathbf i}]\subset\hat{\mathbb{C}}$. Let $Y$ be an embedding of $[0,\infty)$ whose closure $\overline{Y}$ equals the union of $Y$ with $\partial X$. See the left part of Figure \ref{negative} for a simplified representation of \ $\overline{Y}$, which is depicted as \tb{blue}.
\begin{figure}[ht]
\vskip -0.25cm
\begin{tabular}{ll}
\begin{tikzpicture}[x=0.2cm,y=0.2cm,scale=0.55]
\draw[gray,thick] (64,0) -- (64,32) -- (0,32) -- (0,0) -- (64,0);

\foreach \j in {0,1}
{ \draw[gray, thick] (32,\j*16) -- (32,16+\j*16) -- (16,16+\j*16) -- (16,\j*16) --(32,\j*16);
}
\foreach \j in {0,...,3}
{
    \draw[gray, thick] (16,\j*8) -- (16,8+\j*8) -- (8,8+\j*8) -- (8,\j*8) --(16,\j*8);
}
\foreach \j in {0,...,7}
{
    \draw[gray, thick] (8,\j*4) -- (8,4+\j*4) -- (4,4+\j*4) -- (4,\j*4) --(8,\j*4);
}
\foreach \j in {0,...,15}
{
    \draw[gray, thick] (4,\j*2) -- (4,2+\j*2) -- (2,2+\j*2) -- (2,\j*2) --(4,\j*2);
}
\foreach \j in {0,...,31}
{
    \draw[gray, thick] (2,\j*1) -- (2,1+\j*1) -- (1,1+\j*1) -- (1,\j*1) --(2,\j*1);
}
\foreach \j in {0,...,63}
{
    \draw[gray, thick] (1,\j*1/2) -- (1,1/2+\j*1/2) -- (1/2,1/2+\j*1/2) -- (1/2,\j*1/2) --(1,\j*1/2);
}

\foreach \j in {0,...,127}
{
    \draw[gray, thick] (1/2,\j*1/4) -- (1/2,1/4+\j*1/4) -- (1/4,1/4+\j*1/4) -- (1/4,\j*1/4) --(1/2,\j*1/4);
}
\foreach \j in {0,...,255}
{
    \draw[gray, thick] (1/4,\j*1/8) -- (1/4,1/8+\j*1/8) -- (1/8,1/8+\j*1/8) -- (1/8,\j*1/8) --(1/4,\j*1/8);
}

\foreach \k in {0,1}
{
\draw[gray, dashed, thick] (16+1,16*\k+1) -- (32-1,16*\k+1) -- (32-1, 16*\k+16-1) -- (16+1/2,16*\k+16-1);
}

\foreach \k in {0,1,2,3}
{
\draw[gray, dashed, thick] (8+1/2,8*\k+1/2) -- (16-1/2,8*\k+1/2) -- (16-1/2, 8*\k+8-1/2) -- (8+1/2,8*\k+8-1/2);
}

\foreach \i in {0,1}
{\foreach \j in {2,...,6}
{
    \draw[gray, thick] (16+\j,\j+16*\i) -- (32-\j,\j+16*\i) -- (32-\j,16-\j+16*\i) -- (16+\j-1,16-\j+16*\i)--(16+\j-1,\j-1+16*\i);
}
}

\foreach \i in {0,...,3}
{\foreach \j in {2,...,6}
{
    \draw[gray] (8+\j/2,\j/2+8*\i) -- (16-\j/2,\j/2+8*\i) -- (16-\j/2,8-\j/2+8*\i) --
    (8+\j/2-1/2,8-\j/2+8*\i)--(8+\j/2-1/2,\j/2-1/2+8*\i);
}
}
\foreach \j in {0,...,7}
{
 \fill(5.5,2+\j*4)circle(1pt);
 \fill(6,2+\j*4)circle(1pt);
 \fill(6.5,2+\j*4)circle(1pt);
}
\node at (0.25,-1.5){$0$};
\node at (32,-1.5){$1$};
\node at (64,-1.5){$2$};
\node at (0.25,33.5){${\mathbf i}$};

\draw[blue,thick] (64,0) -- (64,32) -- (32,32) -- (32,0) -- (64,0);

\foreach \j in {2,...,6}
{    \draw[blue, thick] (32+2*\j,2*\j) -- (64-2*\j,2*\j) -- (64-2*\j,32-2*\j) -- (32+2*\j-2,32-2*\j) --(32+2*\j-2,2*\j-2);
}

\draw[blue, dashed, thick] (32+2,2) -- (64-2,2) -- (64-2,32-2) -- (32+1,32-2);

\end{tikzpicture}
&
\begin{tikzpicture}[x=0.2cm,y=0.2cm,scale=0.55]
\draw[gray,thick] (64,0) -- (64,32) -- (0,32) -- (0,0) -- (64,0);
\foreach \j in {0,1}
{ \draw[gray, thick] (32,\j*16) -- (32,16+\j*16) -- (16,16+\j*16) -- (16,\j*16) --(32,\j*16);
}
\foreach \j in {0,...,3}
{
    \draw[gray, thick] (16,\j*8) -- (16,8+\j*8) -- (8,8+\j*8) -- (8,\j*8) --(16,\j*8);
}
\foreach \j in {0,...,7}
{
    \draw[gray, thick] (8,\j*4) -- (8,4+\j*4) -- (4,4+\j*4) -- (4,\j*4) --(8,\j*4);
}
\foreach \j in {0,...,15}
{
    \draw[gray, thick] (4,\j*2) -- (4,2+\j*2) -- (2,2+\j*2) -- (2,\j*2) --(4,\j*2);
}
\foreach \j in {0,...,31}
{
    \draw[gray, thick] (2,\j*1) -- (2,1+\j*1) -- (1,1+\j*1) -- (1,\j*1) --(2,\j*1);
}
\foreach \j in {0,...,63}
{
    \draw[gray, thick] (1,\j*1/2) -- (1,1/2+\j*1/2) -- (1/2,1/2+\j*1/2) -- (1/2,\j*1/2) --(1,\j*1/2);
}

\foreach \j in {0,...,127}
{
    \draw[gray, thick] (1/2,\j*1/4) -- (1/2,1/4+\j*1/4) -- (1/4,1/4+\j*1/4) -- (1/4,\j*1/4) --(1/2,\j*1/4);
}
\foreach \j in {0,...,255}
{
    \draw[gray, thick] (1/4,\j*1/8) -- (1/4,1/8+\j*1/8) -- (1/8,1/8+\j*1/8) -- (1/8,\j*1/8) --(1/4,\j*1/8);
}

\node at (0.25,-1.5){$0$};
\node at (32,-1.5){$1$};
\node at (64,-1.5){$2$};
\node at (0.25,33.5){${\mathbf i}$};
\end{tikzpicture}
\end{tabular}
\vskip -0.25cm
\caption{(left): the $E$-continuum $K$; (right): the only non-degenerate atom.}\label{negative}
\vskip -0.25cm
\end{figure}
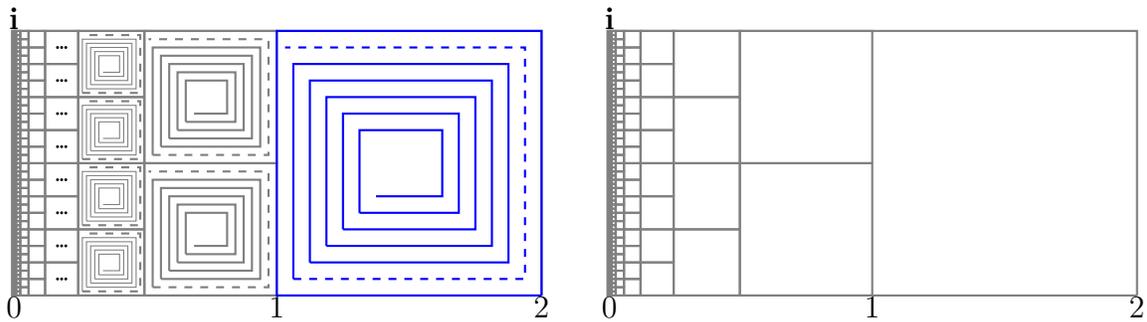
Let $f_1(z)=\frac{z}{2}$ and $f_2(z)=\frac{z+{\mathbf i}}{2}$. Let $K_0=\overline{Y}$. For all $n\ge1$, let $K_n=f_1\left(K_{n-1}\right)\cup f_2\left(K_{n-1}\right)$. Then $K_0,K_1,\ldots$ is an infinite sequence of continua converging to the segment $[0,{\mathbf i}]$ under Hausdorff distance. Clearly,
\[K=\left(\bigcup\limits_{n\ge0}K_n\right)\cup\{s{\mathbf i}: 0\le s\le1\}\]
is an $E$-continuum. See left part of Figure \ref{negative}.  Let $L_0=\partial X$. For all $n\ge1$, let $L_n=f_1\left(L_{n-1}\right)\cup f_2\left(L_{n-1}\right)$. Then $L_0,L_1,\ldots$ is an infinite sequence of continua converging to the segment $[0,{\mathbf i}]$ under Hausdorff distance. Similarly, we see that
\[L=\left(\bigcup\limits_{n\ge0}L_n\right)\cup\{s{\mathbf i}: 0\le s\le1\}\]
is also an $E$-continuum. See right part of Figure \ref{negative}. Moreover, the continuum $K$ has exactly one atom of order $1$ that is not a singleton. This atom equals $L$. Thus we have
\[
\lambda_K(x)=\left\{\begin{array}{ll}1&x\in L\\ 0& {otherwise}\end{array}\right.\ \text{and}\
\tilde{\lambda}_K(x)=\left\{\begin{array}{ll} 
1& x\in L\setminus[0,{\mathbf i}]\\ 0& {otherwise}.\end{array}\right.
\]
\end{exam}

\begin{exam}\label{finite-comp}
Let $\mathcal{C}$ denote Cantor's ternary set. Let $U_1\subset\hat{\mathbb{C}}$ be the domain, not containing $\infty$, whose boundary consists of  $[0,1]\times{\bf i}\mathcal{C}=\{t+s{\bf i}: 0\le t\le 1, s\in\mathcal{C}\}$ and  $\partial\left([0,\frac43]\times[0,{\bf i}]\right)$.
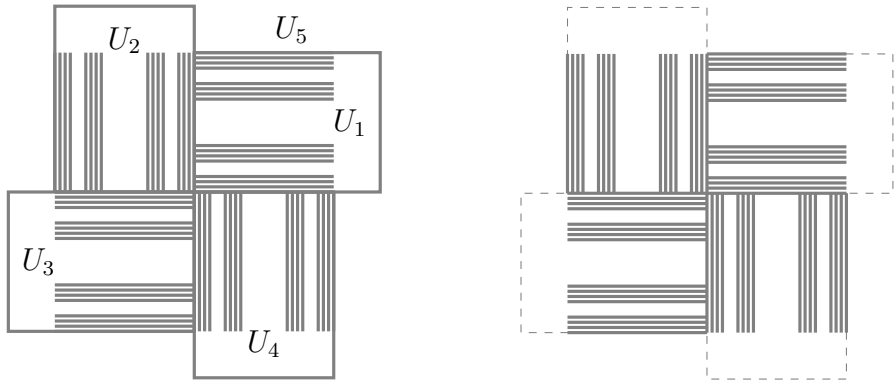
\begin{figure}[ht]
\vspace{-0.05cm}
\begin{center}
\begin{tabular}{ccc}
\begin{tikzpicture}[x=1cm,y=1cm,scale=0.618]
\draw[gray,very thick] (0,0) -- (3,0)-- (3,4)-- (0,4)-- (0,0);
\draw[gray,very thick] (3,0) -- (-1,0)-- (-1,-3)-- (3,-3)-- (3,0);
\draw[gray,very thick] (3,0) -- (3,-4)-- (6,-4)-- (6,0)-- (3,0);
\draw[gray,very thick] (3,0) -- (7,0)-- (7,3)-- (3,3)-- (3,0);
\foreach \i in {0,...,3}
{
    \draw[gray,very thick] (3*\i/27,0) -- (3*\i/27,3);
    \draw[gray,very thick] (6/9+3*\i/27,0) -- (6/9+3*\i/27,3);
     \draw[gray,very thick] (2+3*\i/27,0) -- (2+3*\i/27,3);
    \draw[gray,very thick] (2+6/9+3*\i/27,0) -- (2+6/9+3*\i/27,3);
}

\foreach \i in {0,...,3}
{
    \draw[gray,very thick] (0,3*\i/27-3) -- (3,3*\i/27-3);
    \draw[gray,very thick] (0,6/9+3*\i/27-3) -- (3,6/9+3*\i/27-3);
     \draw[gray,very thick] (0,2+3*\i/27-3) -- (3,2+3*\i/27-3);
    \draw[gray,very thick] (0,2+6/9+3*\i/27-3) -- (3,2+6/9+3*\i/27-3);
}

\foreach \i in {0,...,3}
{
    \draw[gray,very thick] (3+3*\i/27,0) -- (3+3*\i/27,-3);
    \draw[gray,very thick] (3+6/9+3*\i/27,0) -- (3+6/9+3*\i/27,-3);
     \draw[gray,very thick] (3+2+3*\i/27,0) -- (3+2+3*\i/27,-3);
    \draw[gray,very thick] (3+2+6/9+3*\i/27,0) -- (3+2+6/9+3*\i/27,-3);
}

\foreach \i in {0,...,3}
{
    \draw[gray,very thick] (3,3*\i/27) -- (3+3,3*\i/27);
    \draw[gray,very thick] (3,6/9+3*\i/27) -- (3+3,6/9+3*\i/27);
     \draw[gray,very thick] (3,2+3*\i/27) -- (3+3,2+3*\i/27);
    \draw[gray,very thick] (3,2+6/9+3*\i/27) -- (3+3,2+6/9+3*\i/27);
}

\draw(1.5,2.75) node[above]{$U_2$};
\draw(0.25,-1.5) node[left]{$U_3$};
\draw(4.5,-2.75) node[below]{$U_4$};
\draw(5.75,1.5) node[right]{$U_1$};
\draw(4.5,3.5) node[right]{$U_5$};
\end{tikzpicture}
&&\hskip 1.0cm
\begin{tikzpicture}[x=1cm,y=1cm,scale=0.618]
\draw[gray,dashed] (0,0) -- (3,0)-- (3,4)-- (0,4)-- (0,0);
\draw[gray,dashed] (3,0) -- (-1,0)-- (-1,-3)-- (3,-3)-- (3,0);
\draw[gray,dashed] (3,0) -- (3,-4)-- (6,-4)-- (6,0)-- (3,0);
\draw[gray,dashed] (3,0) -- (7,0)-- (7,3)-- (3,3)-- (3,0);

\foreach \i in {0,...,3}
{
    \draw[gray,very thick] (3*\i/27,0) -- (3*\i/27,3);
    \draw[gray,very thick] (6/9+3*\i/27,0) -- (6/9+3*\i/27,3);
     \draw[gray,very thick] (2+3*\i/27,0) -- (2+3*\i/27,3);
    \draw[gray,very thick] (2+6/9+3*\i/27,0) -- (2+6/9+3*\i/27,3);
}

\foreach \i in {0,...,3}
{
    \draw[gray,very thick] (0,3*\i/27-3) -- (3,3*\i/27-3);
    \draw[gray,very thick] (0,6/9+3*\i/27-3) -- (3,6/9+3*\i/27-3);
     \draw[gray,very thick] (0,2+3*\i/27-3) -- (3,2+3*\i/27-3);
    \draw[gray,very thick] (0,2+6/9+3*\i/27-3) -- (3,2+6/9+3*\i/27-3);
}

\foreach \i in {0,...,3}
{
    \draw[gray,very thick] (3+3*\i/27,0) -- (3+3*\i/27,-3);
    \draw[gray,very thick] (3+6/9+3*\i/27,0) -- (3+6/9+3*\i/27,-3);
     \draw[gray,very thick] (3+2+3*\i/27,0) -- (3+2+3*\i/27,-3);
    \draw[gray,very thick] (3+2+6/9+3*\i/27,0) -- (3+2+6/9+3*\i/27,-3);
}

\foreach \i in {0,...,3}
{
    \draw[gray,very thick] (3,3*\i/27) -- (3+3,3*\i/27);
    \draw[gray,very thick] (3,6/9+3*\i/27) -- (3+3,6/9+3*\i/27);
     \draw[gray,very thick] (3,2+3*\i/27) -- (3+3,2+3*\i/27);
    \draw[gray,very thick] (3,2+6/9+3*\i/27) -- (3+3,2+6/9+3*\i/27);
}
\end{tikzpicture}
\end{tabular}
\end{center}
\vskip -0.5cm
\caption{The continuum $K$ and the only non-degenerate atom $d\in\Dc_K$.}\label{finite-comp-pic}
\end{figure}
For $2\le j\le 4$ let $U_j=f^{j-1}(U_1)$, where $f(z)={\bf i}z$. See the left part of Figure \ref{finite-comp-pic}. Then $K=\bigcup_i\partial U_i$ is a continuum, whose complementary components are $U_1,\ldots, U_4, U_5$. Here $U_5$ is the one containing $\infty$. Moreover, the only non-degenerate atom of $K$ is
$\displaystyle d=\bigcup_{j=0}^3 f^j([0,1]\times{\bf i}\mathcal{C})$.
Since the continuum $d$ has a single atom, which is itself, we have
$\lambda_K(x)=\left\{\begin{array}{ll}\infty& x\in d\\ 0&{otherwise}.\end{array}\right.$
On the other hand, by the construction of $U_1,\ldots, U_4$ and $U_5$, we also have
$\tilde{\lambda}_K(x)=\left\{\begin{array}{ll}1& x\in d\\ 0&{otherwise}.\end{array}\right.$
Consequently, we have $\lambda_K(x)-\tilde{\lambda}_K(x)=\left\{\begin{array}{ll} \infty& x\in d\\ 0& {otherwise}.\end{array}\right.$

\end{exam}


Then we continue to find planar continua $K$, trying to describe possible relations between $\lambda_K$ and $\lambda_{\partial K}$. The first one is Peano continuum $K$ but its boundary is a continuum that is not locally connected. Therefore, $\lambda_{\partial K}(x)\ge\lambda_K(x)$ for all $x\in\hat{\mathbb{C}}$ and $\lambda_{\partial K}(x)>\lambda_K(x)$ for uncountably many $x$.

\begin{exam}\label{bd_larger}
Consider a spiral made of broken lines, lying in the open square $W=\{t+s{\mathbf i}: 0< t,s<1\}\subset\hat{\mathbb{C}}$, which converges to $\partial W$. See  the left of Figure \ref{spiral}
\begin{figure}[ht]
\vspace{-0.05cm}
\center{\begin{tabular}{ccc}
\begin{tikzpicture}[x=0.2cm,y=0.2cm,scale=0.6]

\draw[blue, thick] (-16,0) -- (16,0) -- (16,32) -- (-16,32) --(-16,0);
\foreach \j in {1,2}
\foreach \k in {1,2}
{
\draw[blue, ultra  thick] (-4,-4*\j+16) -- (4*\j,-4*\j+16) -- (4*\j,4*\j+16) --(-4*\j-4,4*\j+16)-- (-4*\j-4,-4*\j+12) --(0,-4*\j+12);
 }
\draw[blue,ultra thick, dashed] (0,4) -- (13.0,4);

\draw[blue,ultra thick, dashed] (12.75,4.0) -- (12.75,27.5);

\end{tikzpicture}
\hspace{0.25cm}
&
\hspace{0.25cm}
\begin{tikzpicture}[x=0.2cm,y=0.2cm,scale=0.6]






\fill[gray!80] (-16,0) -- (16,0) -- (16,32) -- (-16,32) --(-16,0);
\draw[blue, thick] (-16,0) -- (16,0) -- (16,32) -- (-16,32) --(-16,0);
\foreach \j in {1,2}
\foreach \k in {1,2}
{
\draw[gray!16, ultra  thick] (-4,-4*\j+16) -- (4*\j,-4*\j+16) -- (4*\j,4*\j+16) --(-4*\j-4,4*\j+16)-- (-4*\j-4,-4*\j+12) --(0,-4*\j+12);
\draw[gray!16, ultra  thick] (-4,-4*\j+16-\k*0.45) -- (4*\j+\k*0.45,-4*\j+16-\k*0.45) -- (4*\j+\k*0.45,4*\j+16+\k*0.45) --(-4*\j-4-\k*0.45,4*\j+16+\k*0.45)-- (-4*\j-4-\k*0.45,-4*\j+12-\k*0.45) --(0,-4*\j+12-\k*0.45);
 }

\draw[gray!16,ultra thick, dashed] (0,4) -- (13.2,4); \draw[gray!16,ultra thick, dashed] (0,3.55) -- (13.2,3.55); \draw[gray!16,ultra thick, dashed] (0,3.1) -- (13.2,3.1);


\end{tikzpicture}
\hspace{0.25cm}
&
\hspace{0.25cm}
\begin{tikzpicture}[x=0.2cm,y=0.2cm,scale=0.6]
\fill[gray!80] (-16,0) -- (16,0) -- (16,32) -- (-16,32) --(-16,0);
\draw[blue, thick] (-16,0) -- (16,0) -- (16,32) -- (-16,32) --(-16,0);
\foreach \j in {1,2}
\foreach \k in {1,2}
{
\draw[gray!16, ultra  thick] (-4,-4*\j+16) -- (4*\j,-4*\j+16) -- (4*\j,4*\j+16) --(-4*\j-4,4*\j+16)-- (-4*\j-4,-4*\j+12) --(0,-4*\j+12);
\draw[gray!16, ultra  thick] (-4,-4*\j+16-\k*0.45) -- (4*\j+\k*0.45,-4*\j+16-\k*0.45) -- (4*\j+\k*0.45,4*\j+16+\k*0.45) --(-4*\j-4-\k*0.45,4*\j+16+\k*0.45)-- (-4*\j-4-\k*0.45,-4*\j+12-\k*0.45) --(0,-4*\j+12-\k*0.45);
 }
\draw[gray!16,ultra thick, dashed] (0,3.8) -- (13.2,3.8);
\draw[gray!16,ultra thick, dashed] (0,3.55) -- (13.2,3.55);
\draw[gray!16,ultra thick, dashed] (0,3.1) -- (13.2,3.1);


\foreach \j in {1,2}
\foreach \k in {2}
{
\draw[blue] (-4,-4*\j+16+0.2) -- (4*\j-0.2,-4*\j+16+0.2) -- (4*\j-0.2,4*\j+16-0.2) --(-4*\j-4+0.2,4*\j+16-0.2)-- (-4*\j-4+0.2,-4*\j+12+0.2) --(0,-4*\j+12+0.2);
\draw[blue] (-4,-4*\j+16-\k*0.45-0.3) -- (4*\j+\k*0.45+0.3,-4*\j+16-\k*0.45-0.3) -- (4*\j+\k*0.45+0.3,4*\j+16+\k*0.45+0.3) --(-4*\j-4-\k*0.45-0.3,4*\j+16+\k*0.45+0.3)-- (-4*\j-4-\k*0.45-0.3,-4*\j+12-\k*0.45-0.3) --(0,-4*\j+12-\k*0.45-0.3);
 }

\foreach \i in {0,1,2}
{
\draw[blue] (-4+3.9*\i,12.25) -- (-4+3.9*\i,12-1.25);
\draw[blue] (3.8,13.8+3*\i) -- (5.2,13.8+3*\i);
\draw[blue] (-7.8,13.8+3*\i) -- (-9.2,13.8+3*\i);
\draw[blue] (-7.8,12-1.9*\i) -- (-9.2,12-1.9*\i);
}

\foreach \i in {0,...,3}
{
\draw[blue] (-5.6+3.6*\i,19.8) -- (-5.6+3.6*\i,21.2);
\draw[blue] (-7.8+1.2*\i,8.2) -- (-7.8+1.2*\i,6.8);
\draw[blue] (-3+1.2*\i,8.2) -- (-3+1.2*\i,6.8);
\draw[blue] (1.8+1.0*\i,8.2) -- (1.8+1.0*\i,6.8);
\draw[blue] (4.8+1.0*\i,8.2) -- (4.8+1.0*\i,6.8);
\draw[blue] (7.8,7.8+1.0*\i) -- (9.2,7.8+1.0*\i);
\draw[blue] (7.8,11.8+1.0*\i) -- (9.2,11.8+1.0*\i);
\draw[blue] (7.8,15.8+0.8*\i) -- (9.2,15.8+0.8*\i);
\draw[blue] (7.8,19.2+0.8*\i) -- (9.2,19.2+0.8*\i);
\draw[blue] (7.8,22.4+0.7*\i) -- (9.2,22.4+0.7*\i);
\draw[blue] (7.8-0.8*\i,23.8) -- (7.8-0.8*\i,25.2);
\draw[blue] (4.6-0.8*\i,23.8) -- (4.6-0.8*\i,25.2);
\draw[blue] (1.4-0.8*\i,23.8) -- (1.4-0.8*\i,25.2);
\draw[blue] (-1.8-0.8*\i,23.8) -- (-1.8-0.8*\i,25.2);
\draw[blue] (-5-0.8*\i,23.8) -- (-5-0.8*\i,25.2);
\fill[blue!62](-9-\i,24.5) circle(1.8pt);
}

\end{tikzpicture}

\end{tabular}
}
\caption{A Peano continuum $K$ whose boundary is not locally connected.}\label{spiral}
\end{figure}
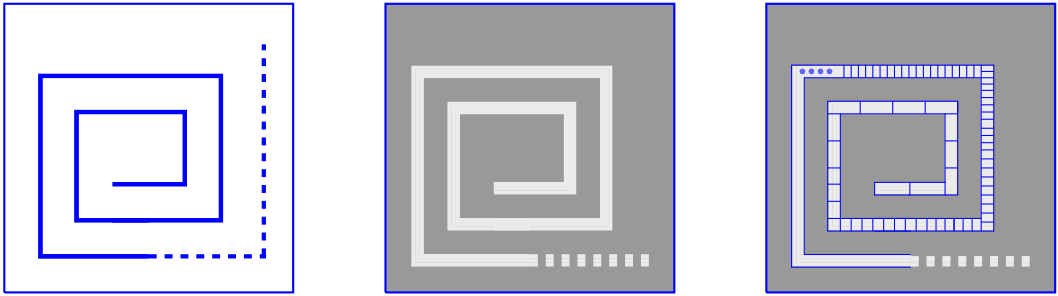
We may thicken the spiral to an embedding $h: [0,\infty)\times[0,1]\rightarrow W$, of the unbounded strip $U=[0,\infty)\times[0,1]$. Such an embedding may be chosen appropriately, so that $h(\partial U)$ consists of countably many segments. Then, we obtain a continuum
$K_0=\overline{W}\setminus h(U)$. See the middle part of Figure \ref{spiral}. Clearly, the continuum $K_0$ is not locally connected on $\partial W$; and it is locally connected at all the other points. Now, divide the thickened spiral $h(U)$ into smaller and smaller quadruples, which are depicted in the right part of Figure \ref{spiral} as small rectangles. Let $K$ be the union of $K_0$ with the newly added bars, used in the above mentioned division. Then $K$ is locally connected everywhere hence is a Peano continuum. However, its boundary $\partial K$ is not locally connected on $\partial W$ and is locally connected elsewhere. Therefore, we have
\[\lambda_K(x)\equiv 0\quad{and}\quad
\lambda_{\partial K}(x)=\left\{\begin{array}{ll}1 & x\in\partial W\\
0& x\notin\partial W.\end{array}\right.\]
\end{exam}

\begin{exam}\label{bd_smaller}
Let the continuum $K$ be defined as in Example \ref{bd_larger}. Let $f_j(z)=\frac{z}{2}+\frac{j-1}{2}{\mathbf i}$ for $j=1,2$. For any compact set $X\subset\hat{\bbC}$, put $\Phi(X)=f_1(X)\cup f_2(X)$. We will use the continuum $K$ and the mapping $\Phi$ to construct a continuum $L$. See Figure \ref{spiral-double}.
\begin{figure}[ht]
\vskip -0.05cm
\center{
\begin{tikzpicture}[x=0.2cm,y=0.2cm,scale=0.8]
\draw[gray,thick] (32,0) -- (0,0) -- (0,32) -- (32,32);
\draw[gray,thick] (32,0) -- (64,0) -- (64,32) -- (32,32) -- (32,0);
\foreach \j in {0,1}
{ \draw[gray, thick] (32,16+\j*16) -- (16,16+\j*16) -- (16,\j*16) --(32,\j*16);
}
\foreach \j in {0,...,3}
{
    \draw[gray, thick] (16,\j*8) -- (16,8+\j*8) -- (8,8+\j*8) -- (8,\j*8) --(16,\j*8);
}
\foreach \j in {0,...,7}
{
    \draw[gray, thick] (8,\j*4) -- (8,4+\j*4) -- (4,4+\j*4) -- (4,\j*4) --(8,\j*4);
}
\foreach \j in {0,...,15}
{
    \draw[gray, thick] (4,\j*2) -- (4,2+\j*2) -- (2,2+\j*2) -- (2,\j*2) --(4,\j*2);
}
\foreach \j in {0,...,31}
{
    \draw[gray, thick] (2,\j*1) -- (2,1+\j*1) -- (1,1+\j*1) -- (1,\j*1) --(2,\j*1);
}
\foreach \j in {0,...,63}
{
    \draw[gray, thick] (1,\j*1/2) -- (1,1/2+\j*1/2) -- (1/2,1/2+\j*1/2) -- (1/2,\j*1/2) --(1,\j*1/2);
}

\foreach \j in {0,...,127}
{
    \draw[gray, thick] (1/2,\j*1/4) -- (1/2,1/4+\j*1/4) -- (1/4,1/4+\j*1/4) -- (1/4,\j*1/4) --(1/2,\j*1/4);
}
\foreach \j in {0,...,255}
{
    \draw[gray, thick] (1/4,\j*1/8) -- (1/4,1/8+\j*1/8) -- (1/8,1/8+\j*1/8) -- (1/8,\j*1/8) --(1/4,\j*1/8);
}


 \draw[gray, thick] (-4,-4) -- (64+4,-4) -- (64+4,32+4) --(-3,32+4)--(-3,-3);
 \draw[gray, thick] (-3,-3) -- (64+3,-3) -- (64+3,32+2) --(-2,32+2) -- (-2,-2);
  \draw[gray, thick] (-2,-2) -- (64+2,-2) -- (64+2,32+1) --(-1,32+1) -- (-1,-1);
 \draw[gray, thick, dashed] (-1,-1)--(64+1,-1)--(64+1,30);

\node at (33,2) {$1$};  \fill(32,0) circle(2pt);
\node at (63,2) {$2$};  \fill(64,0) circle(2pt);
\node at (61,30) {$2\!+\!{\mathbf i}$};  \fill(64,32) circle(2pt);

\node at (48,16) {$\partial K+1$};
\node at (24,8) {$f_1(\partial K\!+\!1)$};
\node at (24,24) {$f_2(\partial K\!+\!1)$};

\draw[gray, very thin] (12,4) -- (-12.5,8);
\node at (-15,10) {$f_1\circ f_1(K\!+\!1)$};
\draw[gray, very thin] (12,12) -- (-12.5,16);
\node at (-15,18) {$f_1\circ f_2(K\!+\!1)$};

\draw[gray, very thin] (12,20) -- (-12.5,24);
\node at (-15,26) {$f_2\circ f_1(K\!+\!1)$};
\end{tikzpicture}
}\vskip -0.5cm
\caption{Relative locations of $\partial K+1$, $\Phi^{1}(\partial K+1)$  and $\Phi^{2}(K+1)$.}\label{spiral-double}
\end{figure}
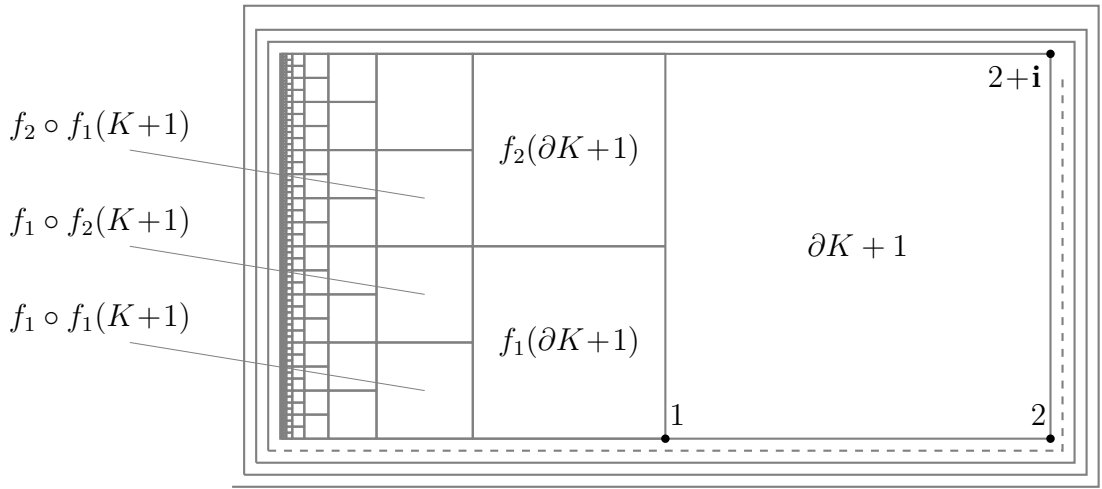
The continuum $L$ consists of five parts:
\begin{enumerate}
\item the segment $[0,{\mathbf i}]=\{s{\bf i}: 0\le s\le1\}$;
\item a spiral converging to the boundary of $[0,2]\times[0,{\mathbf i}]$;
\item $\partial K+1=\{z+1: z\in \partial K\}$;
\item $\Phi^{2n}(K+1)$ for all integers $n\ge1$; and
\item $\Phi^{2n-1}(\partial K+1)$ for all integers $n\ge1$.
\end{enumerate}
On the one hand, we can directly check that $L$ has a unique non-degenerate atom $d$, which consists of the following four parts: (1) the segment $[0,{\mathbf i}]$; (2) the boundary of $[1,2]\times[0,{\mathbf i}]$, denoted as $A$; (3) $\Phi^{2n-1}(A)$ for all integers $n\ge1$; and (4) the boundary of $[2^{-2n},2^{-2n+1}]\times[0,{\mathbf i}]$ for all integers $n\ge1$.
On the other hand, the boundary $\partial L$ has a unique non-degenerate atom $d^*$, which is the union of $A$, the segment $[0,{\mathbf i}]$, and $\Phi^{n}(A)$ for all integers $n\ge1$. See Figure \ref{atoms} for a depiction of  $d$ and $d^*$.
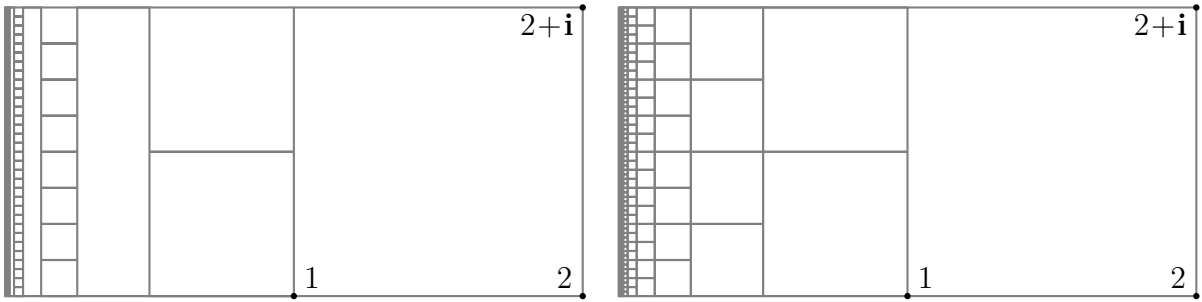
\begin{figure}[ht]
\vspace{-0.25cm}
\center{\begin{tabular}{cc}
\begin{tikzpicture}[x=0.2cm,y=0.2cm,scale=0.6]
\draw[gray,thick] (32,0) -- (0,0) -- (0,32) -- (32,32);
\draw[gray,thick] (32,0) -- (64,0) -- (64,32) -- (32,32) -- (32,0);

\foreach \j in {0,1}
{ \draw[gray, thick] (32,\j*16) -- (16,\j*16) -- (16,\j*16) --(32,\j*16);
}

\foreach \j in  {0}  
{
    \draw[gray, thick] (16,+\j*8) -- (16,32+\j*8) -- (8,32+\j*8) -- (8,\j*8) --(16,\j*8);
}

\foreach \j in {0,...,7}
{
    \draw[gray, thick] (8,\j*4) -- (8,\j*4) -- (4,\j*4) -- (4,\j*4) --(8,\j*4);
}

\foreach \j in {0}  
{
    \draw[gray, thick] (4,\j*2) -- (4,32+\j*2) -- (2,32+\j*2) -- (2,\j*2) --(4,\j*2);
}

\foreach \j in {0,...,31}
{
    \draw[gray, thick] (2,\j*1) -- (2,\j*1) -- (1,\j*1) -- (1,\j*1) --(2,\j*1);
}

\foreach \j in {0}  
{
    \draw[gray, thick] (1,\j*1/2) -- (1,32+\j*1/2) -- (1/2,32+\j*1/2) -- (1/2,\j*1/2) --(1,\j*1/2);
}

\foreach \j in {0,...,127}
{
    \draw[gray, thick] (1/2,\j*1/4) -- (1/2,1/4+\j*1/4) -- (1/4,1/4+\j*1/4) -- (1/4,\j*1/4) --(1/2,\j*1/4);
}

\foreach \j in {0,...,255}
{
    \draw[gray, thick] (1/4,\j*1/8) -- (1/4,1/8+\j*1/8) -- (1/8,1/8+\j*1/8) -- (1/8,\j*1/8) --(1/4,\j*1/8);
}

\node at (34,2) {$1$};  \fill(32,0) circle(2pt);
\node at (62,2) {$2$};  \fill(64,0) circle(2pt);
\node at (60,30) {$2\!+\!{\mathbf i}$};  \fill(64,32) circle(2pt);

\end{tikzpicture}

& \begin{tikzpicture}[x=0.2cm,y=0.2cm,scale=0.6]
\draw[gray,thick] (32,0) -- (0,0) -- (0,32) -- (32,32);
\draw[gray,thick] (32,0) -- (64,0) -- (64,32) -- (32,32) -- (32,0);
\foreach \j in {0,1}
{ \draw[gray, thick] (32,16+\j*16) -- (16,16+\j*16) -- (16,\j*16) --(32,\j*16);
}
\foreach \j in {0,...,3}
{
    \draw[gray, thick] (16,\j*8) -- (16,8+\j*8) -- (8,8+\j*8) -- (8,\j*8) --(16,\j*8);
}
\foreach \j in {0,...,7}
{
    \draw[gray, thick] (8,\j*4) -- (8,4+\j*4) -- (4,4+\j*4) -- (4,\j*4) --(8,\j*4);
}
\foreach \j in {0,...,15}
{
    \draw[gray, thick] (4,\j*2) -- (4,2+\j*2) -- (2,2+\j*2) -- (2,\j*2) --(4,\j*2);
}
\foreach \j in {0,...,31}
{
    \draw[gray, thick] (2,\j*1) -- (2,1+\j*1) -- (1,1+\j*1) -- (1,\j*1) --(2,\j*1);
}
\foreach \j in {0,...,63}
{
    \draw[gray, thick] (1,\j*1/2) -- (1,1/2+\j*1/2) -- (1/2,1/2+\j*1/2) -- (1/2,\j*1/2) --(1,\j*1/2);
}

\foreach \j in {0,...,127}
{
    \draw[gray, thick] (1/2,\j*1/4) -- (1/2,1/4+\j*1/4) -- (1/4,1/4+\j*1/4) -- (1/4,\j*1/4) --(1/2,\j*1/4);
}
\foreach \j in {0,...,255}
{
    \draw[gray, thick] (1/4,\j*1/8) -- (1/4,1/8+\j*1/8) -- (1/8,1/8+\j*1/8) -- (1/8,\j*1/8) --(1/4,\j*1/8);
}

\node at (34,2) {$1$};  \fill(32,0) circle(2pt);
\node at (62,2) {$2$};  \fill(64,0) circle(2pt);
\node at (60,30) {$2\!+\!{\mathbf i}$};  \fill(64,32) circle(2pt);

\end{tikzpicture}
\end{tabular}
}\vskip -0.0cm
\caption{A depiction of $d$ and $d^*$.}\label{atoms}
\end{figure}
The atom $d^*$ (of $\partial L$) is a Peano continuum and contains $d$. However, the atom $d$ (of $L$) is not locally connected at points $s{\mathbf i}$ with $0<s<1$ and is locally connected elsewhere. Therefore, we can compute their lambda functions as follows:
\[
\lambda_L(x)=\left\{\begin{array}{ll}2& x\in[0,{\mathbf i}]\\
1 & x\in d\setminus[0,{\mathbf i}]\\
0& {otherwise}\end{array}\right.\quad{and}\quad \lambda_{\partial L}(x)=\left\{\begin{array}{ll}1& x\in[0,{\mathbf i}]\\
1 & x\in d^*\setminus[0,{\mathbf i}]\\
0& {otherwise}.\end{array}\right.
\]
From these we further infer that
\[
\lambda_L(x)-\lambda_{\partial L}(x)=\left\{\begin{array}{ll} 1& x\in[0,{\mathbf i}]\\ -1& x\in d^*\setminus d\\ 0&{otherwise}.\end{array}\right.
\]
\end{exam}

Note that it is still unknown {\bf whether  there is a compactum $K\subset\hat{\mathbb C}$ such that $\lambda_K(x)\ge\lambda_{\partial K}(x)$ for all $x\in\hat{\mathbb{C}}$ and  $\lambda_K(x)>\lambda_{\partial K}(x)$ for at least one point $x\in\partial K$}.


To conclude this section, we now consider  unions and intersections of specific Peano compacta in the plane. We will find concrete Peano continua in the plane, say $X$ and $Y$, such that $X\cap Y$ is a continuum that is not locally connected. Notice that $X\cup Y$ is always a Peano continuum.

\begin{exam}\label{cap-peano}
Let $M$ be the union of $[0,1]\times\{0\}$ with the vertical segments $\{0\}\times[0,1]$ and $\{2^{-k}\}\times[0,1]$ for integers $k\ge0$. Then $M$ is a continuum and is not locally connected at points on $\{0\}\times(0,1]$; moreover, we have
\[\lambda_M(x)=\left\{\begin{array}{ll}1& x\in \{t{\bf i}: 0\le t\le 1\}\\ 0 & otherwise\end{array}\right.
\]
We will construct two Peano continua $X$ and $Y$ satisfying $X\cap Y=M$. To this end, for all integers $k\ge1$ we put
\[A_k=\bigcup_{j=1}^{2^k-1}\left[0,2^{-k+1}\right]\times\left\{j2^{-k}\right\}.\]
Then $X=M\cup\left(\bigcup_kA_k\right)$ is a Peano continuum.
\begin{figure}[ht]
\vskip -0.05cm
\begin{center}
\begin{tabular}{ccc}
\begin{tikzpicture}[x=1cm,y=1cm,scale=0.8]
\foreach \i in {1,...,3}
{
\draw[red] (0,1.296*\i) -- (2.592,1.296*\i);
}

\foreach \i in {1,...,7}
{
\draw[red] (0,0.648*\i) -- (1.296,0.648*\i);
}

\foreach \i in {1,...,15}
{
\draw[red] (0,0.324*\i) -- (0.648,0.324*\i);
}

\foreach \i in {1,...,31}
{
\draw[red] (0,0.162*\i) -- (0.324,0.162*\i);
}

\foreach \i in {1,...,63}
{
\draw[red] (0,0.081*\i) -- (0.162,0.081*\i);
}

\draw[blue,thick] (0,0) -- (0,5.184);
\draw[blue,thick] (0,0) -- (5.184,0);
\draw[red] (2.592,2.592) -- (5.184,2.592);

\foreach \i in {1,2,4,8,16,32}
{
\draw[blue,thick] (5.184/\i,0) -- (5.184/\i,5.184);
}

\fill[black] (5.184,0) circle (0.35ex); \draw[purple] (5.184,0) node[right]{$1$};
\fill[black] (5.184,5.184) circle (0.35ex); \draw[purple] (5.184,5.184) node[right]{$1+{\bf i}$};
\fill[black] (0,0) circle (0.35ex); \draw[purple] (0,0) node[left]{$0$};
\end{tikzpicture}\hspace{0.25cm}
&
&\hspace{0.25cm}
\begin{tikzpicture}[x=1cm,y=1cm,scale=0.8]
\foreach \i in {1,...,2}
{
\draw[red] (0,1.728*\i) -- (5.184,1.728*\i);
}

\foreach \i in {1,...,8}
{
\draw[red] (0,0.576*\i) -- (2.592,0.576*\i);
}

\foreach \i in {1,...,26}
{
\draw[red] (0,0.192*\i) -- (1.296,0.192*\i);
}

\foreach \i in {1,...,80}
{
\draw[red] (0,0.064*\i) -- (0.648,0.064*\i);
}

\draw[blue,thick] (0,0) -- (0,5.184);
\draw[blue,thick] (0,0) -- (5.184,0);

\foreach \i in {1,2,4,8}
{
\draw[blue,thick] (5.184/\i,0) -- (5.184/\i,5.184);
}

\fill[black] (5.184,0) circle (0.35ex);       \draw[purple] (5.184,0) node[right]{$1$};
\fill[black] (5.184,5.184) circle (0.35ex);   \draw[purple] (5.184,5.184) node[right]{$1+{\bf i}$};
\fill[black] (0,0) circle (0.35ex);           \draw[purple] (0,0) node[left]{$0$};
\end{tikzpicture}
\end{tabular}
\end{center}
\vskip -0.5cm
\caption{\small Two Peano continua that intersect at a non-locally connected continuum.}\label{peano-cap}
\end{figure}
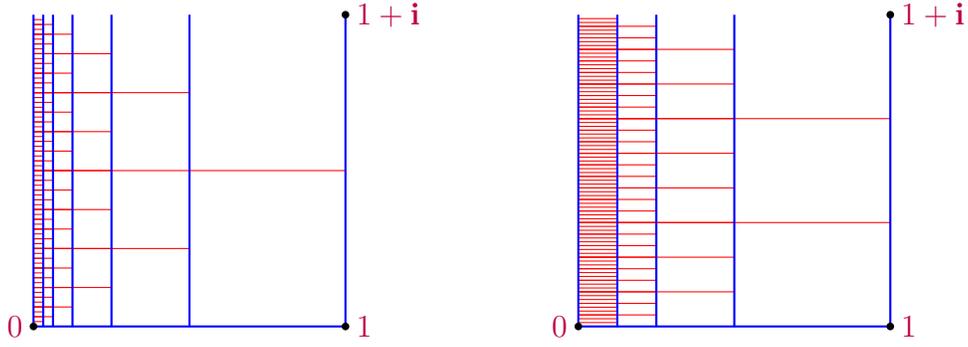
See left part of Figure \ref{peano-cap} for a rough approximate of $X$. Similarly, if for every $k\ge1$ we set
\[B_k=\bigcup_{j=1}^{3^k-1}\left[0,2^{-k+1}\right]\times\left\{j3^{-k}\right\},\]
then $\displaystyle Y=M\cup\left(\bigcup\limits_kB_k\right)$ is also a Peano continuum. See right part of Figure \ref{peano-cap} for a rough approximate of $Y$. Moreover, we have $X\cap Y=M$.
\end{exam}

We also find Peano compacta $X$ and $Y$ such that the union $X\cup Y$ is not a Peano compactum, although the intersection $X\cap Y$ is always a Peano compactum. We will use {\em fractal squares} to construct two such compacta.
Here a {\bf fractal square of order $n\ge2$} is the attractor of an iterated function system
$\displaystyle \Fc_\Dc:=\left\{f_d(x)=\frac{x+d}{n}: d\in\Dc\right\}$
for some $\Dc\subset\{0,1,\ldots,n-1\}^2$ which contains at least $2$ and at most $n^2-1$ elements.
For general theory on iterated function systems, we refer to \cite{Hutchinson81}.

\begin{exam}\label{cup-fs}
Let $X$ and $Y$ be the fractal squares determined by $\Fc_{\Dc_X}$ and $\Fc_{\Dc_Y}$. Here $\Dc_X=\{(i,0): i=0,1,2\}\cup\{(0,2)\}$ and $\Dc_Y=\{(i,0): i=0,1,2\}\cup\{(1,2),(2,2)\}$. See Figure \ref{fs-cup} for relative locations of the small squares $f_d([0,1]^2)$ with $d\in\Dc_X$  and $d\in\Dc_Y$.
\begin{figure}[ht]
\begin{center}
\begin{tabular}{ccc}
\begin{tikzpicture}[x=1cm,y=1cm,scale=0.618]

\fill[purple!30] (0,0) -- (5.184,0) -- (5.184,1.728) -- (0,1.728) -- (0,0);
\fill[purple!30] (0,3.456) -- (1.728,3.456) -- (1.728,5.184) -- (0,5.184) -- (0,3.456);

\foreach \i in {0,...,3}
{
\draw[gray,thick] (0,1.728*\i) -- (5.184,1.728*\i);
\draw[gray,thick] (1.728*\i,0) -- (1.728*\i,5.184);
}
\end{tikzpicture}
&
\hskip 0.5cm
&
\begin{tikzpicture}[x=1cm,y=1cm,scale=0.618]

\fill[purple!30] (0,0) -- (5.184,0) -- (5.184,1.728) -- (0,1.728) -- (0,0);
\fill[purple!30] (1.728,3.456) -- (5.184,3.456) -- (5.184,5.184) -- (1.728,5.184) -- (1.728,3.456);

\foreach \i in {0,...,3}
{
\draw[gray,thick] (0,1.728*\i) -- (5.184,1.728*\i);
\draw[gray,thick] (1.728*\i,0) -- (1.728*\i,5.184);
}
\end{tikzpicture}
\end{tabular}
\end{center}
\vskip -0.5cm
\caption{\small The small squares $f_d([0,1]^2)$ for $d\in\Dc_1$ (left part) and for $d\in\Dc_2$ (right part).}\label{fs-cup}
\end{figure}
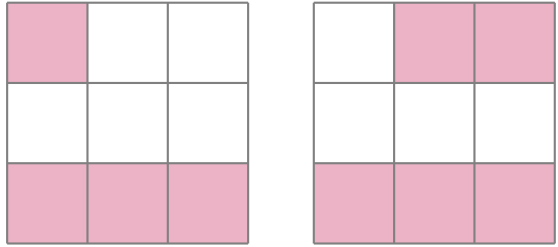
Then $X$ and $Y$ are Peano compacta,  each of which contains the interval $[0,1]$, such that $X\cup Y$ contains all the segments $[0,1]\times\{\frac{2}{3^k}\}$ for $k\ge1$. Moreover,  $X\cap Y$ is a Peano compactum having uncountably many components. All but one of these components are single points. The only non-degenerate component is the interval $[0,1]$. On the other hand, for all $k\ge1$ the horizontal strip $\bbR\times\left(\frac{1}{3^k},\frac{2}{3^k}\right)$ is disjoint from $X\cup Y$. This implies that $X\cup Y$ is not a Peano compactum. Consequently, we have
\[
\begin{array}{ccc}\lambda_{X}(x)=0 (\forall x\in \hat{\mathbb{C}}); & \lambda_{Y}(x)= 0 (\forall x\in \hat{\mathbb{C}});  & \lambda_{X\cup Y}(x)=\left\{\begin{array}{ll}1& x\in [0,1]\\ 0 & otherwise.\end{array}\right.
\end{array}
\]
Notice that $Y\cup X+1$ is also a Peano compactum, although $Y\cap X+1$ has uncountably many components. Thus  $\lambda_{X+1}(x)=\lambda_{Y}(x)=\lambda_{Y\cup X+1}(x)=0$ for all  $x\in \hat{\mathbb{C}}$. \end{exam}

\noindent
{\bf Acknowledgement}. The authors are grateful to Dr. Yi Yang at Sun Yat-sen University, for valuable discussions during the period of his phd study.

\bibliographystyle{plain}

\end{document}